\pgfplotsset{width=8.5cm,compat=1.13}
\numberwithin{equation}{section}
\theoremstyle{plain}
\newtheorem{theorem}{\sc Theorem}[section]
\newtheorem{definition}[theorem]{\sc Definition}
\newtheorem{lemma}[theorem]{\sc Lemma}
\newtheorem{proposition}[theorem]{\sc Proposition}
\theoremstyle{remark}
\newtheorem{remark}[theorem]{\sc Remark}
\newtheorem{example}[theorem]{\sc Example}
\newcommand{\one}{{{\rm 1\mkern-1.5mu}\!{\rm I}}}
\newcommand{\be}{\begin{equation}}
\newcommand{\ee}{\end{equation}}
 \def\Pesssup{\mathop{\mathbb{P}\mbox{-}\,\mathrm{ess\,sup}}}
\def\pitwo{\pi^{(x_*,h,1)}}
\def\pieL{\pi^{(x_*,h,\ell)}}
\def\pileft{\overleftarrow\pi}
\def\piright{\overrightarrow\pi}
\def\aeL{\alpha^{(x_*,h,\ell)}}
\def\aleft{\overleftarrow\alpha}
\def\aright{\overrightarrow\alpha}
\def\ep{\epsilon}
\def\epin{\epsilon^{-1}}
\def\corO{}
\def\corOO{}
\begin{document}

\title[Nonconvex homogenization for 1-D controlled RW in random potential]{Nonconvex homogenization for one-dimensional \\controlled random walks in random potential}

\author[A.\ Yilmaz]{Atilla Yilmaz}
\address{Atilla Yilmaz\\ Department of Mathematics\\ Ko\c{c} University\\Rumelifeneri Yolu, Sar\i yer, Istanbul 34450, Turkey}
\email{atillayilmaz@ku.edu.tr}
\urladdr{http://home.ku.edu.tr/$\sim$atillayilmaz}
\thanks{A.\ Yilmaz was partially 
supported by European Union FP7 Marie Curie Career Integration Grant no.\ 322078 and by the BAGEP Award of the Science Academy, Turkey.}

\author[O.\ Zeitouni]{Ofer Zeitouni}
\address{Ofer Zeitouni\\ Faculty of Mathematics\\ Weizmann Institute \\ POB 26, Rehovot 76100\\
Israel\\and
Courant Institute\\ 251 Mercer Street\\
New York, NY 10012\\ USA}
\email{ofer.zeitouni@weizmann.ac.il}
\urladdr{http://wisdom.weizmann.ac.il/$\sim$zeitouni}
\thanks{O.\ Zeitouni was partially supported by an Israel Science Foundation grant} 

\date{May 19, 2017.}

\subjclass[2010]{60K37, 93E20, 35B27.} 
\keywords{Random walk in random potential, stochastic optimal control, 
Hamilton-Jacobi, 
homogenization, corrector, large deviations, tilted free energy.}

\begin{abstract}
We consider a finite horizon stochastic optimal control problem for nearest-neighbor random walk $\{X_i\}$ on the set of integers. The cost function is the expectation of exponential of the path sum of a random stationary and ergodic bounded potential plus $\theta X_n$. The random walk policies are measurable with respect to the random potential, and are adapted, with their drifts  uniformly bounded in magnitude by a parameter $\delta\in[0,1]$. Under natural conditions on the potential, we prove that the normalized logarithm of the optimal cost function converges. The proof is constructive in the sense that we identify asymptotically optimal policies given the value of the parameter $\delta$, as well as the law of the potential. It relies on correctors from large deviation theory as opposed to arguments based on subadditivity which do not seem to work except when $\delta = 0$.

The Bellman equation associated to this control problem is a second-order Hamilton-Jacobi (HJ) stochastic partial difference equation with a separable random Hamiltonian which is nonconvex in 
$\theta$ unless $\delta = 0$. We prove that this equation homogenizes under linear initial data to a first-order HJ deterministic partial differential equation. When $\delta = 0$, the effective Hamiltonian is the tilted free energy of random walk in random potential and it is convex in $\theta$. In contrast, when $\delta = 1$, the effective Hamiltonian is piecewise linear and nonconvex in $\theta$. Finally, when $\delta \in (0,1)$, the effective Hamiltonian is expressed completely in terms of the tilted free energy for the $\delta=0$ case and its convexity/nonconvexity in $\theta$ is characterized by a simple inequality involving $\delta$ and the magnitude of the potential, thereby marking two qualitatively distinct control regimes.
\end{abstract}

\maketitle


\section{Introduction}

\subsection{Controlled random walks in random potential}\label{subsecprob}

Let $(\Omega,\mathcal{F},\mathbb{P})$ be a probability space that is equipped with an ergodic invertible measure-preserving transformation $T:\Omega\to\Omega$. Elements of $\Omega$ are denoted by $\omega$ and referred to as environments. For every $n\in\mathbb{N} = \{1,2,\ldots\}$ and $\delta\in[0,1]$, define
\begin{align*}
\mathcal{P}_n(\delta) = &\left\{\pi = (\pi_0,\pi_1,\ldots,\pi_{n-1}):\pi_i = 
\pi_i(n,\omega,y,\pm 1)\in[0,1]^2 \ 
\corO{\mbox{\rm is $\mathcal{F}$-measurable}},\right.\\
&\left. \ \pi_i(n,\omega,y,-1) + \pi_i(n,\omega,y,1) = 1\ \text{and}
\ |\pi_i(n,\omega,y,1) - \pi_i(n,\omega,y,-1)|\le\delta\ \right.\\
&\left. \ \text{for every $i\in[0,n-1]$, $\omega\in\Omega$ and $y\in\mathbb{Z}$}\right\}.
\end{align*}
Each $\pi\in\mathcal{P}_n(\delta)$ is a (Markov) random walk policy whose drift is uniformly bounded in magnitude by $\delta$. Given any environment $\omega\in\Omega$ and starting point $x\in\mathbb{Z}$, $\pi$ induces a probability measure $P_x^{\pi,\omega}$ on the space of paths $x_{0,n} = (x_0,x_1,\ldots,x_n)\in\mathbb{Z}^{n+1}$ with $x_0 = x$ and $z_{i+1} = x_{i+1} - x_{i}\in \{-1,1\}$, defined by
$$P_x^{\pi,\omega}(X_0 = x_0, X_1 = x_1,\ldots, X_n = x_n) = \prod_{i=0}^{n-1}\pi_i(n,\omega,x_i,z_{i+1}).$$
Expectation under $P_x^{\pi,\omega}$ is denoted by $E_x^{\pi,\omega}$.

Let $V:\Omega\to[0,1]$ be a nonconstant measurable function. $V(T_y\omega)$ is referred to as the potential at the point $y$ in the environment $\omega$. Here and throughout, $T_0 = I$, $T_k = T\circ T_{k-1}$ and $T_{-k} = (T_k)^{-1}$ for $k\in\mathbb{N}$. For every $n\in\mathbb{N}$, $x\in\mathbb{Z}$, $\omega\in\Omega$, $\delta\in[0,1]$, $\beta > 0$ and $\theta\in\mathbb{R}$, let
\begin{equation}\label{beforelimit}
u(n,x,\omega\,|\,\delta,\beta,\theta) = \inf_{\pi\in\mathcal{P}_n(\delta)}\log E_x^{\pi,\omega}\left[e^{\beta\sum_{i=0}^{n-1}V(T_{X_i}\omega) + \theta X_n}\right].
\end{equation}
Note that the left-hand side of \eqref{beforelimit} would not change if we took the infimum on the right-hand side over the larger set of adapted (but not necessarily Markov) random walk policies with drifts still uniformly bounded in magnitude by $\delta$. 
(See \cite[Proposition 11.7]{BerShr1978}.) 

%
%

\subsection{Overview of our results}\label{kereviziyi}

We show in Section \ref{bankerbilo} that, under natural assumptions, for $\mathbb{P}$-a.e.\ $\omega$ the limit
\begin{equation}\label{homlimit}
u_o(t,x\,|\,\delta,\beta,\theta) = \lim_{\ep\to0}\ep u\left([\ep^{-1}t],[\ep^{-1}x],\omega\,|\,\delta,\beta,\theta\right)
\end{equation}
exists for every $t > 0$ and $x\in\mathbb{R}$ (where $[\cdot]$ denotes the floor function), and it is of the form
\begin{equation}\label{simdigel}
u_o(t,x\,|\,\delta,\beta,\theta) = t\overline H_{\delta,\beta}(\theta) + \theta x.
\end{equation}
$\overline H_{\delta,\beta}(\theta) := u_o(1,0\,|\,\delta,\beta,\theta)$ is a deterministic quantity for which
we provide a formula. In fact, for $\delta>0$ we express $\overline H_{\delta,\beta}(\cdot)$ completely in terms of $\overline H_{0,\beta}(\cdot)$. The existence of the latter was already known (see Section \ref{subseczeroc}) and can be shown via subadditivity (see Appendix \ref{app_subadd}). However, there is no subadditivity to be exploited when $\delta>0$, so instead we develop a constructive approach. In particular, in Section \ref{asop} we identify asymptotically optimal policies (as $n\to\infty$) for the control problem in \eqref{beforelimit}.

We make two observations. First, the Bellman equation associated to the control problem in \eqref{beforelimit} is a second-order Hamilton-Jacobi (HJ) stochastic partial difference equation (see \eqref{oriHJ2}). Second, the function $u_o(t,x) = u_o(t,x\,|\,\delta,\beta,\theta)$ (given in \eqref{simdigel}) satisfies the following first-order HJ deterministic partial differential equation:
$$\frac{\partial u_o}{\partial t}(t,x) = \overline H_{\delta,\beta}\left(\frac{\partial u_o}{\partial x}(t,x)\right).$$
Due to the limit in \eqref{homlimit} under an appropriate scaling of time and space, the former equation (with linear initial data) is said to homogenize to the latter one. See Section \ref{homressec} for details and also Section \ref{syurdubey} for related results from the homogenization literature. Therefore, throughout the paper, $\overline H_{\delta,\beta}(\theta)$ will be referred to as the effective Hamiltonian.


%
%

\subsection{Assumptions on the potential}\label{subsecass}

Since the potential inside the expectation on the right-hand side of \eqref{beforelimit} is scaled by $\beta$, there is no loss of generality in assuming that
\begin{equation}\label{ass_wlog}
\text{the essential infimum (resp.\ supremum) of $V(\omega)$ under $\mathbb{P}$ is $0$ (resp.\ $1$).}
\end{equation}
Our results will further require the existence of arbitrarily long finite intervals where the potential is uniformly close to its essential infimum (resp.\ supremum). In order to make this condition precise, we introduce two terms.
\begin{definition}\label{pandef}
	For any $\omega\in\Omega$ and $h\in(0,1)$, an interval $[k,\ell]\subset\mathbb{Z}$ is said to be an $h$-valley (resp.\ $h$-hill) if $V(T_y\omega) \le h$ (resp.\ $V(T_y\omega) \ge h$) for every $y\in[k,\ell]$.
\end{definition}
With this terminology, we will assume that
\begin{equation}\label{ass_pan}
\mathbb{P}(\text{$[0,\ell]$ is an $h$-valley}) > 0\quad\text{and}\quad\mathbb{P}(\text{$[0,\ell]$ is an $h$-hill}) > 0\quad\text{for every $h\in(0,1)$ and $\ell\in\mathbb{N}$.}
\end{equation}
Note that this assumption does not imply \corO{that the environment 
is mixing},
\corO{as Example \ref{ex-nonmix} below shows.}

\begin{example}
  \label{ex-first}
	Let $\Omega = [0,1]^\mathbb{Z}$ and $\mathcal{F}$ the Borel $\sigma$-algebra on $\Omega$. Define $T:\Omega\to\Omega$ by $(T\omega)_y = \omega_{y+1}$ for any $\omega = (\omega_y)_{y\in\mathbb{Z}}\in\Omega$. Assume that
	\begin{itemize}
		\item [(i)] $\mathbb{P}$ is a probability measure on $(\Omega,\mathcal{F})$ that is stationary and ergodic under $T$, and
		\item [(ii)] there exists a Borel probability measure $\mu$ on $[0,1]$ such that the product measure $\prod_{y\in\mathbb{Z}}\mu$ is absolutely continuous with respect to $\mathbb{P}$ on $\mathcal{F}_{0,\ell} = \sigma\{\omega_0,\ldots,\omega_\ell\}$ for every $\ell\in\mathbb{N}$.
	\end{itemize}
	Consider the function $V:\Omega\to[0,1]$ given by $V(\omega) = \omega_0$. Then, \eqref{ass_wlog} is equivalent to $\mu$ having full support, in which case \eqref{ass_pan} holds by the assumption of absolute continuity.
\end{example}
\begin{example}
  \label{ex-nonmix}
  \corO{
With $\Omega$, $\mathcal{F}$, $T$ and $V$ as 
in Example \ref{ex-first},
let $\{s\}\cup\{\alpha_k:\,k\in\mathbb{Z}\}$ be an i.i.d.\ collection of ($\{0,1\}$-valued) Bernoulli trials with success probability $1/2$. Define $\omega = (\omega_y)_{y\in\mathbb{Z}}$ by setting
$$\omega_{s+2k-1} = \omega_{s+2k} = \alpha_k$$
for every $k\in\mathbb{Z}$. This induces a probability measure $\mathbb{P}$ on $(\Omega,\mathcal{F})$. It is clear that $\mathbb{P}$ is stationary and ergodic under $T$. Moreover, \eqref{ass_wlog} and \eqref{ass_pan} trivially 
hold. However, $\omega$ is not even weakly mixing under $\mathbb{P}$,
since an elementary computation shows that with $A=\{\omega_{-1}=\omega_0\}$
one has that $\mathbb{P}(A\cap T^{-2k} A)=5/8$ for all $k\neq 0$ while 
$(\mathbb{P}(A))^2=9/16$.}
\end{example}

\subsection{Special case: No control}\label{subseczeroc}

If $\delta = 0$, then $\mathcal{P}_n(\delta)$ is a singleton whose unique element satisfies $\pi_i(n,\omega,y,\pm1) \equiv 1/2$ and induces simple symmetric random walk (SSRW) on $\mathbb{Z}$. In this case, we simplify the notation and write $P_x$ (resp.\ $E_x$) instead of $P_x^{\pi,\omega}$ (resp.\ $E_x^{\pi,\omega}$).

\begin{theorem}[No control]\label{thmno}
Assume \eqref{ass_wlog} and \eqref{ass_pan}. If $\delta = 0$, $\beta > 0$ and $\theta\in\mathbb{R}$, then for $\mathbb{P}$-a.e.\ $\omega$ the limit in \eqref{homlimit} exists for every $t>0$ and $x\in\mathbb{R}$. Moreover, \eqref{simdigel} holds and the effective Hamiltonian is given by
\begin{equation}\label{nolimit}
\overline H_{0,\beta}(\theta) = \Lambda_\beta(\theta) := \lim_{n\to\infty}\frac1{n}\log E_0\left[e^{\beta\sum_{i=0}^{n-1}V(T_{X_i}\omega) + \theta X_n}\right],
\end{equation}
the so-called tilted free energy.
\end{theorem}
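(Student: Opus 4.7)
When $\delta=0$ the set $\mathcal{P}_n(0)$ reduces to the singleton SSRW policy and
\[
u(n,x,\omega\,|\,0,\beta,\theta)=\log E_x\bigl[e^{\beta\sum_{i=0}^{n-1}V(T_{X_i}\omega)+\theta X_n}\bigr].
\]
First I would use the spatial homogeneity of SSRW: writing $X_i=x+Y_i$ with $Y$ an SSRW from $0$ and noting that $V(T_{x+Y_i}\omega)=V(T_{Y_i}(T_x\omega))$ produces the translation identity
\[
u(n,x,\omega\,|\,0,\beta,\theta)=\theta x+F_n(T_x\omega,\theta),\qquad F_n(\omega,\theta):=\log E_0\bigl[e^{\beta\sum_{i=0}^{n-1}V(T_{X_i}\omega)+\theta X_n}\bigr].
\]
Inserted in \eqref{homlimit} this gives $\epsilon\theta[\epsilon^{-1}x]\to\theta x$ deterministically, so setting $\overline H_{0,\beta}(\theta):=\Lambda_\beta(\theta)$ reduces the theorem (both \eqref{simdigel} and the identification \eqref{nolimit}) to
\begin{equation}\label{redneeded}
\epsilon F_{[\epsilon^{-1}t]}\bigl(T_{[\epsilon^{-1}x]}\omega,\theta\bigr)\xrightarrow[\epsilon\to 0]{}t\Lambda_\beta(\theta)\quad\text{$\mathbb{P}$-a.s., simultaneously for every $t>0$, $x\in\mathbb{R}$.}
\end{equation}

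Next, the deterministic limit $\Lambda_\beta(\theta)=\lim_n F_n(\omega,\theta)/n$ is the standard tilted free energy. Conditioning SSRW at time $n$ via the Markov property and reusing the translation identity yields
\[
F_{n+m}(\omega,\theta)=\log E_0\bigl[e^{\beta\sum_{i=0}^{n-1}V(T_{X_i}\omega)+\theta X_n+F_m(T_{X_n}\omega,\theta)}\bigr],
\]
which, combined with the deterministic envelope $\abs{F_n/n}\le\beta+\abs\theta$ (from $V\in[0,1]$ and $\abs{X_n}\le n$) and the sandwiching of $F_m(T_{X_n}\omega,\theta)$ between $\min_{\abs y\le n}F_m(T_y\omega,\theta)$ and $\max_{\abs y\le n}F_m(T_y\omega,\theta)$, provides the (almost-)subadditive structure to which Kingman's subadditive ergodic theorem applies. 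Ergodicity of $T$ forces the limit to be deterministic. This is the content of Appendix~A. Since the exceptional null set is $T$-invariant, on a single full-measure set $F_n(T_m\omega,\theta)/n\to\Lambda_\beta(\theta)$ for every $m\in\mathbb{Z}$ simultaneously.

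Finally I would transfer this pointwise-in-$m$ convergence to the $\epsilon$-dependent shift $m_\epsilon=[\epsilon^{-1}x]$, whose magnitude is comparable to the time horizon $n_\epsilon=[\epsilon^{-1}t]$ since $m_\epsilon/n_\epsilon\to x/t$. The SSRW range in $n_\epsilon$ steps is $O(\sqrt{n_\epsilon})=o(n_\epsilon)$, so $F_{n_\epsilon}(T_{m_\epsilon}\omega,\theta)$ is determined, up to negligible corrections, by the restriction of $\omega$ to a window of size $o(n_\epsilon)$ around $m_\epsilon$; by Birkhoff the empirical statistics of $V$ in any such growing window converge $\mathbb{P}$-a.s.\ to the true law regardless of base point, and the $h$-valley/$h$-hill hypothesis \eqref{ass_pan} excludes anomalous local configurations that could otherwise spoil the sandwich bounds. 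A Borel--Cantelli argument along a dyadic subsequence $\epsilon_k=2^{-k}$, combined with the uniform estimate $\abs{F_{n+1}(\omega,\theta)-F_n(\omega,\theta)}\le\beta+\abs\theta$ used for interpolation in $t$, then delivers \eqref{redneeded} on a single full-measure set of $\omega$'s valid for every $(t,x)$.

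The main obstacle is precisely this final step: Kingman alone gives only convergence at each \emph{fixed} integer shift, whereas \eqref{redneeded} demands convergence along $\epsilon$-dependent shifts $m_\epsilon\asymp n_\epsilon$. The uniformity is the technical core beyond the known existence of $\Lambda_\beta$ and must be extracted from the localization of the SSRW range together with the ergodicity of $\mathbb{P}$.
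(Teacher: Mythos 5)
Your proposal takes a genuinely different route from the paper: you rely on the translation identity, Kingman's subadditive theorem (via the Appendix~\ref{app_subadd} argument) and an attempted localization of the walk, whereas the paper constructs a corrector $F_{\beta,\theta}^\lambda$ satisfying the peeling identity \eqref{peeling}, shows via Lemmas~\ref{decomp} and \ref{lemincrease} that $\lambda$ can be chosen to make $F_{\beta,\theta}^\lambda$ a bounded centered cocycle, and then invokes the sublinearity of cocycle path sums (Lemma~\ref{cencoclem}) together with repeated peeling. That lemma is stated precisely to give uniformity over all starting points $[nx]$ with $|x|\le B$ and all nearest-neighbor paths, which is exactly the uniformity you flag as the ``main obstacle.''

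You correctly identify that obstacle, but your resolution of it contains a genuine gap. The claim that the SSRW range in $n_\epsilon$ steps is $O(\sqrt{n_\epsilon})=o(n_\epsilon)$, so that $F_{n_\epsilon}(T_{m_\epsilon}\omega,\theta)$ depends only on a window of size $o(n_\epsilon)$, is false once $\theta\ne 0$: the tilt $e^{\theta X_n}$ makes the dominant paths ballistic, with $|X_n|$ of order $n$, so $F_n(T_m\omega,\theta)$ genuinely depends on $\omega$ over an interval of length comparable to $n$ around $m$. Even for $\theta=0$ the exponential weight $e^{\beta\sum V}$ means that paths with $|X_n|$ of order $n$ contribute at the relevant $e^{O(n)}$ scale, so no $o(n)$ localization is available. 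The subsequent appeals to Birkhoff ``regardless of base point'' and to a Borel--Cantelli argument are assertions rather than arguments: Birkhoff gives convergence of ergodic averages from a fixed base and does not by itself give uniformity over $O(\epsilon^{-1})$ shifts, and no probability estimate is exhibited to make Borel--Cantelli applicable. These are not cosmetic omissions — they are precisely the content that the paper's corrector/sublinear-cocycle machinery is designed to supply, and your sketch leaves the theorem unproved at exactly that point.
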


The existence of the tilted free energy was shown in several previous works in much greater generality. 
Zerner \cite{Zer1998} considered nearest-neighbor random walks (RWs) in i.i.d.\ random potential on $\mathbb{Z}^d$ (with any $d\ge1$) and gave a subadditivity argument that proves the existence of certain Lyapunov exponents which in turn imply a large deviation principle (LDP) for the position of the walk. Then, Flury \cite{Flu2007} used Zerner's large deviation result to show the existence of the tilted free energy in the same setting. These two papers built upon earlier work by Sznitman \cite{Szn1994} on Brownian motion in a Poissonian potential on $\mathbb{R}^d$. By another subadditivity argument, Varadhan \cite{Var2003} bypassed Lyapunov exponents and directly established a similar LDP for a closely related model, namely nearest-neighbor RW in stationary and ergodic (not necessarily i.i.d.) random environment on $\mathbb{Z}^d$. It is easy to adapt Varadhan's argument to give a short proof of the existence of the tilted free energy for RW in random potential on $\mathbb{Z}^d$. We do this \corO{in a more general setup
  in Theorem \ref{pargoz} of} 
Appendix \ref{app_subadd} for the sake of completeness 
and with future use in mind. 
There are alternative proofs of Theorem \ref{pargoz} which provide variational formulas for the tilted free energy \cite{Yil2009,RasSepYil2013,RasSep2014,RasSepYil2017}. See Remark \ref{furrefvar} for details.

In Section \ref{zerocontrol}, we will take advantage of our one-dimensional setting to present a self-contained proof of Theorem \ref{thmno} (which is not based on subadditivity) and give an implicit (non-variational) formula 
for the tilted free energy $\Lambda_\beta(\theta)$. 
We will also show some properties of $\Lambda_\beta(\theta)$ as a function of $\beta$ and $\theta$ (see Proposition \ref{temelsaf}). In particular, if $\delta = 0$, then the effective Hamiltonian $\overline H_{0,\beta}(\theta) = \Lambda_\beta(\theta)$ is convex in $\theta$ for every $\beta>0$.


%
%

\section{Results}\label{buyukressec}

\subsection{The effective Hamiltonian}\label{bankerbilo}

As we present below, for $\mathbb{P}$-a.e.\ $\omega$ the limit in \eqref{homlimit} exists for every $t>0$ and $x\in\mathbb{R}$ under the assumptions \eqref{ass_wlog} and \eqref{ass_pan}. Recall from Section \ref{subseczeroc} that the special case of no control (i.e., $\delta = 0$) is studied in detail in Section \ref{zerocontrol}. 
The other extreme case is $\delta =1$, i.e., when we can fully control the trajectory of the particle performing the walk. The analysis of the latter case involves the same approach as the intermediate case $\delta\in(0,1)$ but it is technically simpler, so we present it first.

\begin{theorem}[Full control]\label{thmfull}
Assume \eqref{ass_wlog} and \eqref{ass_pan}. If $\delta = 1$, $\beta > 0$ and $\theta\in\mathbb{R}$, then for $\mathbb{P}$-a.e.\ $\omega$ the limit in \eqref{homlimit} exists for every $t>0$ and $x\in\mathbb{R}$. Moreover, \eqref{simdigel} holds and the effective Hamiltonian is given by
\begin{equation}\label{fulllimit}
\overline H_{1,\beta}(\theta) = \begin{cases}
0&\ \text{if}\ |\theta| < \beta\mathbb{E}[V(\cdot)],\\
\beta\mathbb{E}[V(\cdot)] - |\theta|&\ \text{if}\ |\theta| \ge \beta\mathbb{E}[V(\cdot)].
\end{cases}
\end{equation}
\end{theorem}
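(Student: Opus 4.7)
The starting observation is that when $\delta=1$ the constraint $|\pi_i(n,\omega,y,1)-\pi_i(n,\omega,y,-1)|\le 1$ is vacuous, so $\mathcal{P}_n(1)$ consists of every (time-inhomogeneous) Markov policy; moreover the infimum in \eqref{beforelimit} is attained at a deterministic policy following the cost-minimizing path, since $E^{\pi,\omega}$ of an exponential is $\ge$ its pointwise minimum with equality when $\pi$ concentrates on the minimizing path. Hence
\[
u(n,0,\omega\,|\,1,\beta,\theta)=\min_{(x_0,\dots,x_n):\,x_0=0,\,|x_{i+1}-x_i|=1}\Bigl[\beta\sum_{i=0}^{n-1}V(T_{x_i}\omega)+\theta x_n\Bigr].
\]
Combined with the identity $u(n,x,\omega\,|\,1,\beta,\theta)=u(n,0,T_x\omega\,|\,1,\beta,\theta)+\theta x$ and the $T$-invariance of $\mathbb{P}$, this reduces \eqref{homlimit}, via a countable intersection over $x\in\mathbb{Z}$, to proving $n^{-1}u(n,0,\omega\,|\,1,\beta,\theta)\to\overline H_{1,\beta}(\theta)$ for $\mathbb{P}$-a.e.\ $\omega$.

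For the upper bound I would exhibit two explicit strategies. The \emph{stay-in-valley} strategy uses \eqref{ass_pan} and ergodicity to locate, for each $h\in(0,1)$, an $h$-valley of length $\ge 1$ within some $\mathbb{P}$-a.s.\ finite random distance $D(\omega,h)$ of the origin; moving there in $D$ steps and then oscillating between two adjacent valley sites for the remaining $n-D$ steps gives cost $\le\beta D+\beta h(n-D)+|\theta|(D+1)$, so $\limsup_n n^{-1}u(n,0,\omega\,|\,1,\beta,\theta)\le\beta h$, and letting $h\downarrow 0$ yields $\overline H_{1,\beta}(\theta)\le 0$. The \emph{drift} strategy, for $\theta>0$, takes $n$ consecutive leftward steps: Birkhoff's ergodic theorem applied to $\sum_{i=0}^{n-1}V(T_{-i}\omega)$ gives cost $=\beta n\,\mathbb{E}[V(\cdot)]-\theta n+o(n)$, hence $\overline H_{1,\beta}(\theta)\le\beta\,\mathbb{E}[V(\cdot)]-\theta$. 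Together with the symmetric estimate for $\theta<0$, this yields $\overline H_{1,\beta}(\theta)\le\min(0,\beta\,\mathbb{E}[V(\cdot)]-|\theta|)$.

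For the matching lower bound I would use a range-and-visit-count argument. The range of any path with $x_0=0$ and $x_n=m$ is a connected interval $[m_-,m_+]$ containing $0$ and $m$, so $m_+-m_-\ge|m|$, and every site in the range with the possible exception of $x_n$ itself is visited at some time in $\{0,\dots,n-1\}$, giving $\sum_{i=0}^{n-1}V(T_{x_i}\omega)\ge\sum_{y=m_-}^{m_+}V(T_y\omega)-1$. The crucial ingredient is the uniform Birkhoff estimate that $\mathbb{P}$-a.s., for every $\epsilon>0$ there exists $N_\epsilon(\omega)<\infty$ such that for all $n\ge N_\epsilon$ and every interval $[a,b]\subseteq[-n,n]$ with $b-a+1\ge\epsilon n$,
\[
\sum_{y=a}^{b}V(T_y\omega)\ge(b-a+1)(\mathbb{E}[V(\cdot)]-\epsilon);
\]
this is obtained by writing the sum as a difference $G(b+1)-G(a)$ of one-sided partial sums $G(k):=\sum_{y=0}^{k-1}V(T_y\omega)$ (suitably extended to $k<0$) and using $b-a+1\ge\epsilon n$ to absorb the $o(n)$ Birkhoff errors on each side. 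Feeding this into the range inequality, splitting according to whether $|m|<\epsilon n$ (where $\sum V\ge 0$ and $|\theta m|\le|\theta|\epsilon n$ suffice) or $|m|\ge\epsilon n$ (where the ergodic bound yields $\sum V\ge|m|(\mathbb{E}[V(\cdot)]-\epsilon)-O(1)$), and then optimizing over $\alpha=m/n\in[-1,1]$, gives
\[
\liminf_{n\to\infty}n^{-1}u(n,0,\omega\,|\,1,\beta,\theta)\ge\min_{\alpha\in[-1,1]}\bigl[\beta|\alpha|(\mathbb{E}[V(\cdot)]-\epsilon)+\theta\alpha\bigr]-|\theta|\epsilon,
\]
and sending $\epsilon\downarrow 0$ recovers exactly $\min(0,\beta\,\mathbb{E}[V(\cdot)]-|\theta|)$. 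I expect the main technical hurdle to be the uniform ergodic estimate above---making the $o(n)$ correction in Birkhoff uniform over all sufficiently long subintervals of $[-n,n]$---but the difference-of-partial-sums approach sketched here handles it straightforwardly.
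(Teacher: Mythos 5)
Your approach is correct in its essential ideas but takes a genuinely different route for the lower bound. Both proofs use the same two strategies for the upper bound (stay in an $h$-valley, and drift unidirectionally). For the lower bound, the paper introduces the corrector $G_\beta$ (the centered cocycle built from $V - \mathbb{E}[V]$), establishes the pointwise inequality of Lemma \ref{cokgenold}, and then iterates it through the expectation using the sublinearity Lemma \ref{cencoclem}, finishing with the $\beta$-rescaling trick \eqref{anlamyap} to handle $0<\theta<\beta\mathbb{E}[V(\cdot)]$. You instead exploit the deterministic-path reduction — available only at $\delta=1$, where the control is powerful enough to force any path, so $u(n,\cdot,\omega)$ becomes a pathwise minimum — and then lower-bound the cost of every path by the sum of $V$ over its range, controlling all such range-sums simultaneously via a uniform-over-subintervals Birkhoff estimate. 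This is more elementary and self-contained for $\delta=1$, and it also avoids the $\beta$-rescaling step because optimizing over $\alpha = m/n \in [-1,1]$ handles both regimes $|\theta|\lessgtr\beta\mathbb{E}[V(\cdot)]$ at once. The trade-off is that the paper's corrector argument is deliberately parallel to the one used for $\delta\in(0,1)$, where no deterministic-path reduction exists; your uniform Birkhoff estimate is essentially Lemma \ref{cencoclem} specialized to the cocycle $V-\mathbb{E}[V]$, reproved via differences of one-sided partial sums.

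One imprecision should be flagged. The claim that a countable intersection over $x\in\mathbb{Z}$ reduces \eqref{homlimit} to proving $n^{-1}u(n,0,\omega)\to\overline H_{1,\beta}(\theta)$ is not correct as stated: in \eqref{homlimit} the starting point $[\ep^{-1}x]$ grows proportionally to $[\ep^{-1}t]$, so one needs convergence that is uniform over shifts growing linearly with the time horizon, which a countable intersection over fixed integer shifts does not provide. The fix is already contained in your own argument — the uniform Birkhoff estimate just needs to be stated and applied over the window $[-(1+|x|/t)n,(1+|x|/t)n]$ rather than $[-n,n]$; this is precisely how the paper's Lemma \ref{gayettehos} and Lemma \ref{cencoclem} build in the uniformity over $x\in[-B,B]$.
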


When $\delta\in(0,1)$, we can only partially control the trajectory of the particle.
In order to give a tidy formula for $\overline H_{\delta,\beta}(\theta)$, we introduce the parameter
\begin{equation}\label{parasi}
c = \frac1{2}\log\left(\frac{1 + \delta}{1 - \delta}\right). 
\end{equation}
The comparison of $\beta$ and $\log\cosh(c)$ 
(or equivalently of $\sqrt{1 - e^{-2\beta}}$ and $\delta$) turns out to play a critical role, giving rise to two qualitatively distinct regimes to which we will refer below as weak control and strong control.

\begin{theorem}[Weak control]\label{thmweak}
Assume \eqref{ass_wlog} and \eqref{ass_pan}. If $\delta\in(0,1)$, $\beta \ge \log\cosh(c)$ and $\theta\in\mathbb{R}$, then for $\mathbb{P}$-a.e.\ $\omega$ the limit in \eqref{homlimit} exists for every $t>0$ and $x\in\mathbb{R}$. Moreover, \eqref{simdigel} holds and the effective Hamiltonian is given by
\begin{equation}\label{weaklimit}
\overline H_{\delta,\beta}(\theta) = \begin{cases}
\beta - \log\cosh(c)&\ \text{if}\ |\theta| < c,\\
\Lambda_\beta(|\theta| - c) - \log\cosh(c)&\ \text{if}\ |\theta| \ge c.
\end{cases}
\end{equation}
\end{theorem}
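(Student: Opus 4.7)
My plan is to establish the two-sided identity
\[
\overline H_{\delta,\beta}(\theta) = \inf_{|\lambda|\le c}\!\left[\Lambda_\beta(\theta+\lambda) - \log\cosh\lambda\right]
\]
and then argue that the infimum reduces to the stated piecewise formula via elementary analysis of $\Lambda_\beta$ under the weak-control hypothesis $\beta\ge\log\cosh c$. The key reparametrization is $\pi_i(n,\omega,y,\pm 1)=e^{\pm\lambda_i(n,\omega,y)}/(2\cosh\lambda_i(n,\omega,y))$ with $|\lambda_i|\le c$; the drift constraint $|\pi_+-\pi_-|\le\delta$ translates to $|\tanh\lambda_i|\le\tanh c=\delta$ via \eqref{parasi}.

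\textbf{Upper bound.} For the constant Markov policy $\pi^{(\lambda)}$ with $\lambda_i\equiv\lambda\in[-c,c]$, a Girsanov-type change of measure against SSRW (using $dP_0^{\pi^{(\lambda)}}/dP_0 = e^{\lambda X_n}/\cosh^n\lambda$) yields
\[
\frac{1}{n}\log E_0^{\pi^{(\lambda)},\omega}\!\left[e^{\beta\sum_{i=0}^{n-1} V(T_{X_i}\omega) + \theta X_n}\right] = -\log\cosh\lambda + \frac{1}{n}\log E_0\!\left[e^{\beta\sum V+(\theta+\lambda)X_n}\right],
\]
which converges $\mathbb{P}$-a.s.\ to $\Lambda_\beta(\theta+\lambda)-\log\cosh\lambda$ by Theorem~\ref{thmno}. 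Since $\pi^{(\lambda)}\in\mathcal{P}_n(\delta)$ is feasible, this gives the bound $\overline H_{\delta,\beta}(\theta)\le\inf_{|\lambda|\le c}[\Lambda_\beta(\theta+\lambda)-\log\cosh\lambda]$.

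\textbf{Matching lower bound via a corrector.} Because subadditivity fails for $\delta>0$ (as highlighted in Section~\ref{kereviziyi}), the matching lower bound I would obtain through a sublinear corrector $G:\mathbb{Z}\times\Omega\to\mathbb{R}$ satisfying the discrete stationary Bellman inequality
\[
\overline H_{\delta,\beta}(\theta) + G(x,\omega)\;\le\;\beta V(T_x\omega) + \inf_{|\lambda|\le c}\!\Bigl[\log\!\bigl(e^{G(x-1,\omega)-\theta-\lambda} + e^{G(x+1,\omega)+\theta+\lambda}\bigr) - \log(2\cosh\lambda)\Bigr].
\]
Iterating the original Bellman equation for $u$ would then propagate $u(n,x,\omega)\ge n\overline H_{\delta,\beta}(\theta)+\theta x+G(x,\omega)-o(n)$, which after dividing by $n$ is the desired lower bound. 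My candidate is $G(x,\omega)=K(x,\omega)$, where $K$ is a quenched corrector for the uncontrolled problem at the shifted tilt $\theta+\lambda^\ast$ with $\lambda^\ast=-c\,\mathrm{sgn}(\theta)$, whose existence follows from one of the variational formulas of \cite{Yil2009,RasSepYil2013,RasSepYil2017}. A direct computation shows that, evaluating the controlled cell problem at $\lambda=\lambda^\ast$ and using the RWRP cell problem satisfied by $K$, the right-hand side equals $K(x,\omega)+\Lambda_\beta(\theta+\lambda^\ast)-\log\cosh\lambda^\ast$, matching the required $\overline H_{\delta,\beta}(\theta)+G(x,\omega)$ provided $\lambda^\ast$ is indeed the minimizer.

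\textbf{Reduction of the variational formula, and main obstacle.} With $\overline H_{\delta,\beta}(\theta)=\inf_{|\lambda|\le c}[\Lambda_\beta(\theta+\lambda)-\log\cosh\lambda]$ in hand, the reduction to the piecewise formula uses the convexity and evenness of $\Lambda_\beta$ (Proposition~\ref{temelsaf}), the identity $\Lambda_\beta(0)=\beta$ (a consequence of \eqref{ass_pan} via the standard stay-in-a-long-hill argument), and an auxiliary flatness lemma $\Lambda_\beta\equiv\beta$ on $[-c,c]$, which is exactly the content of the weak-control hypothesis $\beta\ge\log\cosh c$ and collapses the formula to $\beta-\log\cosh c$ for $|\theta|<c$; for $|\theta|\ge c$, evaluation at $\lambda^\ast=-c\,\mathrm{sgn}(\theta)$ combined with evenness gives $\Lambda_\beta(|\theta|-c)-\log\cosh c$. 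The chief technical obstacle is verifying the sub/super-solution property of the corrector when the Hamiltonian is non-convex in $\theta$ (cf.\ Theorem~\ref{thmfull}): the minimizing $\lambda$ in the cell problem depends, through the derivative identity $\tanh((K(x+1,\omega)-K(x-1,\omega))/2+\theta+\lambda)=\tanh\lambda$, on the local sign of $K(x+1,\omega)-K(x-1,\omega)+2\theta$, and the weak-control hypothesis is exactly what keeps this sign pinned to $\mathrm{sgn}(\theta)$ uniformly in $x$ and $\omega$, so that the RWRP corrector $K$ can be used off the shelf; outside the weak regime this sign can flip and a genuinely different construction is needed, which is why the paper separates Theorem~\ref{thmweak} from the strong-control case.
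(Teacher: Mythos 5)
Your proposed two-sided identity
\[
\overline H_{\delta,\beta}(\theta) \;=\; \inf_{|\lambda|\le c}\bigl[\Lambda_\beta(\theta+\lambda) - \log\cosh\lambda\bigr]
\]
is in general \emph{false} on $|\theta|<c$, and the error is compounded by the claimed "flatness lemma $\Lambda_\beta\equiv\beta$ on $[-c,c]$." Weak control means $\beta\ge\log\cosh c$; by Proposition~\ref{temelsaf}(b), $\Lambda_\beta(c)\ge\beta\mathbb{E}[V(\cdot)]+\log\cosh c$, so $\Lambda_\beta(c)>\beta$ whenever $\log\cosh c>\beta(1-\mathbb{E}[V(\cdot)])$, and since $\mathbb{E}[V(\cdot)]\in(0,1)$, the interval $\log\cosh c\le\beta<\frac{\log\cosh c}{1-\mathbb{E}[V(\cdot)]}$ is nonempty. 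For such parameters, $\Lambda_\beta(\lambda)\ge\beta$ and $-\log\cosh\lambda\ge-\log\cosh c$ force $\Lambda_\beta(\lambda)-\log\cosh\lambda\ge\beta-\log\cosh c$ with \emph{strict} inequality for every $|\lambda|\le c$ (equality would require simultaneously $\Lambda_\beta(\lambda)=\beta$ and $|\lambda|=c$), so your right-hand side is strictly above the theorem's value $\beta-\log\cosh c$ at, e.g., $\theta=0$. The culprit is that your upper bound uses only spatially constant tilt policies $\pi^{(\lambda)}$, which are genuinely suboptimal when $|\theta|<c$. The paper instead uses the valley-trapping bang-bang policy $\pieL$ of \eqref{pengol}, which tilts toward a random $h$-valley and yields an upper bound governed by the exponential moment of the number of complete left excursions of a reflected walk (Section~\ref{cinsisimhay} and Proposition~\ref{countexcursion}); this is what produces the factor $[\beta-\log\cosh c]^+$ in \eqref{UBcece}--\eqref{UBpart2}, and it has no analogue in any constant-tilt analysis.

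The corrector half of your argument has a matching problem. As you note, minimizing $\log(e^{a-\lambda}+e^{b+\lambda})-\log(2\cosh\lambda)$ over $\lambda\in[-c,c]$ pushes $\lambda$ to $-c\cdot\mathrm{sgn}(b-a)$, i.e.\ to $-c\cdot\mathrm{sgn}\bigl(K(x+1,\omega)-K(x-1,\omega)+2\theta\bigr)$. For $|\theta|>c$ the bounds \eqref{fboundsrec} on the RWRP corrector $F_{\beta,\theta-c}$ do pin this sign to $\mathrm{sgn}(\theta)$ uniformly in $x,\omega$, which is exactly the computation that underlies \eqref{budaynim} and the lower bound \eqref{LBpart2}; your outline recovers this piece. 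But for $|\theta|<c$ the sign is \emph{not} pinned under weak control alone (the paper only obtains it under the strong-control condition $\Lambda_\beta(\theta-c)\le\log\cosh c$ in Section~\ref{bundanibaret}), and indeed it \emph{cannot} be, since a uniform corrector-type lower bound of $\Lambda_\beta(\theta-c)-\log\cosh c$ would contradict the correct value $\beta-\log\cosh c$ whenever $\Lambda_\beta(\theta-c)>\beta$. The paper's lower bound for $|\theta|\le c$ therefore does not use a corrector at all; it uses the Azuma--Hoeffding martingale estimate in Section~\ref{martingale}, which exploits that the combined tilt $\sum(\theta+\alpha_i)Z_{i+1}$ is a martingale under $P_{[\epin x]}$, regardless of the policy, and confines the walk to an $h$-hill. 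Both the valley-trapping upper bound and the martingale lower bound on $|\theta|\le c$ are essential and absent from your proposal.
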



\begin{theorem}[Strong control]\label{thmstrong}
Assume \eqref{ass_wlog} and \eqref{ass_pan}. If $\delta\in(0,1)$, $\beta < \log\cosh(c)$ and $\theta\in\mathbb{R}$, then for $\mathbb{P}$-a.e.\ $\omega$ the limit in \eqref{homlimit} exists for every $t>0$ and $x\in\mathbb{R}$. Moreover, \eqref{simdigel} holds, there exists a unique $\bar\theta(\beta,c)\in(0,c)$ such that
$$\Lambda_\beta(\bar\theta(\beta,c)-c) = \log\cosh(c),$$
and the effective Hamiltonian is given by
\begin{equation}\label{stronglimit}
\overline H_{\delta,\beta}(\theta) = \begin{cases}
0&\ \text{if}\ |\theta| < |\bar\theta(\beta,c)|,\\
\Lambda_{\beta}(|\theta| - c) - \log\cosh(c)&\ \text{if}\ |\theta| \ge |\bar\theta(\beta,c)|.
\end{cases}
\end{equation}
\end{theorem}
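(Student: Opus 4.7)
The plan is to follow the scheme used for the weak-control case (Theorem~\ref{thmweak}), adapted so as to produce the flat piece of $\overline H_{\delta,\beta}$ at height $0$. First I would apply the Girsanov-type change of measure that reparameterises the bounded-drift constraint via $\psi_i(y) := \tanh^{-1}(\pi_i(n,\omega,y,+1)-\pi_i(n,\omega,y,-1))\in[-c,c]$, rewriting
\[
E_x^{\pi,\omega}\!\left[e^{\beta\sum_{i=0}^{n-1}V(T_{X_i}\omega)+\theta X_n}\right]
= e^{\theta x}\,E_x^0\!\left[\exp\sum_{i=0}^{n-1}\!\bigl(\beta V(T_{X_i}\omega)+(\psi_i(X_i)+\theta)Z_{i+1}-\log\cosh\psi_i(X_i)\bigr)\right],
\]
where $E_x^0$ denotes SSRW started at $x$. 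The control problem then reduces to optimising this tilted SSRW expectation over adapted $\psi$ taking values in $[-c,c]$.

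For the upper bound I would use two different constructive strategies. For $|\theta|\ge\bar\theta(\beta,c)$, take the constant drift $\psi_i\equiv-\operatorname{sgn}(\theta)c$ as in the proof of Theorem~\ref{thmweak}. After simplification the right-hand side above becomes $e^{\theta x-n\log\cosh c}\,E_x^0[e^{\beta\sum V+(\theta-\operatorname{sgn}(\theta)c)(X_n-x)}]$, and Theorem~\ref{thmno} together with the symmetry of $\Lambda_\beta$ yields $\limsup\tfrac1n\log E_x^{\pi,\omega}\le\Lambda_\beta(|\theta|-c)-\log\cosh c$. For $|\theta|<\bar\theta$ this constant-drift choice over-pays, so I would instead exploit \eqref{ass_pan}: for any $h\in(0,1)$ and $\ell\in\bN$, locate an $h$-valley $[k,k+\ell]$ close to $x$ and design a policy that first transits to that valley (at cost $O(\ell)$) and then applies maximal inward drift. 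The confined walker is a positive-recurrent birth-death chain whose escape probability over $n$ steps is at most $ne^{-c\ell}$, so choosing $\ell\gg\log n$ makes the escape contribution negligible. On the no-escape event one has $\beta\sum V\le n\beta h$ and $|X_n|\le\ell$, so $\limsup\tfrac1n\log E_x^{\pi,\omega}\le\beta h$; letting $h\downarrow 0$ gives the bound $\le 0$.

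The matching lower bound is the principal difficulty and is where correctors from the large deviation theory underlying Theorem~\ref{thmno} come in. For each $\mu\in\bR$ there is a measurable corrector $F_\beta^\mu\colon\Omega\to\bR$ satisfying a cocycle/eigenfunction equation linked to $\Lambda_\beta(\mu)$; testing the Bellman equation for $u(n,x,\omega\mid\delta,\beta,\theta)$ against $F_\beta^\mu$ and optimising over $\mu\in[\theta-c,\theta+c]$ would yield, for every $\pi\in\cP_n(\delta)$,
\[
\tfrac1n\log E_x^{\pi,\omega}\!\bigl[e^{\beta\sum V+\theta X_n}\bigr]\;\ge\;\sup_{|\mu-\theta|\le c}\bigl[\Lambda_\beta(\mu)-\log\cosh(\theta-\mu)\bigr]+o(1).
\]
In the strong-control regime $\beta<\log\cosh c$, the unconstrained supremum (at $\mu=\theta$) equals $\Lambda_\beta(\theta)$, but the concave penalty $-\log\cosh(\theta-\mu)$ drags the maximiser to the boundary of the admissible interval, and using the defining relation $\Lambda_\beta(\bar\theta-c)=\log\cosh c$ together with convexity of $\Lambda_\beta$ identifies the supremum as $0$ for $|\theta|<\bar\theta$ and as $\Lambda_\beta(|\theta|-c)-\log\cosh c$ for $|\theta|\ge\bar\theta$, matching the upper bound. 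The product form \eqref{simdigel} of the limit and the uniformity in $x$ in \eqref{homlimit} then follow from a standard Lipschitz estimate on $x\mapsto u(n,x,\omega\mid\delta,\beta,\theta)-\theta x$. The hardest step will be constructing an environmental supersolution robust against adversarial choices of $\psi$ and correctly capturing the $\bar\theta$ threshold: unlike the $\delta=0$ case, no subadditivity is available, and the flat piece must be forced by the corrector-based bound itself.
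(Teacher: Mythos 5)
Your high-level architecture matches the paper's: a change of measure to SSRW with tilts in $[-c,c]$, a constant-drift policy $\pileft$ or $\piright$ for the upper bound in the large-$|\theta|$ regime, a valley-confinement policy $\pieL$ for the flat piece, and corrector-based lower bounds. However, the proposed lower-bound formula is incorrect. You claim that testing against correctors gives, for every policy $\pi$,
\[
\tfrac1n\log E_x^{\pi,\omega}\bigl[e^{\beta\sum V+\theta X_n}\bigr]\ge\sup_{|\mu-\theta|\le c}\bigl[\Lambda_\beta(\mu)-\log\cosh(\theta-\mu)\bigr]+o(1),
\]
and that the sup evaluates to $\Lambda_\beta(|\theta|-c)-\log\cosh c$ for $|\theta|\ge\bar\theta$. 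Take $\theta=c$ in the strong-control regime: the theorem's value is $\Lambda_\beta(0)-\log\cosh c=\beta-\log\cosh c<0$, but your sup is at least $\Lambda_\beta(2c)-\log\cosh c$, which by Proposition \ref{temelsaf}(b) is $\ge\beta\mathbb{E}[V(\cdot)]+\log\cosh(2c)-\log\cosh c>0$. So the asserted lower bound exceeds the correct value of $\overline H_{\delta,\beta}(c)$ and would contradict the upper bound. The supremum over $\mu$ is simply not the right variational quantity; the paper instead \emph{fixes} the single corrector $F_{\beta,\theta-c}$ and shows via Lemma \ref{derin} and \eqref{budaynim} that, for each step, any adversarial choice of the bang-bang tilt does no better than $-c$. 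Moreover, this single-corrector argument genuinely fails when $|\theta|<\bar\theta$ (the inequality \eqref{eskinut} goes the wrong way once $\Lambda_\beta(\theta-c)>\log\cosh c$); the flat piece there requires the separate $\bar\beta$-scaling trick \eqref{LBpart4}, which your sketch never invokes.

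The confinement upper bound for $|\theta|<\bar\theta$ also has a gap. You want $\ell\gg\log n$ so that the escape probability $ne^{-c\ell}\to0$; but Lemma \ref{gayettehos} only produces an $h$-valley of \emph{fixed} length $2\ell$ within $o(n)$ of the starting point, and for a \emph{growing} $\ell=\Theta(\log n)$ the nearest such valley is, generically, at super-polynomial distance (already for i.i.d.\ $V$ with $\mathbb{P}(V\le h)$ small, and with no quantitative control at all under general ergodicity via \eqref{ass_pan}), so the walker cannot reach it in $o(n)$ steps. The paper avoids this by keeping $\ell$ fixed and accounting for the inevitable escapes via the exponential moment of the number of complete left excursions (the quantity $J_\ell(2c)$ and Proposition \ref{countexcursion}), which converges to $\log\cosh c$ only after $\ell\to\infty$; this large-deviation bookkeeping of excursions, together with the H\"older interpolation over $\theta\in[-c,c]$ that produces \eqref{UBpart2}, is essential and is not visible in your outline.
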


Substituting $c=0$ in \eqref{weaklimit} reproduces the formula in \eqref{nolimit}. Similarly, taking $c\to\infty$ in \eqref{stronglimit} reproduces the formula in \eqref{fulllimit} by Proposition \ref{temelsaf}(d).

\subsection{Asymptotically optimal policies}\label{asop}

The proofs of Theorems \ref{thmfull}, \ref{thmweak} and \ref{thmstrong} are constructive in the sense that we identify RW policies that are asymptotically optimal in each case. We introduce these policies below.

For every $h\in(0,1)$, $\ell\in\mathbb{N}$ and $\mathbb{P}$-a.e.\ $\omega$, we choose an $h$-valley (recall from Definition \ref{pandef}) of the form $[x_*-\ell,x_*+\ell-1]$ with some $x_*\in\mathbb{Z}$ that is suitably close to the starting point of the RW (see Remark \ref{dundundur} for details). 
We define a RW policy $\pieL$ by setting
\begin{equation}\label{pengol}
\pieL_i(n,\omega,y,1) = \begin{cases}\frac{1 + \delta}{2}&\ \text{if $y < x_*$},\\
\frac{1 - \delta}{2}&\ \text{if $y \ge x_*$}.\end{cases}
\end{equation}
Note that it is a bang-bang policy (see, e.g., \cite{Art1980}). We also consider the spatiotemporally constant bang-bang policies $\pileft$ and $\piright$ given by
\begin{equation}\label{sagsol}
\pileft_i(n,\omega,y,1) \equiv \frac{1 - \delta}{2}\quad\text{and}\quad\piright_i(n,\omega,y,1) \equiv \frac{1 + \delta}{2}.
\end{equation}

In each of the three regimes of weak, strong and full control, the graph of $\overline H_{\delta,\beta}(\theta)$ against $\theta$ has a flat region centered at the origin (see Figure \ref{matrixfigure}). When $\theta$ is in this flat region, it will turn out that the infimum in \eqref{beforelimit} can be taken over the set of $\pieL$ with arbitrarily small $h\in(0,1)$ and arbitrarily large $\ell\in\mathbb{N}$. Doing so creates a $o(n)$ difference which does not change the limit in \eqref{homlimit}. (When $\delta = 1$, it suffices to take $\ell = 1$.) On the other hand, when $\theta$ is to the right (resp.\ left) of the flat region centered at the origin, it will turn out that the infimum in \eqref{beforelimit} is asymptotically attained at $\pileft$ (resp.\ $\piright$) up to a $o(n)$ term as $n\to\infty$.

Even though the regimes of weak and strong control share a common class of asymptotically optimal policies at (say) $\theta = 0$, namely the policies $\pieL$, the value of $\overline H_{\delta,\beta}(0)$ is different in these two cases (see Theorems \ref{thmweak} and \ref{thmstrong}), which is caused by the difference in the large deviation behavior of the walk under $\pieL$. In this sense, our optimal control problem can be thought of as a two-person game where the players are (i) the controller and (ii) the particle exhibiting atypical behavior. This point will become clear in the proofs.

\subsection{Homogenization of the Bellman equation}\label{homressec}

For every $n\in\mathbb{N}$, $x\in\mathbb{Z}$, $\omega\in\Omega$, $\delta\in[0,1]$, $\beta > 0$ and $\theta\in\mathbb{R}$, we write $u(n,x,\omega) = u(n,x,\omega\,|\,\delta,\beta,\theta)$ for notational brevity and then arrange \eqref{beforelimit} as
$$e^{u(n,x,\omega)} = \inf_{\pi\in\mathcal{P}_n(\delta)} E_x^{\pi,\omega}\left[e^{\beta\sum_{i=0}^{n-1}V(T_{X_i}\omega) + \theta X_n}\right].$$
Decomposing the expectation in the corresponding expression for $e^{u(n+1,x,\omega)}$ with respect to the first step of the controlled walk and applying the Bellman principle gives
\begin{equation}\label{kisamioldu}
u(n+1,x,\omega) = \beta V(T_x\omega) + \inf_{q\in[\frac{1-\delta}{2},\frac{1+\delta}{2}]}\log\left(q e^{u(n,x+1,\omega)} + (1-q)e^{u(n,x-1,\omega)}\right).
\end{equation}
Due to linearity in the parameter $q$ and the monotonicity of the logarithm function, the infimum on the right-hand side of \eqref{kisamioldu} is attained at $\frac{1 - \delta}{2}$ or $\frac{1 + \delta}{2}$. (Therefore, the infimum in \eqref{beforelimit} can be taken over the set of bang-bang policies. We will recapitulate and use this in Section \ref{hssira}.) Evaluating this infimum, switching to the parameter $c$ introduced in \eqref{parasi} in the case $\delta\in(0,1)$, and finally substracting $u(n,x,\omega)$ from both sides of \eqref{kisamioldu}, we deduce that
\begin{equation}\label{oriHJ2}
\nabla_1 u(n,x,\omega) = \begin{cases}
\frac1{2}\Delta_2 u(n,x,\omega) + \log\cosh(\nabla_2 u(n,x,\omega)) + \beta V(T_x\omega)&\ \text{if $\delta = 0$,}\\
\frac1{2}\Delta_2 u(n,x,\omega) + \log\cosh(|\nabla_2 u(n,x,\omega)| - c) - \log\cosh(c) + \beta V(T_x\omega)&\ \text{if $\delta\in(0,1)$,}\\
\frac1{2}\Delta_2 u(n,x,\omega) - |\nabla_2 u(n,x,\omega)| + \beta V(T_x\omega)&\ \text{if $\delta = 1$.}\end{cases}
\end{equation}
Here, we use the notation
\begin{align*}
\nabla_1 u(n,x,\omega) &= u(n+1,x,\omega) - u(n,x,\omega),\\
\nabla_2 u(n,x,\omega) &= \frac1{2}[u(n,x+1,\omega) - u(n,x-1,\omega)]\quad\text{and}\\
\Delta_2 u(n,x,\omega) &= u(n,x-1,\omega) + u(n,x+1,\omega) - 2u(n,x,\omega)
\end{align*}
for these difference operators. Hence, \eqref{beforelimit} solves a second-order HJ stochastic partial difference equation, subject to the linear initial condition $u(0,x,\omega) = \theta x$, with the following separable random Hamiltonian:
\begin{equation}\label{orhemi}
H_{\delta,\beta}(\theta,x,\omega) = K_\delta(\theta) + \beta V(T_x\omega),\quad K_\delta(\theta) = \begin{cases}
\log\cosh(\theta)&\ \text{if $\delta = 0$,}\\
\log\cosh(|\theta| - c) - \log\cosh(c)&\ \text{if $\delta\in(0,1)$,}\\
-|\theta|&\ \text{if $\delta = 1$.}\end{cases}
\end{equation}

For every $\ep>0$, $t\ge0$, $x\in\mathbb{R}$ and $\omega\in\Omega$, let
$$u_\ep(t,x,\omega) = \ep u\left([\ep^{-1}t],[\ep^{-1}x],\omega\right).$$
After appropriate substitutions, \eqref{oriHJ2} becomes
\begin{equation}\label{resHJ2}
\nabla_1^\ep u_\ep(t,x,\omega) = \frac{\ep}{2}\Delta_2^\ep u_\ep(t,x,\omega) + H_{\delta,\beta}(\nabla_2^\ep u_\ep(t,x,\omega),[\ep^{-1}x],\omega),
\end{equation}
where
\begin{align*}
\nabla_1^\ep u_\ep(t,x,\omega) &= \ep^{-1}[u_\ep(t+\ep,x,\omega) - u_\ep(t,x,\omega)],\\
\nabla_2^\ep u_\ep(t,x,\omega) &= (2\ep)^{-1}[u_\ep(t,x+\ep,\omega) - u_\ep(t,x-\ep,\omega)]\quad\text{and}\\
\Delta_2^\ep u_\ep(t,x,\omega) &= \ep^{-2}[u_\ep(t,x-\ep,\omega) + u_\ep(t,x+\ep,\omega) - 2u_\ep(t,x,\omega)].
\end{align*}

As we mentioned in Section \ref{kereviziyi}, the function
$u_o(t,x) = u_o(t,x\,|\,\delta,\beta,\theta) = t\overline H_{\delta,\beta}(\theta) + \theta x$ solves 
\begin{equation}\label{ihlaravad}
\frac{\partial u_o}{\partial t}(t,x) = \overline H_{\delta,\beta}\left(\frac{\partial u_o}{\partial x}(t,x)\right), \quad
\corO{u_o(0,x) = \theta x.}
\end{equation}
Our final result combines Theorems \ref{thmno}, \ref{thmfull}, \ref{thmweak} and \ref{thmstrong}, improves the pointwise convergence (in $t>0$ and $x\in\mathbb{R}$) in their statements to uniform convergence on compact sets.

\begin{theorem}[Homogenization with linear initial data]\label{unifhom2}
Assume \eqref{ass_wlog} and \eqref{ass_pan}. If $\delta\in[0,1]$, $\beta>0$ and $\theta\in\mathbb{R}$, then for $\mathbb{P}$-a.e.\ $\omega$ the function $u_\ep(\cdot,\cdot,\omega)$ converges to $u_o(\cdot,\cdot)$ as $\ep\to 0$, uniformly on compact subsets of $[0,\infty)\times\mathbb{R}$, 
  \corO{with the effective Hamiltonian $\overline H_{\delta,\beta}(\theta)$ given in \eqref{nolimit}, \eqref{fulllimit}, \eqref{weaklimit} and \eqref{stronglimit} in the cases of no, full, weak and strong control, respectively.}
\end{theorem}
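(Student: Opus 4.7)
The plan is to upgrade the pointwise convergence from Theorems~\ref{thmno}, \ref{thmfull}, \ref{thmweak}, and \ref{thmstrong} to uniform convergence on compact subsets of $[0,\infty)\times\bR$. My strategy combines (a) an a priori $L^\infty$ bound on $\{u_\ep\}$, (b) a temporal equi-Lipschitz estimate, and (c) an exact spatial shift identity that reduces the remaining task to a uniform ergodic statement about $u(n,0,T_y\omega)$ as $y$ varies.

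For (a), since $V\in[0,1]$ and $|X_n-x|\le n$ under any $\pi\in\cP_n(\delta)$, \eqref{beforelimit} immediately yields
\[
-|\theta|n \;\le\; u(n,x,\omega)-\theta x \;\le\; (\beta+|\theta|)n,
\]
so $\{u_\ep(\cdot,\cdot,\omega)\}$ is locally uniformly bounded. For (b), I would extend a near-optimal policy for $u(n,x,\omega)$ by the unbiased step $\pi_n\equiv 1/2$ (admissible for every $\delta\ge 0$) and use $V\le 1$ to obtain $u(n+1,x,\omega)-u(n,x,\omega)\le \beta+\log\cosh(\theta)$; conversely, truncating a near-optimal policy for $u(n+1,x,\omega)$ to its first $n$ steps and using $V\ge 0$ with $|X_{n+1}-X_n|=1$ gives $u(n+1,x,\omega)-u(n,x,\omega)\ge -|\theta|$. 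After rescaling, $u_\ep(\cdot,x,\omega)$ is Lipschitz in $t$ with constant $\beta+|\theta|$ uniformly in $\ep,x,\omega$, modulo the unavoidable $O(\ep)$ error from the floor function.

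For (c), the change of variables $Y_i=X_i-1$ together with $V(T_{X_i}\omega)=V(T_{Y_i}T_1\omega)$ and the induced bijection on $\cP_n(\delta)$ yields the exact shift identity $u(n,x+1,\omega)=\theta+u(n,x,T_1\omega)$. Iterating,
\[
u_\ep(t,x,\omega) \;=\; \theta\,\ep[\ep^{-1}x] \;+\; \ep\, u\big([\ep^{-1}t],\,0,\,T_{[\ep^{-1}x]}\omega\big),
\]
where the first summand converges to $\theta x$ uniformly on compacts. Thus it remains to show that $\Phi_\ep(t,x,\omega):=\ep\, u([\ep^{-1}t],0,T_{[\ep^{-1}x]}\omega)$ converges to $t\,\overline H_{\delta,\beta}(\theta)$ uniformly on compact subsets of $[0,\infty)\times\bR$ for $\bP$-a.e.\ $\omega$. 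Combining (b) with pointwise convergence of $\Phi_\ep$ at countably many rational $(t,x)$ (a full-measure event obtained by intersecting the sets produced by the base theorems) reduces the problem to the uniform ergodic statement: for every $M>0$ and $\bP$-a.e.\ $\omega$,
\[
\sup_{|y|\le M\ep^{-1}}\left|\frac{u([\ep^{-1}t],0,T_y\omega)}{[\ep^{-1}t]}-\overline H_{\delta,\beta}(\theta)\right|\longrightarrow 0 \quad\text{as } \ep\to 0.
\]

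This uniformity in the starting shift is the main obstacle, since pointwise a.s.\ convergence of $u(n,0,\omega)/n$ does not automatically transfer to a supremum over $|y|\le M\ep^{-1}$, and spatial Lipschitz bounds on $u(n,\cdot,\omega)$ are not uniform in $n$. I would resolve it by revisiting the constructive proofs of Theorems~\ref{thmno}--\ref{thmstrong}: their matching upper and lower bounds on $u(n,0,\omega)$ are produced by ergodic averages along the explicit bang-bang policies $\pieL$, $\pileft$, and $\piright$ of Section~\ref{asop}, which depend on the shifted environment $T_y\omega$ only through bounded stationary observables. A maximal-ergodic argument applied to these observables, combined with a union bound over the $O(\ep^{-1})$ integer shifts $y$, upgrades the pointwise a.s.\ convergence to uniformity in $y$. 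Together with (b) and the (affine, hence Lipschitz) continuity of $u_o$, this yields the claimed uniform convergence on compacts.
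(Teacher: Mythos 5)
Your plan of attack --- temporal equi-Lipschitz, reduction to a spatial-shift statement, then an epsilon-net argument --- has the right general shape, and your parts (a) and (b) are correct (the Bellman-type extension/truncation argument gives exactly the temporal Lipschitz bound, which is also what the paper's Lemma~\ref{cansene} records). However, there are two substantive problems with part~(c).

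First, your claim that ``spatial Lipschitz bounds on $u(n,\cdot,\omega)$ are not uniform in $n$'' is false whenever $\delta\in[0,1)$. In that range, for any admissible policy the probability of marching monotonically from $x$ to $y$ in $|x-y|$ steps is at least $\big(\tfrac{1-\delta}{2}\big)^{|x-y|}$, which gives $u(m+|x-y|,x,\omega)\ge u(m,y,\omega)+|x-y|\log\tfrac{1-\delta}{2}$; combined with your temporal bound this yields
\[
|u(n,x,\omega)-u(n,y,\omega)|\le\Big(\beta+|\theta|-\log\tfrac{1-\delta}{2}\Big)\,|x-y|,
\]
uniformly in $n$ and $\omega$. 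This is exactly Lemma~\ref{cansene} in the paper, and once one has a full space-time equi-Lipschitz bound, the usual net argument finishes the proof with no need for the shift-covariance reduction. You have created a harder problem than is actually there in the elliptic case.

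Second, and more seriously, the shift-covariance reduction is genuinely necessary only when $\delta=1$ (where $\log\tfrac{1-\delta}{2}=-\infty$ kills the Lipschitz estimate), and there your proposed resolution --- a maximal-ergodic estimate combined with ``a union bound over the $O(\ep^{-1})$ integer shifts'' --- does not work as stated. A union bound over $\Theta(\ep^{-1})$ events needs summable (or at least $o(\ep)$) tail probabilities; the Birkhoff and maximal ergodic theorems supply almost-sure convergence but no quantitative rate, so you cannot beat the $\ep^{-1}$ entropy of the shift set this way without additional hypotheses (e.g.\ mixing rates) that the paper does not assume. What actually makes the $\delta=1$ argument go through in the paper is not a post hoc ergodic upgrade but the observation that the upper and lower bounds for $u_\ep(t,x,\omega)$ in Sections~\ref{apirband}--\ref{lovirband} are already \emph{uniform in $(t,x)$ on compacts}: the upper bound from marching into an $h$-valley uses Lemma~\ref{gayettehos}, which is explicitly formulated as ``for every $x\in[-B,B]$ and $n\ge n_0$'', and the bounds involving the corrector $G_\beta$ use Lemma~\ref{cencoclem}, whose conclusion is a supremum over all nearest-neighbor paths started from $|x|\le B$. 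The uniformity you are looking for is therefore built into those two lemmas, not recovered by a separate maximal inequality. You should replace your final paragraph by an appeal to that built-in uniformity.
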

\corO{In the language of homogenization theory, 
  Theorem \ref{unifhom2} says that
 the second-order HJ stochastic partial difference equation in 
 \eqref{resHJ2} with the initial condition 
 $u_\ep(0,x,\omega) = \theta\ep[\ep^{-1}x]$ homogenizes to the 
 first-order HJ deterministic partial differential 
 equation in \eqref{ihlaravad}. }

The original Hamiltonian $H_{\delta,\beta}(\theta,x,\omega) = K_\delta(\theta) + \beta V(T_x\omega)$ (given in \eqref{orhemi}) is convex in $\theta$ in the case of no control, and it is nonconvex in the cases of weak, strong and full control. On the other hand, the effective Hamiltonian $\overline H_{\delta,\beta}(\theta)$ is convex in $\theta$ in the cases of no and weak control, and it is nonconvex in the cases of strong and full control. (See Figure \ref{matrixfigure}.) We summarize this as follows:
\begin{equation}\label{marcak}
\text{$\overline H_{\delta,\beta}(\theta)$ is convex in $\theta$}\ \iff\ \log\cosh(c)\le\beta \iff\ \delta\le\sqrt{1-e^{-2\beta}}.
\end{equation}

\begin{remark}\label{remzik}
Observe that
\begin{itemize}
	\item [(i)] $\log\cosh(c)$ is equal to the depth of the wells in the graph of $K_\delta(\theta)$ against $\theta$, and
	\item [(ii)] 
	$\beta = \sup\{\beta V(T_x\omega) - \beta V(T_y\omega):\, x,y\in\mathbb{Z}\}$
	for $\mathbb{P}$-a.e.\ $\omega$ (by \eqref{ass_wlog}).
\end{itemize}
Therefore, the first equivalence in \eqref{marcak} is a purely geometric characterization of the convexity of the effective Hamiltonian in terms of the original Hamiltonian.
\end{remark}

\begin{figure}
\includegraphics{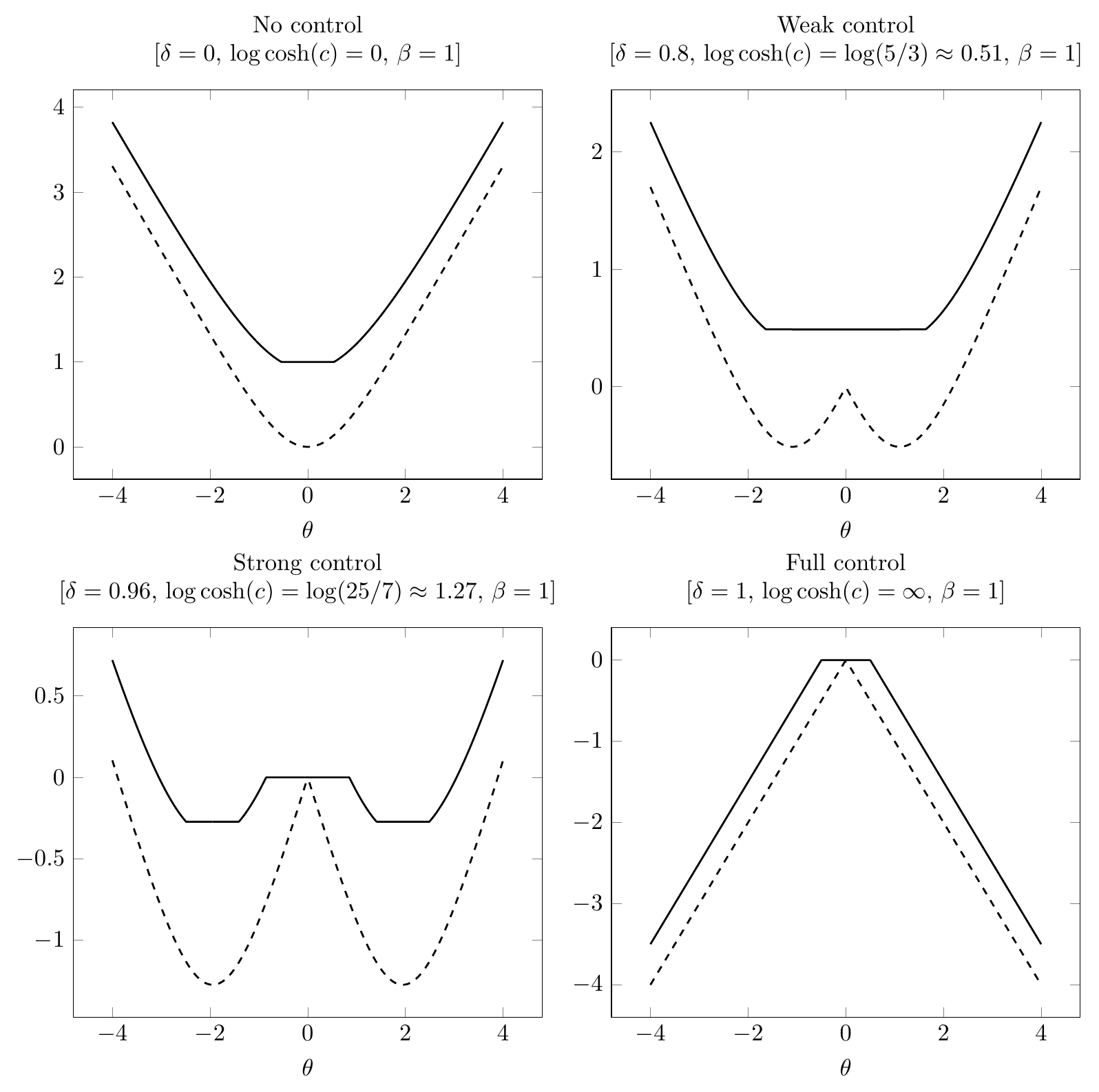}
\caption{Representative graphs of $K_\delta(\theta)$ (dashed) and $\overline H_{\delta,\beta}(\theta)$ (solid) against $\theta$ in each control regime when $\beta = 1$. There is weak control if and only if $0<\log\cosh(c)\le\beta = 1$ if and only if $0<\delta\le\sqrt{1 - e^{-2\beta}}\approx 0.93$. (To sketch these graphs, we assumed without loss of generality that $\mathbb{E}[V(\cdot)] = 0.5$.)}
\label{matrixfigure}
\end{figure}

\subsection{Some previous results on the homogenization of HJ equations}\label{syurdubey}

Recall from Section \ref{subseczeroc} that the existence of the tilted free energy was already shown for a general class of RWs in random potentials on $\mathbb{Z}^d$ with any $d\ge1$ (see \cite{Zer1998,Flu2007,Yil2009,RasSepYil2013} and also Remark \ref{furrefvar}). In light of Theorem \ref{thmno}, this existence result can be seen as ``pointwise homogenization" at $(t,x)=(1,0)$ for a second-order HJ stochastic partial difference equation with linear initial data, where the Hamiltonian is given by the tilted free energy and hence convex in $\theta$. It is not hard to improve the pointwise convergence at $(t,x)=(1,0)$ to uniform convergence on compact subsets of $[0,\infty)\times\mathbb{R}$ (as we do so in Theorems \ref{thmno} and \ref{unifhom2} in the one-dimensional case with no control). To the best of our knowledge, there are no other previous results on the homogenization of second-order HJ stochastic partial difference equations.

There is a rich literature on the continuous analog of our discrete setting with no control and its suitable generalizations. Sznitman's work \cite{Szn1994} on large deviations for Brownian motion in a Poissonian potential on $\mathbb{R}^d$ employs the subadditive ergodic theorem and gives the first example of ``pointwise homogenization" of a second-order HJ stochastic partial differential equation (PDE) with linear initial data, where the Hamiltonian is quadratic (and hence convex) in $\theta$. Homogenization of second-order HJ stochastic PDEs (with general uniformly continuous initial data which is what is meant by default) was later established in \cite{LioSou2005} (using the subadditive ergodic theorem) and independently in \cite{KosRezVar2006} (using the ergodic and minimax theorems) for wide classes of Hamiltonians that are convex in $\theta$. In fact, as we mention in Section \ref{subseczeroc} and Remark \ref{furrefvar}, the existence of the tilted free energy for RWs in random potentials on $\mathbb{Z}^d$ was shown in \cite{Zer1998,Flu2007} and then \cite{Yil2009,RasSepYil2013} by building upon the ideas in \cite{Szn1994} and \cite{KosRezVar2006}, respectively. For further details and references on the homogenization of (first- and second-order) HJ stochastic PDEs with convex Hamiltonians, see \cite{Kos2007}.

There are also several works that prove homogenization for certain HJ stochastic PDEs with nonconvex Hamiltonians in arbitrary dimensions. In the second-order case (which is relevant to our setting), the work of Fehrman \cite{Feh_preprint} covers a class of ``level-set convex" Hamiltonians, whereas Armstrong and Cardaliaguet \cite{ArmCar_preprint} consider Hamiltonians that satisfy a finite range of dependence condition and are homogeneous in $\theta$. The Hamiltonian $H_{\delta,\beta}(\theta,x,\omega)$ in our setting (which is given in \eqref{orhemi} and is nonconvex in $\theta$ when $\delta\in(0,1]$) satisfies none of these conditions.

In the first-order case, Armstrong, Tran and Yu \cite{ArmTraYu2015} prove homogenization for a HJ stochastic PDE in arbitrary dimensions, where the Hamiltonian is of the form $H(\theta) + V(T_x\omega)$ with the specific choice $H(\theta) = (|\theta|^2 - 1)^2$. In a subsequent work \cite{ArmTraYu2016}, the same authors extend this result to any coercive $H(\theta)$ in one dimension. They also notice the relationship between (i) the convexity of the effective Hamiltonian $\overline H(\theta)$ and (ii) the size of the oscillations of $V(T_x\omega)$ in comparison to the depth of the wells of $H(\theta)$ (which is similar to Remark \ref{remzik}). Moreover, they give an implicit formula for $\overline H(\theta)$ under additional assumptions (see \cite[Lemma 5.2]{ArmTraYu2016}). The proofs in \cite{ArmTraYu2016} rely on the existence of sublinear correctors in one dimension which is parallel to our approach (see Section \ref{subsecstrategy} for a summary of our proofs), but are otherwise quite different since they use (first-order) nonlinear PDE techniques. The main homogenization result in \cite{ArmTraYu2016} is extended by Gao \cite{Gao_preprint} to general (i.e., not necessarily separable) coercive Hamiltonians in one dimension.

In a recent paper, Davini and Kosygina \cite{DavKos_preprint} consider first- and second-order HJ stochastic PDEs in arbitrary dimensions. Using a variant of the perturbed test function method which is originally due to Evans \cite{Eva1989}, they prove that ``pointwise homogenization" at $(t,x) = (1,0)$ with linear initial data in fact implies homogenization with general uniformly continuous initial data. We expect that this result can be adapted to our discrete setting, too, and in particular extend Theorem \ref{unifhom2} to uniformly continuous initial data. However, we did not pursue this direction since our starting point is controlled RWs in random potential for which the corresponding initial data is linear.

As an application of their main result in \cite{DavKos_preprint}, Davini and Kosygina show homogenization for nonconvex Hamiltonians of the following form in one dimension: there exist finitely many $\theta_1,\ldots,\theta_n$ such that the Hamiltonian is constant at these values and it is convex in $\theta$ on each of the intervals $(-
\infty,\theta_1)$, $(\theta_1,\theta_2)$,\ \ldots, $(\theta_{n-1},\theta_n)$, $(\theta_n,+\infty)$. Due to the random additive term $\beta V(T_x\omega)$ in \eqref{orhemi}, the Hamiltonian $H_{\delta,\beta}(\theta,x,\omega)$ in our setting does not have this form.

Finally, Ziliotto \cite{Zil_preprint} proves,
by giving a counterexample,
that first-order HJ stochastic PDEs do not always homogenize. His counterexample comes from a zero-sum differential game in two dimensions. The Hamiltonian is coercive, Lipschitz continuous and (of course) nonconvex in $\theta$. The environment is stationary and ergodic (in fact, slowly mixing). Even though there are currently no such counterexamples in the second-order case, Ziliotto's work suggests that one cannot prove homogenization results by purely qualitative arguments based on subadditivity when the Hamiltonian is nonconvex, and some kind of constructive approach (such as ours in this paper) is needed.

\section{Summary of the proofs}\label{subsecstrategy}

In order to convey the essence and strategy of the proofs of Theorems \ref{thmno}, \ref{thmfull}, \ref{thmweak} and \ref{thmstrong} to the reader at a relatively early stage in the paper, we provide here an overview without giving full details, proper justifications or references (which can all be found in the subsequent sections).

\subsection{No control}\label{summ1}

Similar to $u_o(t,x\,|\,\delta,\beta,\theta)$ in \eqref{homlimit} with $\delta=0$, we define $\Lambda_\beta^L(\theta,t,x)$ and $\Lambda_\beta^U(\theta,t,x)$ in \eqref{kosk1} and \eqref{kosk2} but via $\liminf$ and $\limsup$, respectively.

For every $h\in(0,1)$ and $\ell\in\mathbb{N}$, there is an $h$-hill of the form $[x^*-\ell,x^*+\ell-1]$ that is suitably close to the starting point of the RW. The distance is controlled by a small parameter $a>0$. We consider the event that the particle marches deterministically to $x^*$ and then spends the rest of the time in this $h$-hill, which gives the lower bound \begin{equation}\label{oakar}
\Lambda_\beta^L(\theta,t,x) \ge t\beta + \theta x
\end{equation}
after taking $a\to0$, $h\to1$ and $\ell\to\infty$.
 
If $\Lambda_\beta^U(\theta,t,x) = t\beta + \theta x$, then
$$u_o(t,x\,|\,0,\beta,\theta) = \Lambda_\beta^L(\theta,t,x) = \Lambda_\beta^U(\theta,t,x) = t\beta + \theta x = t\Lambda_\beta(\theta) + \theta x$$
and we are done. Otherwise, we construct a bounded and centered cocycle $F_{\beta,\theta}:\Omega\times\{-1,1\}\to\mathbb{R}$ (referred to as the corrector) that satisfies
\begin{equation}\label{aydenti}
e^\lambda = \frac1{2}e^{\beta V(\omega) + \theta + F_{\beta,\theta}(\omega,1)} + \frac1{2}e^{\beta V(\omega) - \theta + F_{\beta,\theta}(\omega,-1)}
\end{equation}
for some $\lambda>\beta$. The sums $\sum F_{\beta,\theta}(T_{x_i}\omega,z_{i+1})$ over nearest-neighbor paths are uniformly sublinear in the number of steps. We use these sublinear path sums to modify the exponential expectations on the right-hand sides of \eqref{kosk1} and \eqref{kosk2} without changing the values of $\Lambda_\beta^L(\theta,t,x)$ and $\Lambda_\beta^U(\theta,t,x)$. After this modification, it follows from  a repeated application of \eqref{aydenti} that
$$u_o(t,x\,|\,0,\beta,\theta) = \Lambda_\beta^L(\theta,t,x) = \Lambda_\beta^U(\theta,t,x) = t\lambda + \theta x = t\Lambda_\beta(\theta) + \theta x.$$
This completes the proof of Theorem \ref{thmno}. We also deduce that $\Lambda_\beta(\theta)\ge\beta$.

\subsection{Full control}\label{summ2}

Similar to $u_o(t,x\,|\,\delta,\beta,\theta)$ in \eqref{homlimit} with $\delta\in(0,1]$, we define $\overline H_{\delta,\beta}^U(\theta,t,x)$ and $\overline H_{\delta,\beta}^L(\theta,t,x)$ in \eqref{homlimsup} and \eqref{homliminf} but via $\limsup$ and $\liminf$, respectively.

For every $h\in(0,1)$, there is an $h$-valley of the form $[x_*-1,x_*]$ that is suitably close to the starting point of the RW. The distance is controlled by $a>0$ as in Section \ref{summ1}. Under the policy $\pitwo$ (given in \eqref{pengol}), the particle marches deterministically to $x_*$ and is then confined to $[x_*-1,x_*]$ for the rest of the time, which gives the upper bound
\begin{equation}\label{masog1}
\overline H_{1,\beta}^U(\theta,t,x) \le \theta x
\end{equation}
after taking $a\to0$ and $h\to0$. On the other hand, the particle marches deterministically to the left and to the right under the policies $\pileft$ and $\piright$, respectively (see \eqref{sagsol}), which gives the upper bound
\begin{equation}\label{masog2}
\overline H_{1,\beta}^U(\theta,t,x) \le t(\beta\mathbb{E}[V(\cdot)] - |\theta|) + \theta x
\end{equation}
by the Birkhoff ergodic theorem.

The upper bound in \eqref{masog2} is at least as good as the one in \eqref{masog1} when $\theta\ge\beta\mathbb{E}[V(\cdot)]$. In this case, we introduce a bounded and centered cocycle $G_\beta:\Omega\times\{-1,1\}\to\mathbb{R}$ (analogous to $F_{\beta,\theta}$ in Section \ref{summ1} but simpler) that satisfies
\begin{equation}\label{ozcanlar}
g_{\beta,\theta}(\omega,p) := pe^{\beta V(\omega) + \theta + G_\beta(\omega,1)} + (1-p)e^{\beta V(\omega) - \theta + G_\beta(\omega,-1)}\ge g_{\beta,\theta}(\omega,0) = e^{\beta\mathbb{E}[V(\cdot)] - \theta}
\end{equation}
for every $p\in[0,1]$. The sums $\sum G_{\beta}(T_{x_i}\omega,z_{i+1})$ over nearest-neighbor paths are uniformly sublinear in the number of steps. We use these sublinear path sums to modify the exponential expectation on the right-hand side of \eqref{homliminf} without changing the value of $\overline H_{1,\beta}^L(\theta,t,x)$. Then, it follows from the repeated application of \eqref{ozcanlar} that $\pileft$ is optimal and
\begin{equation}\label{masog3}
\overline H_{1,\beta}^L(\theta,t,x) \ge t(\beta\mathbb{E}[V(\cdot)] - \theta) + \theta x.
\end{equation}
This lower bound matches the upper bound in \eqref{masog2}
\corO{when $\theta\geq \beta \mathbb{E}[V(\cdot)]$.}

When $0<\theta<\beta\mathbb{E}[V(\cdot)]$, we introduce $\bar\beta = \bar\beta(\theta) := \frac{\theta}{\mathbb{E}[V(\cdot)]} < \beta$ and notice that
\begin{equation}\label{anago}
\overline H_{1,\beta}^L(\theta,t,x) \ge \overline H_{1,\bar\beta}^L(\theta,t,x) \ge t(\bar\beta\mathbb{E}[V(\cdot)] - \theta) + \theta x = \theta x,
\end{equation}
\corO{where the second inequality follows from}
 \eqref{masog3} since $\theta = \bar\beta\mathbb{E}[V(\cdot)]$. This lower bound matches the upper bound in \eqref{masog1}. The last two lower bounds are adapted to the $\theta<0$ case by symmetry. The $\theta=0$ case is easy. This completes the proof of Theorem \ref{thmfull}.

\subsection{Partial control: Upper bounds}\label{summ3}

The infima in the definitions of $\overline H_{\delta,\beta}^U(\theta,t,x)$ and $\overline H_{\delta,\beta}^L(\theta,t,x)$ (see \eqref{homlimsup} and \eqref{homliminf}) can be restricted to the set of bang-bang policies which take the values 
\begin{equation}\label{balerb}
\frac{1\pm\delta}{2} = \frac{e^{\pm c}}{e^c + e^{-c}} = \frac1{2}e^{\pm c - \log\cosh(c)}
\end{equation}
with the parameter $c$ introduced in \eqref{parasi}. We use \eqref{balerb} to perform a change of measure and express $\overline H_{\delta,\beta}^U(\theta,t,x)$ and $\overline H_{\delta,\beta}^L(\theta,t,x)$ in terms of expectation with respect to SSRW (see \eqref{dualformsup} and \eqref{dualforminf}). This gives an alternative formulation of our control problem where the policies are now exponential tilts denoted by $\alpha$ and taking the values $\pm c$.

In this alternative formulation, the policies $\pileft$ and $\piright$ (see \eqref{sagsol}) correspond to $\aleft$ and $\aright$ that are identically equal to $-c$ and $c$, respectively. Therefore, Theorem \ref{thmno} gives the upper bound
\begin{equation}\label{ekinoz}
\overline H_{\delta,\beta}^U(\theta,t,x) \le t\left(\min\{\Lambda_\beta(\theta - c), \Lambda_\beta(\theta + c)\} - \log\cosh(c)\right) + \theta x = t\left(\Lambda_\beta(|\theta| - c) - \log\cosh(c)\right) + \theta x.
\end{equation}

For every $h\in(0,1)$ and $\ell\in\mathbb{N}$, there is an $h$-valley of the form $[x_*-\ell,x_*+\ell-1]$ that is suitably close to the starting point of the RW, where the distance is controlled by $a>0$ as in Section \ref{summ1}. The policy $\pieL$ (see \eqref{pengol}) corresponds to $\aeL$ that is equal to $c$ at points to the left of $x_*$ and equal to $-c$ elsewhere. When $\theta = c$, the combined tilt (of $\theta$ and the control) is $2c$ at points to the left of $x_*$ and zero elsewhere, which gives a simple upper bound for $\overline H_{\delta,\beta}^U(c,t,x)$ (see \eqref{gazagel}). We dominate this upper bound using an exponential expectation involving the number of complete left excursions of a reflected RW on $[x_*-\ell,x_*+\ell-1]$ and show that
$$\overline H_{\delta,\beta}^U(c,t,x) \le t[\beta - \log\cosh(c)]^+ + cx.$$
This argument can be adapted to the $\theta = -c$ case. Finally, 
we use convexity to obtain the upper bound
\begin{equation}\label{asalk}
\overline H_{\delta,\beta}^U(\theta,t,x) \le t[\beta - \log\cosh(c)]^+ + \theta x\quad\text{for}\quad\theta\in[-c,c].
\end{equation}

Observe that, in the weak control regime ($\beta\ge\log\cosh(c)$), the 
upper bound in \eqref{asalk} is at least as good as the one in \eqref{ekinoz} 
since,
\corO{by Proposition \ref{temelsaf},}
$\Lambda_\beta(\theta \pm c) - \log\cosh(c) \ge \beta - \log\cosh(c) = [\beta - \log\cosh(c)]^+$. On the other hand, there is no such uniform (in $\theta\in[-c,c]$) comparison in the strong control regime ($\beta<\log\cosh(c)$).

\subsection{Partial control: Lower bounds}

In the alternative formulation we mentioned in Section \ref{summ3}, $\overline H_{\delta,\beta}^L(\theta,t,x)$ is expressed in terms of an exponential expectation with respect to SSRW (see \eqref{dualforminf}). Observe that the combined tilt (of $\theta$ and the control) in this expectation defines a martingale. Therefore, we can ignore its contribution at a small exponential cost by the Azuma-Hoeffding inequality, use \eqref{oakar} which is now applicable, and deduce that
$$\overline H_{\delta,\beta}^L(\theta,t,x) \ge t(\beta - \log\cosh(c)) + \theta x.$$
This lower bound matches the upper bound in \eqref{asalk} in the weak control regime ($\beta\ge\log\cosh(c)$) when $\theta\in[-c,c]$, and it also matches the upper bound in \eqref{ekinoz} when $\Lambda_\beta(|\theta| - c) = \beta$ (regardless of weak or strong control).

When $\theta\ge0$ and $\Lambda_\beta(\theta-c)>\beta$, we define
$$g_{\beta,\theta-c}(\omega,\xi) = \frac1{2}e^{\beta V(\omega) + \xi + F_{\beta,\theta-c}(\omega,1)} + \frac1{2}e^{\beta V(\omega) - \xi + F_{\beta,\theta-c}(\omega,-1)},$$
where $F_{\beta,\theta-c}$ is the corrector we mentioned in Section \ref{summ1}. Under the extra assumption that $\theta>c$, we show that
\begin{equation}\label{orform}
g_{\beta,\theta-c}(\omega,\theta+c) \ge g_{\beta,\theta-c}(\omega,\theta-c) = e^{\Lambda_\beta(\theta-c)}.
\end{equation}
Then, analogous to Sections \ref{summ1} and \ref{summ2}, we use the sublinear path sums $\sum F_{\beta,\theta-c}(T_{x_i}\omega,z_{i+1})$ to modify the exponential expectation on the right-hand side of \eqref{dualforminf}  without changing the value of $\overline H_{\delta,\beta}^L(\theta,t,x)$. By repeated application of \eqref{orform}, we deduce that $\aleft$ (i.e., $\pileft$ in the original formulation) is optimal and
\begin{equation}\label{farukg}
\overline H_{\delta,\beta}^L(\theta,t,x) \ge t(\Lambda_\beta(\theta-c) - \log\cosh(c)) + \theta x.
\end{equation}
This lower bound matches the upper bound in \eqref{ekinoz}. It is adapted to the $\theta<-c$ case by symmetry. This completes the proof of Theorem \ref{thmweak} (weak control).

It remains to obtain good lower bounds in the strong control regime ($\beta<\log\cosh(c)$) when $\theta\in(-c,c)$ and $\min\{\Lambda_\beta(\theta - c), \Lambda_\beta(\theta + c)\} > \beta$. (We know that $\Lambda_\beta(0)=\beta$.) The upper bound in \eqref{ekinoz} is at least as good as the one in \eqref{asalk} when $0<\theta<c$ and $\beta<\Lambda_\beta(\theta-c)\le\log\cosh(c)$. In this case, we show that \eqref{orform} continues to hold and it implies (by the same argument) the lower bound in \eqref{farukg}, which matches the upper bound in \eqref{ekinoz}.

When $0<\theta<c$ and $\beta<\log\cosh(c)<\Lambda_\beta(\theta-c)$, there exists a unique $\bar\theta(\beta,c)\in(0,c)$ such that
$$\Lambda_\beta(\bar\theta(\beta,c)-c) = \log\cosh(c)$$
and a unique $\bar\beta = \bar\beta(\theta,c)<\beta$ such that $\theta = \bar\theta(\bar\beta,c)$.
Analogous to \eqref{anago} in Section \ref{summ2},
$$\overline H_{\delta,\beta}^L(\theta,t,x) \ge \overline H_{\delta,\bar\beta}^L(\theta,t,x)\ge t(\Lambda_{\bar\beta}(\bar\theta(\bar\beta,c) - c) - \log\cosh(c)) + \theta x = \theta x$$
by \eqref{farukg} since $\bar\beta < \Lambda_{\bar\beta}(\theta - c) = \log\cosh(c)$. This lower bound matches the upper bound in \eqref{asalk}. The last two lower bounds are adapted to the $-c<\theta<0$ case by symmetry. The $\theta=0$ case is easy. This completes the proof of Theorem \ref{thmstrong} (strong control).

\section{No control}\label{zerocontrol}

\subsection{The tilted free energy}\label{existtilt}

In this section, we provide a self-contained proof of Theorem \ref{thmno} (see Section \ref{subseczeroc} for references to the literature on the existence of the tilted free energy). While doing so, we obtain some intermediate results which will be central to the proofs of Theorems \ref{thmfull}, \ref{thmweak}, \ref{thmstrong} and \ref{unifhom2}.

We start by recalling an elementary result regarding SSRW.

\begin{lemma}\label{appasakla}
For every $\ell\in\mathbb{N}$,
$$\lim_{n\to\infty}\frac1{n}\log P_0(X_i\in[-\ell,\ell-1]\ \text{for every}\ i\in[0,n]) = \log\cos(\pi/(2\ell + 1)).$$
\end{lemma}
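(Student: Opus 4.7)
The plan is to identify the probability on the left-hand side with the iterates of the SSRW transition kernel restricted to $[-\ell,\ell-1]$ with absorption at the boundary, and then recognize this restricted kernel as one half of the adjacency matrix of a path graph on $2\ell$ vertices, whose spectrum is classical.

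First I would let $Q$ denote the symmetric $2\ell\times 2\ell$ matrix indexed by $[-\ell,\ell-1]$ with $Q(x,x\pm1)=1/2$ whenever $x\pm1\in[-\ell,\ell-1]$ and zero otherwise. A one-line pathwise count gives
$$P_0\bigl(X_i\in[-\ell,\ell-1]\ \text{for every}\ i\in[0,n]\bigr) = \sum_{y\in[-\ell,\ell-1]}Q^n(0,y).$$
Then I would invoke the classical spectral decomposition of the (scaled) adjacency matrix of a path of length $2\ell$: the eigenvalues of $Q$ are
$$\lambda_k=\cos\!\left(\frac{k\pi}{2\ell+1}\right),\quad k=1,\ldots,2\ell,$$
with associated orthonormal eigenvectors $v_k(j)=\sqrt{2/(2\ell+1)}\,\sin\bigl(k(j+\ell+1)\pi/(2\ell+1)\bigr)$ for $j\in[-\ell,\ell-1]$. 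The spectral radius is $\rho(Q)=\cos(\pi/(2\ell+1))$.

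For the upper bound, using Cauchy--Schwarz and the fact that $Q$ is symmetric,
$$\sum_{y}Q^n(0,y)\le\sqrt{2\ell}\,\|Q^n e_0\|_2\le\sqrt{2\ell}\,\rho(Q)^n,$$
so $\limsup_n \tfrac{1}{n}\log P_0(\cdots)\le\log\cos(\pi/(2\ell+1))$. For the lower bound, I would restrict attention to even times and use the non-negative spectral expansion
$$Q^{2n}(0,0)=\sum_{k=1}^{2\ell}\lambda_k^{2n}v_k(0)^2\ge\lambda_1^{2n}v_1(0)^2,$$
where $v_1(0)=\sqrt{2/(2\ell+1)}\sin((\ell+1)\pi/(2\ell+1))>0$ since the argument lies in $(0,\pi)$. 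Since the event in question is monotone decreasing in $n$, the lower bound at even times implies the same liminf at odd times.

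The main (minor) subtlety is bipartiteness: the eigenvalue $\lambda_{2\ell}=-\cos(\pi/(2\ell+1))$ has the same absolute value as $\lambda_1$, so the signed sum $\sum_y Q^n(0,y)$ could in principle suffer cancellations that spoil a naive one-step lower bound. This is why I work with the diagonal entry $Q^{2n}(0,0)$ rather than the row sum for the lower bound, turning all contributions into squares and eliminating any sign issue. Combining the two bounds yields the claimed limit.
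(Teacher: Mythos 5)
Your proposal is correct and takes essentially the same approach as the paper, which simply cites the spectral decomposition of the SSRW transition kernel on $[-\ell,\ell-1]$ with absorbing boundary (referencing Spitzer). You have filled in the details — the Cauchy--Schwarz upper bound via the spectral radius, and the lower bound via the diagonal entry $Q^{2n}(0,0)$ at even times together with monotonicity to handle odd times, correctly noting the bipartiteness subtlety — but the underlying mechanism is identical.
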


\begin{proof}
	This follows immediately from the eigenvalues and eigenvectors of the adjacency matrix of SSRW on $[-\ell,\ell-1]$ with absorbing boundary conditions (see \cite[p.\ 239]{Spi1976}).
\end{proof}

Assume that \eqref{ass_wlog} and \eqref{ass_pan} hold. For every $\beta>0$, $\theta\in\mathbb{R}$, $t>0$ and $x\in\mathbb{R}$, let 
\begin{align}
\Lambda_\beta^L(\theta,t,x) &= \liminf_{\ep\to0}\ep\log E_{[\epin x]}\left[e^{\beta\sum_{i=0}^{[\epin t]-1}V(T_{X_i}\omega) + \theta X_{[\epin t]}}\right]\quad \text{and}\label{kosk1}\\
\Lambda_\beta^U(\theta,t,x) &= \limsup_{\ep\to0}\ep\log E_{[\epin x]}\left[e^{\beta\sum_{i=0}^{[\epin t]-1}V(T_{X_i}\omega) + \theta X_{[\epin t]}}\right].\label{kosk2}
\end{align}
Strictly speaking, we should write $\Lambda_\beta^L(\theta,t,x,\omega)$ and $\Lambda_\beta^U(\theta,t,x,\omega)$ to indicate the dependence on $\omega$, too. However, it is clear from the ellipticity of SSRW that $\Lambda_\beta^L(\theta,t,x,\omega) = \Lambda_\beta^L(\theta,t,x,T_1\omega)$, and therefore $\Lambda_\beta^L(\theta,t,x)$ is $\mathbb{P}$-a.s.\ constant by the ergodicity assumption. The same reasoning applies to $\Lambda_\beta^U(\theta,t,x)$. For the purpose of proving the existence of the tilted free energy $\Lambda_\beta(\theta)$ (see \eqref{nolimit}), it suffices to take $t=1$ and $x=0$. The latter applies to the following lemma, too.

\begin{lemma}\label{gayettehos}
For every $h\in(0,1)$, $\ell\in\mathbb{N}$, $a>0$, $B>0$ and $\mathbb{P}$-a.e.\ $\omega$, there exists an $n_0 = n_0(\omega,h,\ell,a,B)$ such that the interval $[n(x-a),n(x+a)]$ contains an $h$-valley (resp.\ $h$-hill) of the form $[x_*-\ell,x_*+\ell -1]$ (resp.\ $[x^*-\ell,x^*+\ell -1]$) for every $x\in[-B,B]$ and $n\ge n_0$.
\end{lemma}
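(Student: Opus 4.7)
The plan is to use ergodicity together with assumption \eqref{ass_pan} to conclude that $h$-valleys of length $2\ell$ have positive asymptotic density in $\mathbb{Z}$, so that any window of width of order $na$ is forced to contain one for all large $n$; uniformity over the continuous family $x\in[-B,B]$ will be obtained by reducing to a finite problem via a covering of $[-B,B]$ by $O(B/a)$ centers.

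More concretely, I would let $A_v=\{\omega:V(T_y\omega)\le h\text{ for all }y\in[0,2\ell-1]\}$, so that $[x_*-\ell,x_*+\ell-1]$ is an $h$-valley precisely when $T_{x_*-\ell}\omega\in A_v$, and $\mathbb{P}(A_v)>0$ by \eqref{ass_pan}. Applying the Birkhoff ergodic theorem to both $T$ and $T^{-1}$ and combining them via a partial-sum decomposition split at the origin gives a full-measure set $\Omega_0$ on which, for every integer sequence $(M_n),(M'_n)$ with $M_n/n\to m$, $M'_n/n\to m'$ and $m<m'$,
$$\frac{1}{n}\#\{y\in[M_n,M'_n]:T_y\omega\in A_v\}\longrightarrow \mathbb{P}(A_v)(m'-m)>0.$$
The same limit holds with $A_v$ replaced by the hill event $A_h=\{\omega:V(T_y\omega)\ge h\text{ for all }y\in[0,2\ell-1]\}$.

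For uniformity in $x$, I would pick $x_1,\ldots,x_K\in[-B,B]$ with $x_1=-B$, $x_K=B$ and consecutive spacing at most $a/2$, so that every $x\in[-B,B]$ satisfies $|x-x_k|\le a/4$ for some $k$ and hence $[n(x_k-a/2),n(x_k+a/2)]\subset[n(x-a),n(x+a)]$. Applying the density statement above at each $k$, with the endpoints shifted inward by $\ell$ so that the entire valley of length $2\ell$ fits inside the window, produces random thresholds $n_0^v(\omega,k)$ beyond which the window centered at $x_k$ contains an admissible $x_*$. Since there are only $K$ indices, intersecting the corresponding full-measure events and setting $n_0^v(\omega):=\max_{k\le K}n_0^v(\omega,k)$ gives a finite threshold $\mathbb{P}$-a.s. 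The analogous argument with $A_h$ in place of $A_v$ yields $n_0^h$, and $n_0:=\max\{n_0^v,n_0^h\}$ does the job for both parts of the lemma.

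The only genuinely non-routine step is the passage from one-sided Birkhoff convergence to the two-sided density statement above, which is standard by splitting at the origin and handling the three cases $m'\le 0$, $m<0<m'$, and $0\le m$ separately; the remainder is elementary bookkeeping with integer endpoints and the harmless inward shift by $\ell$ that keeps the valleys (respectively hills) inside the window.
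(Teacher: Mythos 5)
Your proposal is correct and follows essentially the same route as the paper's proof: apply the Birkhoff ergodic theorem (two-sided, splitting at the origin) to obtain that the translate events $\{T_y\omega\in A_v\}$ have positive asymptotic density, and then dispose of the uniformity in $x\in[-B,B]$ by reducing to a finite grid of $O(B/a)$ centers and intersecting the corresponding full-measure events. The paper phrases the grid via integers $j\in[-B/a,B/a-1]$ and windows $[nja,n(j+1)a]$, while you pick centers $x_k$ with spacing $a/2$; this is a cosmetic difference, and your handling of the inward shift by $\ell$ matches the paper's implicit requirement that the whole valley lie inside the window.
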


\begin{proof}
  \corOO{Without loss of generality, we may and will assume that $a/B\leq 1$.}
  For every $h\in(0,1)$, $k,\ell\in\mathbb{N}$ and $\mathbb{P}$-a.e.\ $\omega$, the number of $h$-valleys of length $2\ell - 1$ contained in the interval $[0,k]$ (resp.\ the interval $[-k,0]$) is $kp_{h,\ell} + o(k)$
by the Birkhoff ergodic theorem, where
$$p_{h,\ell} := \mathbb{P}(\text{$[0,2\ell-1]$ is an $h$-valley})>0$$
by \eqref{ass_pan}. Therefore, for 
\corOO{$j\in[-\frac{B}{a},\frac{B}{a}-1]\cap\mathbb{Z}$},  
  the number of $h$-valleys of length $2\ell-1$ contained in the interval 
  \corOO{$[0,nja]$ is $n|j|ap_{h,\ell}(1\pm  a/4B)$, 
  for all $n\geq n_1(\omega,h,\ell,a,B)$. 
    It follows that for such $n$, the number of $h$-valleys of
    length $2\ell-1$ contained in $[nja,n(j+1)a]$ is 
    at least $\frac12 nap_{h,\ell}$.
  In particular, it is positive for $n\geq n_0(\omega,h,\ell,a,B)$.} 
  For every $x\in[-B,B]$, the interval $[n(x-a),n(x+a)]$ contains 
  $[nja,n(j+1)a]$ for at least one such $j$, and the desired result follows. The same argument applies to $h$-hills of length $2\ell-1$.
\end{proof}

\begin{lemma}\label{kendiyag}
$\Lambda_\beta^L(\theta,t,x) \ge t\beta + \theta x$ for every $\beta>0$, $\theta\in\mathbb{R}$, $t>0$ and $x\in\mathbb{R}$. 
\end{lemma}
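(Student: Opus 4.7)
The plan is to follow the heuristic in Section \ref{summ1} and lower-bound the expectation in \eqref{kosk1} by restricting the simple symmetric random walk to a single favorable event. Fix auxiliary parameters $h\in(0,1)$, $\ell\in\bN$ and $a\in(0,t)$, and choose $B>|x|$. Applying Lemma \ref{gayettehos} with the parameter $n'=[\epin]$ (independent of $t$), for $\bP$-a.e.\ $\omega$ and all sufficiently small $\epsilon$ there is an $h$-hill of the form $[x^*-\ell,x^*+\ell-1]$ with $x^*\in[n'(x-a),n'(x+a)]$; in particular $m:=|x^*-[\epin x]|\le\epin a+O(1)$ and $\epsilon x^*=x+O(a)$ uniformly in $\epsilon$.

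The key event I would work on is $E_\epsilon$, on which the walk follows the unique deterministic $m$-step nearest-neighbor path from $[\epin x]$ to $x^*$ and thereafter satisfies $X_i\in[x^*-\ell,x^*+\ell-1]$ for all $i\in[m,n]$, where $n:=[\epin t]$. The march has probability exactly $2^{-m}$; conditional on it, the Markov property, translation invariance, and Lemma \ref{appasakla} lower-bound the confinement probability by $\exp((n-m)\log\cos(\pi/(2\ell+1))+o(n-m))$. On $E_\epsilon$ one has $V(T_{X_i}\omega)\ge h$ for $i\in[m,n-1]$ (since the walk then sits inside the $h$-hill) and trivially $V\ge 0$ during the march, while $|X_n-x^*|\le\ell$.

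Multiplying the resulting lower bound for $E_{[\epin x]}\bigl[e^{\beta\sum V+\theta X_n}\bigr]$ by $\epsilon$ and then taking $\liminf_{\epsilon\to 0}$ produces, term by term, a lower bound of $-a\log 2+\beta h(t-a)+t\log\cos(\pi/(2\ell+1))+\theta x-|\theta|a$ for $\Lambda_\beta^L(\theta,t,x)$, where I use $\epsilon m\le a+o(1)$, $\epsilon(n-m)\le t$ together with $\log\cos(\pi/(2\ell+1))<0$, $\epsilon x^*=x+O(a)$, and the uniform bound $|X_n-x^*|\le\ell$. Sending first $\ell\to\infty$, then $h\uparrow 1$, and finally $a\downarrow 0$ yields $\Lambda_\beta^L(\theta,t,x)\ge t\beta+\theta x$.

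The argument presents no substantive obstacle: the three auxiliary parameters decouple, with $a$ paying for both the entropic cost of the deterministic march and the spatial error in $\theta x^*$, $\ell$ controlling the confinement penalty $\log\cos(\pi/(2\ell+1))\uparrow 0$, and $h\uparrow 1$ extracting the full potential reward $\beta t$. The one point requiring mild care is that Lemma \ref{gayettehos} locates the hill relative to the rescaled interval $[n'(x-a),n'(x+a)]$ rather than directly around $[\epin x]$; this is why one must take $n'$ of order $\epin$ and not $\epin t$, and why an $O(a)$ error enters the identity $\epsilon x^*\to x$.
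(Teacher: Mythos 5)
Your proposal matches the paper's proof essentially step for step: same favorable event (deterministic march to $x^*$, then confinement in the $h$-hill), same invocation of Lemmas~\ref{gayettehos} and~\ref{appasakla}, same bookkeeping of the entropic cost $\log 2$ per march step, the potential reward $\beta h$ during confinement, and the $O(a)$ error in $\theta X_{[\epin t]}$, and the same decoupled limits in $a$, $h$, $\ell$ (the order of the last three limits is immaterial since the bound separates in these parameters). Your remark about invoking Lemma~\ref{gayettehos} at scale $n'=[\epin]$ rather than $[\epin t]$ is a correct reading of that lemma and agrees with how the paper uses it implicitly.
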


\begin{proof}
For every $h\in(0,1)$, $\ell\in\mathbb{N}$, $t>0$, $x\in\mathbb{R}$, $a\in(0,t)$, $\mathbb{P}$-a.e.\ $\omega$ and sufficiently small $\ep>0$, Lemma \ref{gayettehos} implies the existence of an $h$-hill of the form $[x^* - \ell,x^* + \ell - 1]$ that is contained in the interval $[\epin(x-a),\epin(x+a)]$. Let $A_{[\epin t]}(x^*,h,\ell)$ be the event that the particle marches deterministically from $[\epin x]$
to $x^*$ and then spends the rest of the $[\epin t]$ units of time in this $h$-hill. Restricting on this event and applying Lemma \ref{appasakla}, we get the following lower bound:
\begin{align*}
E_{[\epin x]}\left[e^{\beta\sum_{i=0}^{[\epin t]-1}V(T_{X_i}\omega) + \theta X_{[\epin t]}}\right] &\ge E_{[\epin x]}\left[e^{\beta\sum_{i=0}^{[\epin t]-1}V(T_{X_i}\omega) + \theta X_{[\epin t]}}\one_{A_{[\epin t]}(x^*,h,\ell)}\right]\\
&\ge e^{([\epin t] - \epin a)\beta h + \theta[\epin x] - |\theta|\epin a}P_{[\epin x]}(A_{[\epin t]}(x^*,h,\ell))\\
&\ge e^{([\epin t] - \epin a)\beta h + \theta[\epin x] - (|\theta| + \log 2)\epin a + [\epin t]\log\cos(\pi/(2\ell + 1)) + o(\epin t)}.
\end{align*}
The desired result is obtained by first taking $\ep\log$ of both sides, then sending $\ep\to0$, and finally taking $a\to0$, $h\to 1$ and $\ell\to\infty$.
\end{proof}

Let $\tau_k = \inf\{i\ge0: X_i = k\}$ denote the first time the particle is at $k\in\mathbb{Z}$. For every $\lambda > \beta$, define
\begin{equation}
\begin{aligned}\label{pregaztap}
F_{\beta,\theta}^\lambda(\omega,1) &= - \log E_0\left[e^{\beta\sum_{i=0}^{\tau_1-1}V(T_{X_i}\omega) + \theta X_{\tau_1} - \lambda\tau_1}\one_{\{\tau_1<\infty\}}\right]\\
&= - \log E_0\left[e^{\beta\sum_{i=0}^{\tau_1-1}V(T_{X_i}\omega) - \lambda\tau_1}\one_{\{\tau_1<\infty\}}\right] - \theta.
\end{aligned}
\end{equation}
Note that 
\begin{equation}
\begin{aligned}\label{fbounds}
F_{\beta,\theta}^\lambda(\omega,1) & \ge - \log E_0\left[e^{(\beta - \lambda)\tau_1}\one_{\{\tau_1<\infty\}}\right] - \theta > (\lambda - \beta) - \theta\quad\text{and}\\
F_{\beta,\theta}^\lambda(\omega,1) & \le - \log E_0\left[e^{-\lambda\tau_1}\one_{\{\tau_1<\infty\}}\right] - \theta < \infty.
\end{aligned}
\end{equation}
Set $F_{\beta,\theta}^\lambda(\omega,-1) = - F_{\beta,\theta}^\lambda(T_{-1}\omega,1)$. 
Then, decomposing the expectation corresponding to $e^{-F_{\beta,\theta}^\lambda(\omega,1)}$ with respect to the first step of the RW, we see that
\begin{equation}\label{peeling}
e^\lambda  = \frac1{2}e^{\beta V(\omega) + \theta + F_{\beta,\theta}^\lambda(\omega,1)} + \frac1{2}e^{\beta V(\omega) - \theta + F_{\beta,\theta}^\lambda(\omega,-1)}
\end{equation}
for every $\omega\in\Omega$.

\begin{lemma}\label{decomp}
If $\theta\ge0$ and $\beta < \lambda < t^{-1}\left(\Lambda_\beta^U(\theta,t,x) - \theta x\right)$ for some $t>0$ and $x\in\mathbb{R}$, then $$\mathbb{E}[F_{\beta,\theta}^\lambda(\cdot,1)] \le \lambda  - t^{-1}\left(\Lambda_\beta^U(\theta,t,x) - \theta x\right) < 0.$$
\end{lemma}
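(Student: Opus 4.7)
The plan is to iterate the identity \eqref{peeling} via a martingale argument, extract the ergodic mean of $F_{\beta,\theta}^\lambda(\cdot,1)$ via Birkhoff's theorem, and then compare with $\Lambda_\beta^U$. Write $\Phi(\omega):=F_{\beta,\theta}^\lambda(\omega,1)$; by \eqref{fbounds}, $\Phi$ is bounded and hence integrable. The identity \eqref{peeling} is equivalent to saying that
\[ M_n := \exp\!\Bigl(\beta\sum_{i=0}^{n-1} V(T_{X_i}\omega) + \theta X_n - \lambda n + \sum_{i=0}^{n-1}F_{\beta,\theta}^\lambda(T_{X_i}\omega,z_{i+1})\Bigr) \]
is a $P_0$-martingale, so $E_0[M_n]=1$. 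The cocycle relation $F_{\beta,\theta}^\lambda(\omega,-1)=-F_{\beta,\theta}^\lambda(T_{-1}\omega,1)$ makes the last sum telescope along any nearest-neighbor path, so it depends only on the endpoint: it equals $\Psi(\omega,X_n)$, where $\Psi(\omega,k):=\sum_{j=0}^{k-1}\Phi(T_j\omega)$ for $k\ge 0$ and $\Psi(\omega,k):=-\sum_{j=k}^{-1}\Phi(T_j\omega)$ for $k<0$. Thus
\[ e^{n\lambda} = E_0\!\Bigl[\exp\!\Bigl(\beta\sum_{i=0}^{n-1} V(T_{X_i}\omega) + \theta X_n + \Psi(\omega,X_n)\Bigr)\Bigr]. \]

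Since $\Phi$ is bounded, Birkhoff's ergodic theorem applied to $T$ and to $T^{-1}$ gives $k^{-1}\Psi(\omega,k)\to\mathbb{E}[\Phi]$ as $|k|\to\infty$ for $\mathbb{P}$-a.e.\ $\omega$. Hence, for every $\e>0$ there is $K_\e(\omega)<\infty$ with $\Psi(\omega,k)\ge k\,\mathbb{E}[\Phi]-\e|k|-K_\e(\omega)$ for all $k\in\mathbb{Z}$. Inserting this into the identity above and using $|X_n|\le n$ gives
\[ e^{n\lambda+K_\e+\e n}\ge E_0\!\Bigl[\exp\!\Bigl(\beta\sum_{i=0}^{n-1} V(T_{X_i}\omega) + (\theta+\mathbb{E}[\Phi])X_n\Bigr)\Bigr]. \]
Set $\ell(\vartheta):=\limsup_n\frac1n\log E_0[e^{\beta\sum V+\vartheta X_n}]$; by the ergodicity/ellipticity remark after \eqref{kosk2}, $\ell$ is $\mathbb{P}$-a.s.\ constant and equals $t^{-1}(\Lambda_\beta^U(\vartheta,t,x)-\vartheta x)$. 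Taking $\frac1n\log$ of the previous display, letting $n\to\infty$, and then $\e\downarrow 0$ yields $\ell(\theta+\mathbb{E}[\Phi])\le\lambda$.

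It remains to convert this into the advertised bound on $\mathbb{E}[\Phi]$. For each $n$, the function $\vartheta\mapsto\frac1n\log E_0[e^{\beta\sum V+\vartheta X_n}]$ is convex (H\"older) and $1$-Lipschitz (since $|X_n|\le n$); both properties survive the $\limsup$, so $\ell$ is convex and $1$-Lipschitz on $\mathbb{R}$. Moreover $V\le 1$ gives $\ell(0)\le\beta$ while Lemma \ref{kendiyag} gives $\ell(0)\ge\beta$, so $\ell$ attains its minimum value $\beta$ at $\vartheta=0$ and is therefore non-decreasing on $[0,\infty)$ by convexity. The hypothesis $\theta\ge 0$ together with $\ell(\theta+\mathbb{E}[\Phi])\le\lambda<\ell(\theta)$ then forces $\mathbb{E}[\Phi]<0$, after which the $1$-Lipschitz property gives
\[ \ell(\theta)-\lambda \le \ell(\theta)-\ell(\theta+\mathbb{E}[\Phi]) \le |\mathbb{E}[\Phi]| = -\mathbb{E}[\Phi], \]
which is equivalent to the inequality in the lemma. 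The main subtlety is identifying $\ell(0)=\beta$ and deducing monotonicity of $\ell$ on $[0,\infty)$; once this is in hand, the rest is a bookkeeping combination of the martingale identity, Birkhoff's theorem, and Lipschitz continuity.
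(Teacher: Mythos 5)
Your martingale/telescoping setup, the Birkhoff bound $\Psi(\omega,k)\ge k\,\mathbb{E}[\Phi]-\e|k|-K_\e(\omega)$, and the convexity/Lipschitz observations about $\ell$ are all correct, and the final arithmetic is fine. The one step that does not hold up is the identification
\[
\ell(\vartheta)\;=\;t^{-1}\bigl(\Lambda_\beta^U(\vartheta,t,x)-\vartheta x\bigr),
\]
which you attribute to the ergodicity/ellipticity remark after \eqref{kosk2}. That remark says only that, for each \emph{fixed} $(\vartheta,t,x)$, the quantity $\Lambda_\beta^U(\vartheta,t,x,\omega)$ is $\mathbb{P}$-a.s.\ independent of $\omega$; it does not say that $\Lambda_\beta^U(\vartheta,t,x)=t\,\ell(\vartheta)+\vartheta x$. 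For $(t,x)=(1,0)$ the identification is trivially true (same sequence with $n=[\epin]$), but for $x\ne 0$ the walk starts at $[\epin x]$, and after shifting to a walk started at $0$ one is evaluating $n^{-1}\log E_0[\cdots]$ against the environments $T_{[\epin x]}\omega$. Since $|[\epin x]|$ grows like a constant multiple of $n=[\epin t]$, the $\limsup$ along this shifted sequence need not equal $\limsup_n n^{-1}\log E_0[\cdots](\omega)$: a.s.\ constancy in $\omega$ controls each fixed shift, not a shift growing linearly in $n$. Establishing $\Lambda_\beta^U(\theta,t,x)=t\Lambda_\beta(\theta)+\theta x$ for all $(t,x)$ is essentially the content of Theorem \ref{thmno}, for which this lemma is a stepping stone, so invoking it here is close to circular.

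The paper sidesteps this by never leaving the walk started at $[\epin x]$: it decomposes the expectation according to the last level $k$ reached (events $\{\tau_k<[\epin t]\le\tau_{k+1}\}$), uses $V\le 1$ and $\lambda>\beta$ to kill the tail, and then proves a \emph{uniform-in-offset} Birkhoff estimate, display \eqref{eq-cancun3}, controlling $\sum_{j=[\epin x]}^{[\epin x]+k-1}F^\lambda_{\beta,\theta}(T_j\omega,1)-k\,\mathbb{E}[F^\lambda_{\beta,\theta}(\cdot,1)]$ uniformly over $0\le k\le[\epin t]-1$. Your martingale route is a genuinely different (and arguably cleaner) decomposition and can be repaired in the same spirit: run the martingale identity for the walk started at $[\epin x]$, telescope the corrector sum to $\Psi(\omega,X_{[\epin t]})-\Psi(\omega,[\epin x])$, apply a uniform-in-offset version of your Birkhoff bound on the range $|k|\lesssim\epin(t+|x|)$, and then carry out the convexity/Lipschitz argument with $L(\vartheta):=\Lambda_\beta^U(\vartheta,t,x)-\vartheta x$ (which is convex, $t$-Lipschitz, and has minimum $t\beta$ at $\vartheta=0$) in place of $\ell$. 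As written, though, the step equating $\ell$ with $t^{-1}(\Lambda_\beta^U-\vartheta x)$ is a gap.
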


\begin{proof}
  \corOO{Fix} $\theta\ge0$ and 
  $\beta < \lambda < t^{-1}\left(\Lambda_\beta^U(\theta,t,x) - 
  \theta x\right)$ for some $t>0$ and $x\in\mathbb{R}$. 
  \corOO{Then, for $\mathbb{P}$-a.e.\ $\omega$, there exists 
  a subsequence $\epsilon_k\to 0$ and an error $o(\epsilon_k^{-1})$,
  both possibly depending on
$\beta$, $\theta$, $t$, $x$ and $\omega$, so that}
\begin{equation}
  \label{eq-cancun1}
	\corOO{
	  e^{\epin_k(\Lambda_\beta^U(\theta,t,x) - 
	  t\lambda - \theta x) + o(\epin_k)} = 
	  E_{[\epin_k x]}\left[e^{\beta\sum_{i=0}^{[\epin_k t]-1}V(T_{X_i}\omega) + \theta (X_{[\epin_k t]} - [\epin_k x]) - [\epin_k t]\lambda}\right].
	}
	\end{equation}
	\corOO{On the other hand, for any $\ep>0$,}
	\begin{align}
	  \label{eq-cancun2}
	  &E_{[\epin x]}\left[e^{\beta\sum_{i=0}^{[\epin t]-1}V(T_{X_i}\omega) + \theta (X_{[\epin t]} - [\epin x]) - [\epin t]\lambda}\right]\nonumber\\
	&= \sum_{k=0}^{[\epin t]-1}E_0\left[e^{\sum_{i=0}^{[\epin t]-1}[\beta V(T_{[\epin x] + X_i}\omega) - \lambda] + \theta X_{[\epin t]}}\one_{\{\tau_{k} < [\epin t] \le \tau_{k + 1}\}}\right]\nonumber\\
	&\le \sum_{k=0}^{[\epin t]-1}E_0\left[e^{\sum_{i=0}^{\tau_k-1}[\beta V(T_{[\epin x] + X_i}\omega) - \lambda] + \theta(k+1)}\one_{\{\tau_{k} < [\epin t] \le \tau_{k + 1}\}}\right]\nonumber\\
	&\le \sum_{k=0}^{[\epin t]-1}e^{\theta}E_0\left[e^{\sum_{i=0}^{\tau_k-1}[\beta V(T_{[\epin x] + X_i}\omega) - \lambda] + \theta X_{\tau_{k}}}\one_{\{\tau_{k} <\infty\}}\right]\nonumber\\
	&= \sum_{k=0}^{[\epin t]-1}e^{\theta - \sum_{j=[\epin x]}^{[\epin x]+k-1}F_{\beta,\theta}^\lambda(T_j\omega,1)} .
	\end{align}
	\corOO{We now claim that for any $\eta>0$ there is
	  an $\epsilon_0=\epsilon_0(\omega,\beta,\theta,\lambda,\eta)$ so that
	if $\ep\le\ep_0$ then for all $0\leq k\leq [\epin t]-1$,}
	\begin{equation}
	  \label{eq-cancun3}
	\corOO{\left|\sum_{j=[\epin x]}^{[\epin x]+k-1}F_{\beta,\theta}^\lambda(T_j\omega,1) - k\mathbb{E}[F_{\beta,\theta}^\lambda(\cdot,1)]\right|\le \epin\eta 
      (1+t + 2|x|).}
      \end{equation}
      \corOO{Combining \eqref{eq-cancun1}--\eqref{eq-cancun3}, we conclude that
      $0 < \Lambda_\beta^U(\theta,t,x) - t\lambda - \theta x \le - t\mathbb{E}[F_{\beta,\theta}^\lambda(\cdot,1)]$, which completes the proof of the lemma.}

      \corOO{It remains to prove \eqref{eq-cancun3}.}
	 Note that
	 the Birkhoff ergodic theorem gives an 
	 $n_0 = n_0(\omega,\beta,\theta,\lambda,\eta)$ such that 
	 $n\ge n_0$ implies 
	 $$\left|\sum_{j=0}^{n-1}F_{\beta,\theta}^\lambda(T_j\omega,1) - 
	 n\mathbb{E}[F_{\beta,\theta}^\lambda(\cdot,1)]\right|\le n\eta.$$
	 Hence, for $0\le k\le [\epin t]-1$,
\begin{equation}\label{sumdiff}
\left|\sum_{j=[\epin x]}^{[\epin x]+k-1}F_{\beta,\theta}^\lambda(T_j\omega,1) - k\mathbb{E}[F_{\beta,\theta}^\lambda(\cdot,1)]\right|\le \epin\eta (t + 2|x|)
\end{equation}
whenever $|[\epin x]| \ge n_0$ and $|[\epin x]+k| \ge n_0$. 
Otherwise, we can shift the indices of the sum in 
\eqref{sumdiff} by $2n_0$, recall 
\eqref{fbounds} and use the triangle inequality to deduce that
\corOO{the left side of \eqref{sumdiff} is bounded by 
$\epin\eta (t + 2|x|)+4n_0\|F^\lambda_{\beta,\theta}(\cdot,1)\|_\infty$.
Choosing $\epsilon_0<\eta/(4 n_0\|F^\lambda_{\beta,\theta}(\cdot,1)\|_\infty)$
then gives \eqref{eq-cancun3}. }
\end{proof}

\begin{lemma}\label{lemincrease}
If $\theta\ge0$, then the map $\lambda\mapsto \mathbb{E}[F_{\beta,\theta}^\lambda(\cdot,1)]$ is continuous and strictly increasing for $\lambda > \beta$. Moreover,
$$\lim_{\lambda\to\infty}\mathbb{E}[F_{\beta,\theta}^\lambda(\cdot,1)] = \infty.$$
\end{lemma}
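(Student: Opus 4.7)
The plan is to strip off $\theta$ by observing that $F^\lambda_{\beta,\theta}(\omega,1)+\theta$ does not depend on $\theta$ (since $X_{\tau_1}=1$), so it suffices to prove the three assertions for
$$h(\lambda,\omega):=-\log E_0\!\left[e^{\beta\sum_{i=0}^{\tau_1-1}V(T_{X_i}\omega)-\lambda\tau_1}\one_{\{\tau_1<\infty\}}\right].$$
Expanding the expectation according to the (necessarily odd) value of $\tau_1$ yields the Dirichlet-type representation
$$e^{-h(\lambda,\omega)}=\sum_{n\ge 1,\ n\text{ odd}}a_n(\omega)\,e^{-\lambda n},\qquad a_n(\omega):=E_0\!\left[e^{\beta\sum_{i=0}^{n-1}V(T_{X_i}\omega)}\one_{\{\tau_1=n\}}\right],$$
with each $a_n(\omega)\ge P_0(\tau_1=n)>0$ since $V\ge 0$. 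This single representation drives the entire proof.

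For strict monotonicity, the series has strictly positive coefficients for every odd $n$ and hence is strictly decreasing in $\lambda$ pointwise in $\omega$; so $h(\cdot,\omega)$, and therefore $F^\lambda_{\beta,\theta}(\omega,1)$, is strictly increasing on $(\beta,\infty)$, and strict inequality is preserved upon taking $\mathbb{P}$-expectation. For continuity at a fixed $\lambda_0>\beta$, I would pick $\eta\in(0,\lambda_0-\beta)$ and apply dominated convergence twice. First, under $E_0$: on $[\lambda_0-\eta,\lambda_0+\eta]$ the integrand in the definition of $e^{-h(\lambda,\omega)}$ is uniformly dominated by $e^{(\beta-(\lambda_0-\eta))\tau_1}\one_{\{\tau_1<\infty\}}$, which is $E_0$-integrable, giving pointwise continuity of $h(\cdot,\omega)$. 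Second, under $\mathbb{P}$: the deterministic bounds
$$(\lambda_0-\eta-\beta)-\theta<F^\lambda_{\beta,\theta}(\omega,1)<-\log E_0\!\left[e^{-(\lambda_0-\eta)\tau_1}\one_{\{\tau_1<\infty\}}\right]-\theta$$
supplied by \eqref{fbounds} furnish a uniform-in-$(\lambda,\omega)$ dominating envelope, so continuity passes to $\mathbb{E}[F^\lambda_{\beta,\theta}(\cdot,1)]$.

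The divergence is immediate from \eqref{fbounds}: $F^\lambda_{\beta,\theta}(\omega,1)>(\lambda-\beta)-\theta$ pointwise in $\omega$, so $\mathbb{E}[F^\lambda_{\beta,\theta}(\cdot,1)]\ge(\lambda-\beta)-\theta\to\infty$ as $\lambda\to\infty$. The only subtle point, and thus the main place care is required, is verifying the dominating-function estimates underpinning the two applications of dominated convergence; both are handled by restricting $\lambda$ to a compact subinterval of $(\beta,\infty)$ and invoking the uniform deterministic bounds already recorded in \eqref{fbounds}.
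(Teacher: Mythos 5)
Your proposal is correct and follows essentially the same strategy the paper uses: pointwise strict monotonicity in $\omega$, the uniform deterministic bounds from \eqref{fbounds} together with dominated convergence for continuity, and the lower bound in \eqref{fbounds} for divergence. The only substantive difference is cosmetic: for monotonicity the paper directly writes $F_{\beta,\theta}^{\lambda+\Delta\lambda}(\omega,1)\ge F_{\beta,\theta}^{\lambda}(\omega,1)+\Delta\lambda$ from $\tau_1\ge 1$ (which simultaneously yields strict increase and divergence), whereas you pass through the Dirichlet-series expansion $\sum_{n\ \mathrm{odd}} a_n(\omega)e^{-\lambda n}$ with $a_n(\omega)>0$; note that your expansion also gives the paper's inequality immediately, since $n\ge 1$. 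One small slip: in your second dominated-convergence step the uniform upper bound over $\lambda\in[\lambda_0-\eta,\lambda_0+\eta]$ should read $-\log E_0\bigl[e^{-(\lambda_0+\eta)\tau_1}\one_{\{\tau_1<\infty\}}\bigr]-\theta$ rather than $\lambda_0-\eta$, because the right-hand side of \eqref{fbounds} is increasing in $\lambda$; this is harmless since any constant over the compact interval is integrable, but the displayed envelope as written does not dominate for $\lambda>\lambda_0-\eta$.
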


\begin{proof}
Note that $F_{\beta,\theta}^{\lambda + \Delta\lambda}(\omega,1) \ge F_{\beta,\theta}^\lambda(\omega,1) + \Delta\lambda$, since $\tau_1\ge1$. The rest follows from the uniform (in $\omega$) bounds in \eqref{fbounds} and the dominated convergence theorem.
\end{proof}


\begin{proof}[Proof of Theorem \ref{thmno}]
Assume without loss of generality that $\theta\ge0$.

If $\Lambda_\beta^U(\theta,t,x) = t\beta + \theta x$ for every $t>0$ and $x\in\mathbb{R}$, then Lemma \ref{kendiyag} gives
\begin{equation}\label{begbit}
\Lambda_\beta^L(\theta,t,x) = \Lambda_\beta^U(\theta,t,x) = t\beta + \theta x.
\end{equation}

If $\Lambda_\beta^U(\theta,t',x') > t'\beta + \theta x'$ for some $t'>0$ and $x'\in\mathbb{R}$, then 
Lemmas \ref{decomp} and \ref{lemincrease} (and the intermediate value theorem) imply the existence of a unique $\lambda \ge (t')^{-1}\left(\Lambda_\beta^U(\theta,t',x') - \theta x'\right)$ such that $\mathbb{E}[F_{\beta,\theta}^\lambda(\cdot,1)] = 0$. Then, $F_{\beta,\theta}^\lambda$ is a bounded and centered cocycle (see Definition \ref{cencocdef} in Appendix \ref{app_cocycle}).
Therefore, for every $t>0$, $x\in\mathbb{R}$ and $\mathbb{P}$-a.e.\ $\omega$,
\begin{align*}
&E_{[\epin x]}\left[e^{\beta\sum_{i=0}^{[\epin t]-1}V(T_{X_i}\omega) + \theta X_{[\epin t]}}\right]\\
&\ \;  = E_0\left[e^{\sum_{i=0}^{[\epin t]-1}[\beta V(T_{[\epin x] + X_i}\omega) + \theta Z_{i+1} + F_{\beta,\theta}^\lambda(T_{[\epin x] + X_i}\omega,Z_{i+1})]}\right]e^{\theta[\epin x] + o(\epin t)}\\
&\ \; = e^{[\epin t]\lambda + \theta[\epin x] + o(\epin t)} = e^{\epin(t\lambda + \theta x) + o(\epin t)}.
\end{align*}
Here, the first equality follows from the uniformly sublinear (in $[\epin t]$) growth of sums (over nearest-neighbor paths of length $[\epin t]$) of bounded and centered cocycles (see Lemma \ref{cencoclem}) and the last equality is obtained by the repeated application of (\ref{peeling}). Taking $\ep\log$ of both sides and sending $\ep\to0$, we conclude that \begin{equation}\label{gonbit}
\Lambda_\beta^L(\theta,t,x) = \Lambda_\beta^U(\theta,t,x) = t\lambda + \theta x.
\end{equation}

The existence of the limit in \eqref{homlimit} and the validity of the identity in \eqref{simdigel} follow immediately from \eqref{begbit} and \eqref{gonbit}. Finally, setting $t=1$ and $x=0$, we deduce \eqref{nolimit}.
\end{proof}

\subsection{The corrector and an implicit formula}\label{korgozu}

When $\theta > 0$ and $\Lambda_\beta(\theta) > \beta$, we will henceforth write $F_{\beta,\theta} = F_{\beta,\theta}^{\Lambda_\beta(\theta)}$ to simplify the notation in the previous section. We extend this definition to the $\theta < 0$ case and recapitulate it as follows:
\begin{equation}
\begin{aligned}\label{gaztap}
F_{\beta,\theta}(\omega,1) &= - \log E_0\left[e^{\beta\sum_{i=0}^{\tau_{1}-1}V(T_{X_i}\omega) - \Lambda_\beta(\theta)\tau_{1}}\one_{\{\tau_{1}<\infty\}}\right] - \theta\quad\text{and}\\ F_{\beta,\theta}(\omega,-1) &= -F_{\beta,\theta}(T_{-1}\omega,1)\quad\text{if $\theta > 0$ and $\Lambda_\beta(\theta) > \beta$;}\\
F_{\beta,\theta}(\omega,-1) &= - \log E_0\left[e^{\beta\sum_{i=0}^{\tau_{-1}-1}V(T_{X_i}\omega) - \Lambda_\beta(\theta)\tau_{-1}}\one_{\{\tau_{-1}<\infty\}}\right] + \theta\quad\text{and}\\
F_{\beta,\theta}(\omega,1) &= -F_{\beta,\theta}(T_{1}\omega,-1)\quad\text{if $\theta < 0$ and $\Lambda_\beta(\theta) > \beta$.}
\end{aligned}
\end{equation}
Note that this definition leaves out $\theta = 0$ because $\Lambda_\beta(0) = \beta$ (see Proposition \ref{temelsaf}(c)). We record the following results for future reference.

\begin{proposition}\label{cikmazde}
	Assume \eqref{ass_wlog}, \eqref{ass_pan},
	\corO{and that $\theta$ is such that
	$\Lambda_\beta(\theta) > \beta$.} Then $F_{\beta,\theta}:\Omega\times\{-1,1\}\to\mathbb{R}$, defined in \eqref{gaztap}, satisfies
	\begin{equation}\label{melburn}
	\mathbb{E}[F_{\beta,\theta}(\cdot,\pm 1)] = 0,
	\end{equation}
	i.e., it is a centered cocycle (see Definition \ref{cencocdef} in Appendix \ref{app_cocycle}). Moreover, for every $\omega\in\Omega$,
	\begin{align}
	0 < \Lambda_\beta(\theta) - \beta &< |\theta| + F_{\beta,\theta}(\omega,\text{sgn}(\theta)) \le - \log E_0\left[e^{-\Lambda_\beta(\theta)\tau_1}\one_{\{\tau_1<\infty\}}\right] < \infty\quad\text{and}\label{fboundsrec}\\
    e^{\Lambda_\beta(\theta)} &= \frac1{2}e^{\beta V(\omega) + \theta + F_{\beta,\theta}(\omega,1)} + \frac1{2}e^{\beta V(\omega) - \theta + F_{\beta,\theta}(\omega,-1)}.\label{menzilde}
    \end{align}
\end{proposition}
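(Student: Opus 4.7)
The plan is to observe that the proposition largely repackages identities that were already established during the proof of Theorem \ref{thmno}, specialized to the unique value $\lambda = \Lambda_\beta(\theta)$ identified there. I would first confirm the match of definitions: under the assumption $\Lambda_\beta(\theta) > \beta$, the definition in \eqref{gaztap} coincides (for $\theta > 0$) with $F_{\beta,\theta}^{\Lambda_\beta(\theta)}$ from \eqref{pregaztap}; the $\theta < 0$ case is handled by the symmetric definition involving $\tau_{-1}$, with the extension to the other direction fixed by the reflection relation $F_{\beta,\theta}(\omega,-1) = -F_{\beta,\theta}(T_{-1}\omega,1)$ (respectively $F_{\beta,\theta}(\omega,1) = -F_{\beta,\theta}(T_{1}\omega,-1)$).

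With that matching in hand, the bounds in \eqref{fboundsrec} become immediate from \eqref{fbounds} evaluated at $\lambda = \Lambda_\beta(\theta)$, with the lower strict inequality $\Lambda_\beta(\theta)-\beta > 0$ being simply the hypothesis. The Bellman-type identity \eqref{menzilde} is precisely \eqref{peeling} at the same value of $\lambda$, which was itself obtained by decomposing the expectation in $e^{-F^\lambda_{\beta,\theta}(\omega,1)}$ according to the first step of the walk; this step is purely algebraic and requires no new work.

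The substantive content is the centering $\mathbb{E}[F_{\beta,\theta}(\cdot,1)] = 0$. For this I would invoke the machinery already developed: under $\Lambda_\beta(\theta) > \beta$, we are in the second branch of the dichotomy \eqref{begbit} versus \eqref{gonbit} in the proof of Theorem \ref{thmno}. There, Lemma \ref{decomp} together with the continuity and strict monotonicity of $\lambda \mapsto \mathbb{E}[F^\lambda_{\beta,\theta}(\cdot,1)]$ from Lemma \ref{lemincrease} and the intermediate value theorem produced a unique $\lambda > \beta$ at which this expectation vanishes; this $\lambda$ was then identified with $\Lambda_\beta(\theta)$ by matching with \eqref{gonbit}. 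Specializing to $\lambda = \Lambda_\beta(\theta)$ yields \eqref{melburn} for the argument $+1$, and the corresponding statement for the argument $-1$ follows from the reflection relation and the $T$-invariance of $\mathbb{P}$.

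Finally, the cocycle property in the sense of Definition \ref{cencocdef} is encoded directly in the defining relation $F_{\beta,\theta}(T_1\omega,-1) + F_{\beta,\theta}(\omega,1) = 0$, so nothing remains beyond verifying the appendix's convention. I do not anticipate a genuine obstacle; the one point requiring care is to handle the two sign cases $\theta > 0$ and $\theta < 0$ consistently, and to verify that the hypothesis $\Lambda_\beta(\theta) > \beta$ does place us in the nontrivial branch of the dichotomy. Invoking the reflection symmetry $z \mapsto -z$, $\theta \mapsto -\theta$, $T \mapsto T^{-1}$ (under which $\mathbb{P}$ need not be invariant but the statement of the proposition is preserved because all objects transform compatibly) reduces the two sign cases to a single argument.
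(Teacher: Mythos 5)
Your proof is correct and follows essentially the same route as the paper: identifying $F_{\beta,\theta}$ with $F_{\beta,\theta}^{\Lambda_\beta(\theta)}$, reading \eqref{fboundsrec} and \eqref{menzilde} off from \eqref{fbounds} and \eqref{peeling} at $\lambda = \Lambda_\beta(\theta)$, extracting the centering \eqref{melburn} from the dichotomy in the proof of Theorem~\ref{thmno} (via Lemmas~\ref{decomp} and~\ref{lemincrease} and the intermediate value theorem), and handling $\theta < 0$ by the mirror argument using the symmetry of SSRW. The only cosmetic point is that since $T$ is an invertible measure-preserving transformation, $\mathbb{P}$ \emph{is} invariant under $T^{-1}$ (what need not hold is invariance under the spatial reflection $\omega_y \mapsto \omega_{-y}$); this does not affect your conclusion, since the $\theta>0$ argument applies verbatim to the ergodic system $(\Omega,\mathcal{F},\mathbb{P},T^{-1})$.
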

\begin{proof}
\corO{  The equality \eqref{melburn} follows from the definition of
$\Lambda_\beta(\theta)$, building on the proof of Theorem \ref{thmno}.}
When $\theta > 0$, the desired results \eqref{fboundsrec} and
\eqref{menzilde} have been shown in \eqref{fbounds} and \eqref{peeling}, respectively.
When $\theta < 0$, the proofs are identical since the law of the underlying SSRW is symmetric.
\end{proof}

In light of the exact equality in \eqref{menzilde}, $F_{\beta,\theta}$ is referred to as the corrector. We will say more about this choice of terminology in Appendix \ref{app_varfor} (see Remark \ref{correctorname}) where we present and analyze two variational formulas for $\Lambda_\beta(\theta)$. The following result gives an implicit (non-variational) formula for $\Lambda_\beta(\theta)$.

\begin{proposition}\label{propimpfor}
	Assume \eqref{ass_wlog} and \eqref{ass_pan}. Then, $\Lambda_\beta(\theta) \ge \beta$ for every $\theta\in\mathbb{R}$. Moreover,
	\begin{equation}\label{impfor}
	\mathbb{E}\left[\log E_0\left[e^{\beta\sum_{i=0}^{\tau_1-1}V(T_{X_i}\omega) - \Lambda_\beta(\theta)\tau_1}\one_{\{\tau_1<\infty\}}\right]\right] + |\theta| = 0
	\end{equation}
	whenever $\Lambda_\beta(\theta) > \beta$.
\end{proposition}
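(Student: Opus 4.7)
The plan is to extract both assertions from the machinery already built in Section \ref{existtilt}. For the inequality $\Lambda_\beta(\theta)\ge\beta$, I would simply specialize Lemma \ref{kendiyag} to $(t,x)=(1,0)$: this gives $\Lambda_\beta^L(\theta,1,0)\ge\beta$, and Theorem \ref{thmno} identifies $\Lambda_\beta^L(\theta,1,0)=\Lambda_\beta^U(\theta,1,0)=\Lambda_\beta(\theta)$, so the inequality follows for every $\theta\in\mathbb{R}$.

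For the implicit formula under the hypothesis $\Lambda_\beta(\theta)>\beta$, the main point is that \eqref{impfor} is nothing but the centering condition on the corrector, rewritten. For $\theta>0$, the first line of \eqref{gaztap} rearranges to
\[
F_{\beta,\theta}(\omega,1)+\theta=-\log E_0\!\left[e^{\beta\sum_{i=0}^{\tau_1-1}V(T_{X_i}\omega)-\Lambda_\beta(\theta)\tau_1}\one_{\{\tau_1<\infty\}}\right],
\]
and integrating against $\mathbb{P}$ and invoking $\mathbb{E}[F_{\beta,\theta}(\cdot,1)]=0$ from \eqref{melburn} yields \eqref{impfor} because $|\theta|=\theta$. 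For $\theta<0$, the parallel construction in \eqref{gaztap} defines $F_{\beta,\theta}(\omega,-1)$ through $\tau_{-1}$, and the same integration together with $\mathbb{E}[F_{\beta,\theta}(\cdot,-1)]=0$ produces the analogue of \eqref{impfor} with $\tau_{-1}$ and $-\theta=|\theta|$ in place of $\tau_1$ and $\theta$.

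The one step that takes a bit of care is identifying the $\theta<0$ identity with \eqref{impfor} as written, i.e., rewriting the $\tau_{-1}$-expectation in terms of $\tau_1$. The natural device is the symmetry of simple symmetric random walk under $X_i\mapsto-X_i$, which sets up a measure-preserving bijection between paths that first hit $+1$ and paths that first hit $-1$ of the same length; combined with the $\mathbb{P}$-a.s.\ constancy of $\Lambda_\beta(\theta)$ and the uniqueness of the centering level from Lemma \ref{lemincrease}, this should close the loop. The conceptual content of the proposition is already packaged in Proposition \ref{cikmazde}; what remains is essentially bookkeeping.
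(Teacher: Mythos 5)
Your argument for $\Lambda_\beta(\theta)\ge\beta$ and for \eqref{impfor} when $\theta>0$ is exactly the paper's: Lemma \ref{kendiyag} with $(t,x)=(1,0)$ for the inequality, and \eqref{gaztap} together with the centering \eqref{melburn} for the identity. The divergence is in how you handle $\theta<0$.

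The paper's route is shorter than yours and sidesteps $\tau_{-1}$ entirely: it invokes the evenness $\Lambda_\beta(\theta)=\Lambda_\beta(-\theta)$ (recorded, with a forward pointer, in Proposition \ref{temelsaf}(a)), so that $\Lambda_\beta(\theta)=\Lambda_\beta(|\theta|)>\beta$, and then simply applies the already established $\theta>0$ case at $|\theta|$. That produces \eqref{impfor} as written, with $\tau_1$.

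Your route, by contrast, starts from the $\tau_{-1}$-version of \eqref{gaztap}--\eqref{melburn} and then attempts to convert the $\tau_{-1}$-expectation into a $\tau_1$-expectation via the reflection $X_i\mapsto-X_i$. This is where the gap sits. The reflection is indeed measure-preserving for the SSRW law and sends first-hit-$(-1)$ paths to first-hit-$(+1)$ paths, but it also transforms the environment seen along the path: $V(T_{X_i}\omega)$ becomes $V(T_{-X_i}\omega)$. After taking $\mathbb{E}\log$ one is left comparing $\mathbb{E}\bigl[\log E_0\bigl[e^{\beta\sum_{i<\tau_1}V(T_{X_i}\omega)-\lambda\tau_1}\one\bigr]\bigr]$ with $\mathbb{E}\bigl[\log E_0\bigl[e^{\beta\sum_{i<\tau_1}V(T_{-X_i}\omega)-\lambda\tau_1}\one\bigr]\bigr]$; the first depends only on $\bigl(V(T_y\omega)\bigr)_{y\le0}$ and the second only on $\bigl(V(T_y\omega)\bigr)_{y\ge0}$, and under a general stationary ergodic $\mathbb{P}$ (which is $T$-invariant but not assumed reflection-invariant) these one-sided laws can differ -- think of an irreversible stationary Markov potential. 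The extra ingredients you adduce, a.s.\ constancy of $\Lambda_\beta$ and monotonicity from Lemma \ref{lemincrease}, are precisely what one would use to deduce evenness from the equality of these two $\mathbb{E}\log$ quantities, not to establish that equality in the first place; so your loop is not yet closed. In effect, closing it is equivalent to proving the evenness of $\Lambda_\beta$, so nothing is gained by avoiding that statement. The clean fix is to cite Proposition \ref{temelsaf}(a) and run your own $\theta>0$ argument at $|\theta|$, which is precisely what the paper does.
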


\begin{proof}
	We already know from Lemma \ref{kendiyag} (with $t=1$ and $x=0$) that $\Lambda_\beta(\theta) \ge \beta$. The symmetry of the law of SSRW implies that $\Lambda_\beta(\theta)$ is even in $\theta$. (We will list various properties of the tilted free energy in Proposition \ref{temelsaf} below.) Therefore, \eqref{impfor} follows from \eqref{gaztap} and \eqref{melburn} 
	whenever $\Lambda_\beta(\theta) > \beta$.
\end{proof}

\subsection{Some properties of the tilted free energy}


\begin{proposition}\label{temelsaf}
	Assume \eqref{ass_wlog} and \eqref{ass_pan}. Then, the following hold.
	\begin{itemize}
		\item [(a)] $\Lambda_\beta(\theta)$ is increasing in $\beta$, and even and convex in $\theta$.
		\item [(b)] $\Lambda_\beta(\theta) \ge \max\left\{\beta,\beta\mathbb{E}[V(\cdot)] + \log\cosh(\theta)\right\}$ for every $\theta\in\mathbb{R}$. 
		\item [(c)] If $|\theta| \le\beta(1 - \mathbb{E}[V(\cdot)])$, then $\Lambda_\beta(\theta) = \beta$. Hence, the set $\{\theta\in\mathbb{R}:\,\Lambda_\beta(\theta) = \beta\}$ is a symmetric and closed interval with nonempty interior.
		\item [(d)] $\Lambda_\beta(\theta) - \log\cosh(\theta) \to \beta\mathbb{E}[V(\cdot)]$ as $|\theta|\to\infty$.
		\item [(e)] The map $\theta\mapsto\Lambda_\beta(\theta)$ is continuously differentiable on the complement of $\{\theta\in\mathbb{R}:\,\Lambda_\beta(\theta) = \beta\}$.
	\end{itemize}
\end{proposition}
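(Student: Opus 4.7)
For (a), monotonicity in $\beta$ is immediate from $V\ge 0$, which makes the integrand in \eqref{beforelimit} monotone increasing in $\beta$. Evenness in $\theta$ follows by combining the $X\mapsto -X$ symmetry of SSRW (which yields $E_0[e^{\beta\sum V(T_{X_i}\omega)-\theta X_n}]=E_0[e^{\beta\sum V(T_{-X_i}\omega)+\theta X_n}]$ at fixed $\omega$) with the observation that the right-hand side is a tilted free energy for the ergodic measure-preserving system $(\Omega,\mathcal{F},T^{-1},\mathbb{P})$, whose $\frac{1}{n}\log$-limit is again $\mathbb{P}$-a.s.\ constant. Convexity in $\theta$ is a one-line application of H\"older's inequality: for $\lambda\in(0,1)$,
\begin{equation*}
E_0\!\left[e^{\beta\sum V(T_{X_i}\omega)+(\lambda\theta_0+(1-\lambda)\theta_1)X_n}\right]\le E_0\!\left[e^{\beta\sum V+\theta_0 X_n}\right]^\lambda E_0\!\left[e^{\beta\sum V+\theta_1 X_n}\right]^{1-\lambda},
\end{equation*}
followed by $\frac{1}{n}\log$ and $n\to\infty$.

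For (b), the bound $\Lambda_\beta(\theta)\ge\beta$ is already contained in Lemma \ref{kendiyag} at $(t,x)=(1,0)$. For $\Lambda_\beta(\theta)\ge\beta\mathbb{E}[V(\cdot)]+\log\cosh(\theta)$, I would perform the exponential change of measure $\mathrm{d}\tilde P_0/\mathrm{d}P_0=e^{\theta X_n}/(\cosh(\theta))^n$, under which $(X_i)$ is a RW with drift $\tanh(\theta)$, giving the identity $E_0[e^{\beta\sum V+\theta X_n}]=(\cosh(\theta))^n\,\tilde E_0[e^{\beta\sum V(T_{X_i}\omega)}]$. Jensen's inequality $\log\tilde E_0[e^Y]\ge\tilde E_0[Y]$ combined with Birkhoff's theorem along the ballistic tilted walk (giving $\frac{1}{n}\tilde E_0[\sum V(T_{X_i}\omega)]\to\mathbb{E}[V(\cdot)]$ for $\mathbb{P}$-a.e.\ $\omega$) finishes the estimate.

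For both (c) and (d), I would first establish the sharper upper bound $\Lambda_\beta(\theta)\le\max\{\beta,\beta\mathbb{E}[V(\cdot)]+\log\cosh(\theta)\}$ by splitting on $A_\varepsilon:=\{|X_n|\le\varepsilon n\}$ for small $\varepsilon>0$. On $A_\varepsilon$ use $V\le 1$ and $|\theta X_n|\le\varepsilon|\theta|n$ to bound the contribution by $e^{(\beta+\varepsilon|\theta|)n}$; on $A_\varepsilon^c$ the walk is ballistic and visits $\Theta(n)$ distinct sites, so a Birkhoff-along-the-path argument forces $\sum V(T_{X_i}\omega)=(\mathbb{E}[V(\cdot)]+o(1))n$ with overwhelming $P_0$-probability, contributing at most $e^{(\beta\mathbb{E}[V(\cdot)]+o(1))n}(\cosh(\theta))^n$. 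Sending $\varepsilon\to 0$ yields the claim. Combined with (b): when $|\theta|\le\beta(1-\mathbb{E}[V(\cdot)])$, the bound $\log\cosh(\theta)\le|\theta|\le\beta(1-\mathbb{E}[V(\cdot)])$ forces the first branch to dominate, so $\Lambda_\beta(\theta)=\beta$ (proving (c)); as $|\theta|\to\infty$, the second branch eventually dominates and $\Lambda_\beta(\theta)-\log\cosh(\theta)\to\beta\mathbb{E}[V(\cdot)]$ (proving (d)).

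For (e), convexity from (a) already ensures one-sided differentiability everywhere and differentiability off an at-most-countable set; to upgrade to $C^1$ on the complement of the flat region $\{\Lambda_\beta=\beta\}$, I would invoke Proposition \ref{propimpfor}. On this open set $\Lambda=\Lambda_\beta(\theta)$ is the unique $\Lambda>\beta$ satisfying $G(\Lambda)+|\theta|=0$, where
\begin{equation*}
G(\Lambda):=\mathbb{E}\!\left[\log E_0\!\left[e^{\beta\sum_{i=0}^{\tau_1-1}V(T_{X_i}\omega)-\Lambda\tau_1}\one_{\{\tau_1<\infty\}}\right]\right];
\end{equation*}
the uniform bounds in \eqref{fboundsrec} together with dominated convergence make $G$ strictly decreasing and $C^1$ in $\Lambda$, and since by (c) the flat region contains a neighborhood of $0$, the complement avoids $\theta=0$, so $\theta\mapsto|\theta|$ is $C^1$ there as well and the implicit function theorem concludes. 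The principal technical obstacle I anticipate is the Birkhoff-along-the-path estimate used in (c)/(d): it requires uniformity in the terminal position $X_n=x$ across $\{|x|\ge\varepsilon n\}$, which should yield to a quenched LDP / occupation-measure concentration argument exploiting that a walk with $|X_n|\ge\varepsilon n$ traverses a ballistically growing interval on which the spatial ergodic theorem for $V$ applies.
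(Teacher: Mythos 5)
Your parts (a), (b), and (e) track the paper's proof closely. For (a) you use the same three tools ($V\ge 0$, the reflection symmetry of SSRW, H\"older). For (b) you perform the same exponential change of measure and apply Jensen; the only thing you gloss over is why $\frac1n\hat E_0^\theta\bigl[\sum_{i<n} V(T_{X_i}\omega)\bigr]\to\mathbb{E}[V]$ $\mathbb{P}$-a.s.: this requires that $\mathbb{P}$ be not just invariant but \emph{ergodic} for the environment chain under the tilted walk, which the paper justifies via Kozlov's lemma. For (e) you invoke Proposition \ref{propimpfor}, the DCT bounds from \eqref{fboundsrec}, and the implicit function theorem exactly as the paper does, including the observation that the flat region contains a neighborhood of $0$ so that $|\theta|$ is smooth on its complement.

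Parts (c) and (d) contain a genuine gap. You propose the ``sharper'' upper bound $\Lambda_\beta(\theta)\le\max\{\beta,\,\beta\mathbb{E}[V(\cdot)]+\log\cosh\theta\}$, which, combined with part (b), would force the identity $\Lambda_\beta(\theta)=\max\{\beta,\,\beta\mathbb{E}[V(\cdot)]+\log\cosh\theta\}$. This cannot be true in general: it would make $\Lambda_\beta$ depend on the law of the potential only through $\mathbb{E}[V(\cdot)]$, which contradicts the implicit formula in Proposition \ref{propimpfor} and the derivative computation in Appendix \ref{app_isimyok}. The specific step that fails is the ``Birkhoff-along-the-path'' concentration you invoke on $A_\varepsilon^c=\{|X_n|>\varepsilon n\}$. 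Even on that event, the path is free to spend a $(1-\varepsilon')$-fraction of its $n$ steps oscillating inside a long $h$-hill with $h$ close to $1$ (assumption \eqref{ass_pan} guarantees such hills exist), and then march ballistically to the right; by Lemma \ref{appasakla} the $P_0$-cost of the confinement is only of order $e^{(1-\varepsilon')n\log\cos(\pi/(2\ell+1))}$, which vanishes as $\ell\to\infty$, while the potential reward is of order $e^{\beta(1-\varepsilon')nh}$. So $\frac1n\sum_{i<n}V(T_{X_i}\omega)$ does \emph{not} concentrate at $\mathbb{E}[V(\cdot)]$ under the conditioned law, and the contribution to $E_0[\,\cdot\,\one_{A_\varepsilon^c}]$ is not controlled by $e^{(\beta\mathbb{E}[V]+o(1))n}(\cosh\theta)^n$. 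The paper avoids this entirely with a deterministic, per-path bound: since the walk visits every site between $0$ and $X_n$ at least once, one has $\sum_{i<n}V(T_{X_i}\omega)\le (n-|X_n|)+\sum_{x\text{ between }0\text{ and }X_n}V(T_x\omega)$ (using $V\le 1$ on the revisits), and then the spatial Birkhoff theorem applied to the range term gives $\beta\sum_i V(T_{X_i}\omega)+\theta X_n\le n\beta+\bigl(|\theta|-\beta(1-\mathbb{E}[V])\bigr)|X_n|+o(|X_n|)$. Taking $E_0$ yields the weaker, but correct, bound $\Lambda_\beta(\theta)\le\beta+\log\cosh\bigl(|\theta|-\beta(1-\mathbb{E}[V])\bigr)$ for $|\theta|\ge\beta(1-\mathbb{E}[V])$, which is all that (c) and (d) require. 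Replace your $A_\varepsilon$ decomposition by this pathwise estimate and (c), (d) go through.
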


\begin{proof}
	\begin{itemize}
		\item [(a)] These three properties follow from $V(\cdot)\ge0$, the symmetry of the law of SSRW and a standard application of H\"older's inequality, respectively.
		\item [(b)] Consider the nearest-neighbor RW with probability $p(\theta) = e^\theta/(e^\theta + e^{-\theta})$ of jumping to the right.
		It induces a probability measure $\hat P_0^\theta$ on paths starting at $0$. Let $\hat E_0^\theta$ denote expectation under $\hat P_0^\theta$. Note that
		$$\mathbb{E}[p(\theta)f(T_1\cdot) + (1-p(\theta))f(T_{-1}\cdot)] = p(\theta)\mathbb{E}[f(T_1\cdot)] + (1-p(\theta))\mathbb{E}[f(T_{-1}\cdot)] = \mathbb{E}[f(\cdot)]$$
		for every bounded and $\mathcal{F}$-measurable function $f:\Omega\to\mathbb{R}$. In other words, under $\hat P_0^\theta$, 
		the probability measure $\mathbb{P}$ is invariant for 
		the so-called environment Markov chain 
		$(T_{X_i}\omega)_{i\ge0}$, and hence 
		ergodic 
		\corOO{(with respect to temporal shifts)}
		by Kozlov's lemma (see \cite{Koz1985} for details). Therefore,
		\begin{align*}
		\Lambda_\beta(\theta) &= \lim_{n\to\infty}\frac1{n}\log E_0\left[e^{\beta\sum_{i=0}^{n-1}V(T_{X_i}\omega) + \theta X_n}\right] = \lim_{n\to\infty}\frac1{n}\log \hat E_0^\theta\left[e^{\beta\sum_{i=0}^{n-1}V(T_{X_i}\omega)}\right] + \log\cosh(\theta)\\
		&\ge \lim_{n\to\infty}\beta\hat E_0^\theta\left[\frac1{n}\sum_{i=0}^{n-1}V(T_{X_i}\omega)\right] + \log\cosh(\theta) = \beta\mathbb{E}[V(\cdot)] + \log\cosh(\theta)
		\end{align*}
		by Jensen's inequality, the Birkhoff ergodic theorem and the bounded convergence theorem. Recalling the first part of Proposition \ref{propimpfor}, we get the desired lower bound.
		\item [(c)] If $|\theta| \le \beta(1 - \mathbb{E}[V(\cdot)])$, 
		  then, 
		  \corOO{with the $o(\cdot)$ notation denoting 
		    error terms that may depend
		on $\omega$,}
		$$e^{n\Lambda_\beta(\theta) + o(n)} = E_0\left[e^{\beta\sum_{i=0}^{n-1} V(T_{X_i}\omega) + \theta X_n}\right] \le E_0\left[e^{n\beta + (|\theta| - \beta(1 - \mathbb{E}[V(\cdot)]))|X_n| + o(|X_n|)}\right] \le e^{n\beta + o(n)}$$
		for $\mathbb{P}$-a.e.\ $\omega$ by the Birkhoff ergodic theorem and the observation that the particle visits each $x$ between $0$ and $X_n$ at least once. Therefore, \corOO{$\Lambda_\beta(\theta)\le\beta$ and one concludes
		by appealing to} part (b).
		\item [(d)] Similar to part (c), if $|\theta| \ge \beta(1 - \mathbb{E}[V(\cdot)])$, then
		$$e^{n\Lambda_\beta(\theta) + o(n)} \le E_0\left[e^{n\beta + (|\theta| - \beta(1 - \mathbb{E}[V(\cdot)]))|X_n| + o(|X_n|)}\right] \le e^{n\beta + n\log\cosh(|\theta| - \beta(1 - \mathbb{E}[V(\cdot)])) + o(n)}$$
		for $\mathbb{P}$-a.e.\ $\omega$. Therefore, $$\beta\mathbb{E}[V(\cdot)] \le \Lambda_\beta(\theta) - \log\cosh(\theta) \le \beta + \log\cosh(|\theta| - \beta(1 - \mathbb{E}[V(\cdot)])) - \log\cosh(\theta)$$
		by part (b), and the desired result follows.
		\item [(e)] Recall from \eqref{pregaztap} that
		$$F_{\beta,\theta}^\lambda(\omega,1) = - \log E_0\left[e^{\beta\sum_{i=0}^{\tau_1-1}V(T_{X_i}\omega) - \lambda\tau_1}\one_{\{\tau_1<\infty\}}\right] - \theta$$
		for every $\omega\in\Omega$, $\theta>0$ and $\lambda>\beta$. Since $0\le V(\cdot)\le 1$, it follows from an application of the dominated convergence theorem (DCT) that the map $\lambda\mapsto  F_{\beta,\theta}^\lambda(\omega,1)$ is differentiable. By a second application of the DCT, we deduce that the map $\lambda\mapsto  \mathbb{E}[F_{\beta,\theta}^\lambda(\cdot,1)]$ is differentiable and
		\begin{equation}\label{ustdest}
		\frac{\partial}{\partial\lambda}\mathbb{E}[F_{\beta,\theta}^\lambda(\cdot,1)] = \mathbb{E}\left[\frac{E_0\left[\tau_1e^{\beta\sum_{i=0}^{\tau_1-1}V(T_{X_i}\omega) - \lambda\tau_1}\one_{\{\tau_1<\infty\}}\right]}{E_0\left[e^{\beta\sum_{i=0}^{\tau_1-1}V(T_{X_i}\omega) - \lambda\tau_1}\one_{\{\tau_1<\infty\}}\right]}\right]>0.
		\end{equation}
		Resorting to the DCT for a third time, we see that $\lambda\mapsto  \mathbb{E}[F_{\beta,\theta}^\lambda(\cdot,1)]$ is in fact continuously differentiable.
		The map $\theta\mapsto\mathbb{E}[F_{\beta,\theta}^\lambda(\cdot,1)]$ is linear, and hence continuously differentiable, too. 
		Recall from \eqref{melburn} that
		\begin{equation}
		  \label{eq-star14}
		  \mathbb{E}[F_{\beta,\theta}^{\Lambda_\beta(\theta)}(\cdot,1)] = \mathbb{E}[F_{\beta,\theta}(\cdot,1)] = 0
		\end{equation}
		whenever $\theta>0$ and $\Lambda_\beta(\theta) > \beta$.
		Thus, by the implicit function theorem, the map $\theta\mapsto\Lambda_\beta(\theta)$ is continuously differentiable on the set $\{\theta\in\mathbb{R}:\,\theta>0\ \text{and}\ \Lambda_\beta(\theta)>\beta\}$. Since $\Lambda_\beta(-\theta) = \Lambda_\beta(\theta)$ and $\Lambda_\beta(0) = \beta$ by parts (a) and (c), this concludes the proof.\qedhere
	\end{itemize}
\end{proof}

	Proposition \ref{temelsaf} does not answer the question of whether $\theta\mapsto\Lambda_\beta(\theta)$ is differentiable at the endpoints of the interval $\{\theta\in\mathbb{R}:\,\Lambda_\beta(\theta) = \beta\}$. We provide a negative answer to this question in Appendix \ref{app_isimyok} under a very mild additional assumption on the potential (see Theorem \ref{nondifgen} for the precise statement). This nondifferentiability is reflected in the sketches in Figure \ref{matrixfigure}, but it is not actually used anywhere in the paper.

%
%

\section{Full control}\label{ayirmayir}

For every $\delta\in(0,1]$, $\beta>0$, $\theta\in\mathbb{R}$, $t\ge0$ and $x\in\mathbb{R}$, let 
\begin{align}
\overline H_{\delta,\beta}^U(\theta,t,x) &= \limsup_{\ep\to0}\inf_{\pi\in\mathcal{P}_{[\epin t]}(\delta)}\ep\log E_{[\epin x]}^{\pi,\omega}\left[e^{\beta\sum_{i=0}^{[\epin t]-1}V(T_{X_i}\omega) + \theta X_{[\epin t]}}\right]\quad \text{and}\label{homlimsup}\\
\overline H_{\delta,\beta}^L(\theta,t,x) &= \liminf_{\ep\to0}\inf_{\pi\in\mathcal{P}_{[\epin t]}(\delta)}\ep\log E_{[\epin x]}^{\pi,\omega}\left[e^{\beta\sum_{i=0}^{[\epin t]-1}V(T_{X_i}\omega) + \theta X_{[\epin t]}}\right].\label{homliminf}
\end{align}

In this section, we assume that \eqref{ass_wlog} and \eqref{ass_pan} hold, take $\delta = 1$, provide matching upper and lower bounds for \eqref{homlimsup} and \eqref{homliminf}, respectively, and prove Theorem \ref{thmfull}. In fact, we go beyond Theorem \ref{thmfull} and obtain error bounds for the limit in \eqref{homlimit} 
which will be used in the proof of Theorem \ref{unifhom2}.

\subsection{Upper bounds}\label{apirband}

For every $h\in(0,1)$, $t\ge0$, $x\in\mathbb{R}$, $a>0$, $\mathbb{P}$-a.e.\ $\omega$ and sufficiently small $\ep>0$, Lemma \ref{gayettehos} implies the existence of an $h$-valley of the form $[x_* - 1,x_*]$ that is contained in the interval $[\epin(x-a),\epin(x+a)]$. Consider the policy $\pitwo$ (given in \eqref{pengol}) with this specific choice of $x_*$ (see Remark \ref{dundundur}). Under this policy, the particle marches deterministically to $x_*$ and is then confined to the $h$-valley $[x_*-1,x_*]$ for the rest of the $[\epin t]$ units of time (if it gets to $x_*$). This gives the following bound:
\begin{equation}
\begin{aligned}\label{mazharfu}
\ep\log E_{[\epin x]}^{\pitwo,\omega}\left[e^{\beta\sum_{i=0}^{[\epin t]-1}V(T_{X_i}\omega) + \theta X_{[\epin t]}}\right] &\le \ep[\epin t]\beta h + \ep[\epin a](\beta + |\theta|) + \ep\theta [\epin x]\\
&\le t\beta h + a(\beta + |\theta|) + \theta x + \ep|\theta|. 
\end{aligned}
\end{equation}
Sending $\ep\to0$, $h\to0$ and $a\to0$ \corOO{in this order}, we deduce that
\begin{equation}\label{UBfull1}
\overline H_{1,\beta}^U(\theta,t,x) \le \theta x.
\end{equation}

\begin{remark}\label{dundundur}
	In Section \ref{asop}, we introduced the RW policy $\pieL$ using an $h$-valley of the form $[x_*-\ell,x_*+\ell-1]$. 
	When the walk starts at the origin (e.g., in \eqref{homlimsup} with $x=0$), we can work with a fixed $x_* = x_*(\omega,h,\ell)$ for all sufficiently small $\ep>0$. However, when the walk starts at $[\epin x]$ with some $x\ne0$, we need to take $x_* = x_*(\omega,h,\ell,x,a,\ep)$ as in Lemma \ref{gayettehos}. In particular, the policy $\pieL$ depends on $\ep$ in the latter case.
\end{remark}

When $\theta\ge 0$, consider the policy $\pileft$ (given in \eqref{sagsol}) under which the particle marches deterministically to the left for $[\epin t]$ units of time. 
\begin{equation}
\begin{aligned}\label{fuatoz}
&\ep\log E_{[\epin x]}^{\pileft,\omega}\left[e^{\beta\sum_{i=0}^{[\epin t]-1}V(T_{X_i}\omega) + \theta X_{[\epin t]}}\right] = \ep\sum_{i=0}^{[\epin t] - 1}(\beta V(T_{[\epin x] - i}\omega) - \theta) +  \ep\theta[\epin x]\\
&\qquad = \ep [\epin t](\beta\mathbb{E}[V(\cdot)] - \theta) + \ep\theta[\epin x] + \ep\sum_{i=0}^{[\epin t] - 1}(\beta V(T_{[\epin x] - i}\omega) - \beta\mathbb{E}[V(\cdot)])\\
&\qquad \le t(\beta\mathbb{E}[V(\cdot)] - \theta) + \theta x + \ep\sum_{i=0}^{[\epin t] - 1}(\beta V(T_{[\epin x] - i}\omega) - \beta\mathbb{E}[V(\cdot)]) + \ep(\beta + 2|\theta|).
\end{aligned}
\end{equation}
By the Birkhoff ergodic theorem, we deduce the following bound: for $\mathbb{P}$-a.e.\ $\omega$,
\begin{equation}\label{UBfull2}
\overline H_{1,\beta}^U(\theta,t,x) \le \limsup_{\ep\to0}\ep\log E_{[\epin x]}^{\pileft,\omega}\left[e^{\beta\sum_{i=0}^{[\epin t]-1}V(T_{X_i}\omega) + \theta X_{[\epin t]}}\right] = t(\beta\mathbb{E}[V(\cdot)] - \theta) + \theta x.
\end{equation}

\subsection{Lower bounds when $\theta\ge 0$}\label{lovirband}

\subsubsection{Lower bound when $\theta\ge\beta\mathbb{E}[V(\cdot)]$}

Define $G_\beta:\Omega\times\{-1,1\}\to\mathbb{R}$ by
\begin{equation}\label{ptesi}
G_\beta(\omega,-1) = -\beta V(\omega) + \beta\mathbb{E}[V(\cdot)]\quad\text{and}\quad G_\beta(\omega,1) = - G_\beta(T\omega,-1) = \beta V(T\omega) - \beta\mathbb{E}[V(\cdot)].
\end{equation}
Then, $\mathbb{E}[G_\beta(\cdot,\pm 1)] = 0$, and $G_\beta$ is a bounded and centered cocycle (see Definition \ref{cencocdef} in Appendix \ref{app_cocycle}).
Analogous to $F_{\beta,\theta}$ (see Proposition \ref{cikmazde}) in the case of no control, $G_\beta$ will serve as the corrector in the case of full control.

For every $p\in[0,1]$, let
$$g_{\beta,\theta}(\omega,p) = pe^{\beta V(\omega) + \theta + G_\beta(\omega,1)} + (1-p)e^{\beta V(\omega) - \theta + G_\beta(\omega,-1)}.$$

\begin{lemma}\label{cokgenold}
If $\theta\ge\beta\mathbb{E}[V(\cdot)]$, then
\begin{equation}\label{fullineq}
g_{\beta,\theta}(\omega,p) \ge g_{\beta,\theta}(\omega,0) = e^{\beta\mathbb{E}[V(\cdot)] - \theta}
\end{equation}
for every $p\in[0,1]$ and $\omega\in\Omega$.
\end{lemma}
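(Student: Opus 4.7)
The plan is to evaluate $g_{\beta,\theta}(\omega,p)$ explicitly at the two endpoints $p=0$ and $p=1$ by substituting the definitions of $G_\beta(\omega,\pm1)$ given in \eqref{ptesi}. Once the two endpoint values are computed, the affine (hence monotone) dependence of $g_{\beta,\theta}(\omega,\cdot)$ in $p$ reduces the assertion to comparing just these two numbers.

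First I would substitute $G_\beta(\omega,-1) = -\beta V(\omega) + \beta\mathbb{E}[V(\cdot)]$ into the $(1-p)$-term to obtain
$$\beta V(\omega) - \theta + G_\beta(\omega,-1) = \beta\mathbb{E}[V(\cdot)] - \theta,$$
which is constant in $\omega$, yielding $g_{\beta,\theta}(\omega,0) = e^{\beta\mathbb{E}[V(\cdot)] - \theta}$ as claimed. Next, substituting $G_\beta(\omega,1) = \beta V(T\omega) - \beta\mathbb{E}[V(\cdot)]$ gives
$$g_{\beta,\theta}(\omega,1) = e^{\beta V(\omega) + \beta V(T\omega) - \beta\mathbb{E}[V(\cdot)] + \theta}.$$

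Since $g_{\beta,\theta}(\omega,p)$ is affine in $p$, the inequality \eqref{fullineq} holds for all $p\in[0,1]$ if and only if $g_{\beta,\theta}(\omega,1) \ge g_{\beta,\theta}(\omega,0)$. Taking logarithms, this reduces to
$$2\theta \ge 2\beta\mathbb{E}[V(\cdot)] - \beta V(\omega) - \beta V(T\omega).$$
The hypothesis $\theta \ge \beta\mathbb{E}[V(\cdot)]$ gives $2\theta \ge 2\beta\mathbb{E}[V(\cdot)]$, and since $V\ge 0$ by \eqref{ass_wlog}, the right-hand side is bounded above by $2\beta\mathbb{E}[V(\cdot)]$. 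Chaining these two bounds concludes the proof.

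There is essentially no obstacle here: the corrector $G_\beta$ was designed precisely so that the $(1-p)$-term telescopes into the constant $e^{\beta\mathbb{E}[V(\cdot)] - \theta}$, after which the remaining inequality is a one-line consequence of $V\ge 0$ together with the threshold assumption on $\theta$.
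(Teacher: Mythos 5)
Your proof is correct and is essentially the same as the paper's: both note that $g_{\beta,\theta}(\omega,\cdot)$ is affine in $p$, reduce the assertion to comparing the two exponents $\beta V(\omega)\pm\theta + G_\beta(\omega,\pm 1)$ (equivalently $2\theta \ge G_\beta(\omega,-1)-G_\beta(\omega,1)$), and close the gap using $V\ge0$ together with $\theta\ge\beta\mathbb{E}[V(\cdot)]$. You merely make the substitutions and the affinity argument a bit more explicit than the paper does.
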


\begin{proof}
Since $V(\cdot)\ge0$, we have
$$2\theta \ge 2\beta\mathbb{E}[V(\cdot)] \ge -\beta V(\omega) + \beta\mathbb{E}[V(\cdot)] -\beta V(T\omega) + \beta\mathbb{E}[V(\cdot)] = G_\beta(\omega,-1) - G_\beta(\omega,1).$$
Therefore, $\beta V(\omega) + \theta + G_\beta(\omega,1) \ge \beta V(\omega) - \theta + G_\beta(\omega,-1)$, and the inequality in \eqref{fullineq} follows.
The equality in \eqref{fullineq} follows from direct substitution.
\end{proof}

For every $t\ge0$, $x\in\mathbb{R}$ and $\pi\in\mathcal{P}_{[\epin t]}(1)$, 
use Lemma \ref{cencoclem} to 
give the following bound, 
\corOO{where the $o([\epsilon^{-1} t])$ error terms
depend on $(\omega,t,x,\beta)$ but not on $\pi$ :}
\begin{align*}
&E_{[\epin x]}^{\pi,\omega}\left[e^{\beta\sum_{i=0}^{[\epin t]-1} V(T_{X_i}\omega) + \theta X_{[\epin t]}}\right]\nonumber\\
&\quad = E_0^{\pi,\omega}\left[e^{\sum_{i=0}^{[\epin t]-1} [\beta V(T_{[\epin x] + X_i}\omega) + \theta Z_{i+1} + G_\beta(T_{[\epin x] + X_i}\omega,Z_{i+1})]}\right]e^{\theta[\epin x] + o([\epin t])}\nonumber\\
&\quad = E_0^{\pi,\omega}\left[e^{\sum_{i=0}^{[\epin t]-2} [\beta V(T_{[\epin x] + X_i}\omega) + \theta Z_{i+1} + G_\beta(T_{[\epin x] + X_i}\omega,Z_{i+1})]}\right.\nonumber\\
&\hspace{17mm}\left.\times g_{\beta,\theta}\left(T_{[\epin x] + X_{[\epin t]-1}}\omega,\pi_{[\epin t]-1}([\epin t],\omega,[\epin x] + X_{[\epin t]-1},1)\right)\right]e^{\theta[\epin x] + o([\epin t])}\nonumber\\
&\quad \ge E_0^{\pi,\omega}\left[e^{\sum_{i=0}^{[\epin t]-2} [\beta V(T_{[\epin x] + X_i}\omega) + \theta Z_{i+1} + G_\beta(T_{[\epin x] + X_i}\omega,Z_{i+1})]}\right]e^{(\beta\mathbb{E}[V(\cdot)] - \theta) + \theta[\epin x] + o([\epin t])},
\end{align*}
\corOO{and the
  last inequality used Lemma \ref{cokgenold}. Iterating, one obtains} 
\begin{align}
&E_{[\epin x]}^{\pi,\omega}\left[e^{\beta\sum_{i=0}^{[\epin t]-1} V(T_{X_i}\omega) + \theta X_{[\epin t]}}\right]
\ge\cdots\ge e^{[\epin t](\beta\mathbb{E}[V(\cdot)] - \theta) + \theta[\epin x] + o([\epin t])}.\label{takpat}
\end{align}
First taking $\ep\log$ of both sides, then taking infimum over $\pi\in\mathcal{P}_{[\epin t]}(1)$, and finally sending $\ep\to0$, we conclude that
\begin{equation}\label{LBfull1}
\overline H_{1,\beta}^L(\theta,t,x) \ge t(\beta\mathbb{E}[V(\cdot)] - \theta) + \theta x.
\end{equation}

\subsubsection{Lower bound when $0<\theta<\beta\mathbb{E}[V(\cdot)]$}
\corOO{We use scaling properties.}
Let $\bar\beta = \bar\beta(\theta) = \frac{\theta}{\mathbb{E}[V(\cdot)]} < \beta$. Then,
\begin{equation}\label{anlamyap}
\ep\log E_{[\epin x]}^{\pi,\omega}\left[e^{\beta\sum_{i=0}^{[\epin t]-1} V(T_{X_i}\omega) + \theta X_{[\epin t]}}\right] \ge \ep\log E_{[\epin x]}^{\pi,\omega}\left[e^{\bar\beta\sum_{i=0}^{[\epin t]-1} V(T_{X_i}\omega) + \theta X_{[\epin t]}}\right]
\end{equation}
and
\begin{equation}\label{LBfull2}
\overline H_{1,\beta}^L(\theta,t,x) \ge \overline H_{1,\bar\beta}^L(\theta,t,x) \ge t(\bar\beta\mathbb{E}[V(\cdot)] - \theta) + \theta x = \theta x
\end{equation}
for every $t\ge0$ and $x\in\mathbb{R}$. Here, the first inequality uses the fact that $V(\cdot)\ge0$, and the second inequality follows from \eqref{LBfull1} which is applicable since $\theta = \bar\beta\mathbb{E}[V(\cdot)]$.

\subsubsection{Lower bound when $\theta = 0$}

Since $V(\cdot)\ge0$, we have
\begin{equation}\label{anlamboz}
\ep\log E_{[\epin x]}^{\pi,\omega}\left[e^{\beta\sum_{i=0}^{[\epin t]-1} V(T_{X_i}\omega) + \theta X_{[\epin t]}}\right] = \ep\log E_{[\epin x]}^{\pi,\omega}\left[e^{\beta\sum_{i=0}^{[\epin t]-1} V(T_{X_i}\omega)}\right] \ge 0
\end{equation}
for every $\ep>0$, $t\ge0$ and $x\in\mathbb{R}$. Taking $\ep\to0$, we conclude that
\begin{equation}\label{LBfull3}
\overline H_{1,\beta}^L(0,t,x)\ge 0.
\end{equation}

\subsection{The effective Hamiltonian}

\begin{proof}[Proof of Theorem \ref{thmfull}]
If $0\le\theta<\beta\mathbb{E}[V(\cdot)]$, then the bounds \eqref{UBfull1}, \eqref{LBfull2} and \eqref{LBfull3} match for every $t>0$ and $x\in\mathbb{R}$,
\begin{equation}\label{omkoc1}
\overline H_{1,\beta}^L(\theta,t,x) = \overline H_{1,\beta}^U(\theta,t,x) = \theta x,
\end{equation}
and taking the infimum in \eqref{beforelimit} over the set $\{\pitwo:\,0<h<h_0\}$ for any $h_0>0$ does not change the limit in \eqref{homlimit}. (Regarding the choice of $x_*$, see Remark \ref{dundundur}.)

If $\theta\ge\beta\mathbb{E}[V(\cdot)]$, then the bounds \eqref{UBfull2} and \eqref{LBfull1} match for every $t>0$ and $x\in\mathbb{R}$,
\begin{equation}\label{omkoc2}
\overline H_{1,\beta}^L(\theta,t,x) = \overline H_{1,\beta}^U(\theta,t,x) = t(\beta\mathbb{E}[V(\cdot)] - \theta) + \theta x,
\end{equation}
and $\pileft$ is asymptotically optimal as $\ep\to0$.

If $\theta<0$, the analogous results follow from symmetry. The existence of the limit in \eqref{homlimit} and the validity of the identity in \eqref{simdigel} follow immediately from \eqref{omkoc1} and \eqref{omkoc2}. Finally, setting $t=1$ and $x=0$, we deduce \eqref{fulllimit}.
\end{proof}

\section{Partial control: Alternative formulation and upper bounds}

In this section, we consider the case $\delta\in(0,1)$ under the assumptions \eqref{ass_wlog} and \eqref{ass_pan}.

\subsection{Alternative formulation}\label{hssira}

Recall from our discussion in Section \ref{homressec},
\corOO{cf.\ \eqref{kisamioldu}},
that the infimum in \eqref{beforelimit} can be taken over
$$\mathcal{P}_n^{BB}(\delta) = \{\pi\in\mathcal{P}_n(\delta):\,{\textstyle \pi_i(n,\omega,y,1) = \frac{1 \pm \delta}{2}}\ \text{for every $i\in[0,n-1]$, $\omega\in\Omega$ and $y\in\mathbb{Z}$}\},$$
i.e., the set of bang-bang policies. 
For every $\pi\in\mathcal{P}_n^{BB}(\delta)$, define $\alpha = (\alpha_0,\ldots,\alpha_{n-1})$ by setting
$$\alpha_i = \alpha_i(n,\omega,y) = \frac1{2}\log\left(\frac{\pi_i(n,\omega,y,1)}{\pi_i(n,\omega,y,-1)}\right) = \begin{cases}
\ \; \, c&\ \text{if}\ \pi_i(n,\omega,y,1) = \frac{1 + \delta}{2},\\
-c&\ \text{if}\ \pi_i(n,\omega,y,1) = \frac{1 - \delta}{2}.
\end{cases}$$
The parameter $c$ was introduced in \eqref{parasi}. Note that 
$$\pi_i(n,\omega,y,\pm1) = \frac{e^{\pm\alpha_i(n,\omega,y)}}{e^{c} + e^{-c}}.$$
\corOO{We}
perform a change of measure: for every $x\in\mathbb{Z}$,
$$E_x^{\pi,\omega}\left[e^{\beta\sum_{i=0}^{n-1} V(T_{X_i}\omega) + \theta X_n}\right] = E_x\left[e^{\sum_{i=0}^{n-1} [\beta V(T_{X_i}\omega) + (\theta + \alpha_i(n,\omega,X_i))Z_{i+1}]}\right]e^{\theta x - n\log\cosh(c)}.$$
Then, \eqref{beforelimit} becomes
\begin{equation}\label{dualform}
u(n,x,\omega\,|\,\delta,\beta,\theta) = \inf_{\alpha\in\mathcal{A}_n(c)}\log E_x\left[e^{\sum_{i=0}^{n-1} [\beta V(T_{X_i}\omega) + (\theta + \alpha_i(n,\omega,X_i))Z_{i+1}]}\right] + \theta x - n\log\cosh(c),
\end{equation}
where the infimum is taken over
\begin{equation}\label{eyensi}
\mathcal{A}_n(c) = \left\{\alpha = (\alpha_0,\ldots,\alpha_{n-1}):\alpha_i = \alpha_i(n,\omega,y) = \pm c\ \text{for every $i\in[0,n-1]$, $\omega\in\Omega$ and $y\in\mathbb{Z}$}\right\}.
\end{equation}
Similarly, \eqref{homlimsup} and \eqref{homliminf} become
\begin{align}
&\begin{aligned}\label{dualformsup}
\overline H_{\delta,\beta}^U(\theta,t,x) &= \limsup_{\ep\to0}\inf_{\alpha\in\mathcal{A}_{[\epin t]}(c)}\ep\log E_{[\epin x]}\left[e^{\sum_{i=0}^{[\epin t]-1} [\beta V(T_{X_i}\omega) + (\theta + \alpha_i([\epin t],\omega,X_i))Z_{i+1}]}\right]\\
&\quad + \theta x - t\log\cosh(c)\quad\text{and}
\end{aligned}\\
&\begin{aligned}\label{dualforminf}
\overline H_{\delta,\beta}^L(\theta,t,x) &= \liminf_{\ep\to0}\inf_{\alpha\in\mathcal{A}_{[\epin t]}(c)}\ep\log E_{[\epin x]}\left[e^{\sum_{i=0}^{[\epin t]-1} [\beta V(T_{X_i}\omega) + (\theta + \alpha_i([\epin t],\omega,X_i))Z_{i+1}]}\right]\\
&\quad + \theta x - t\log\cosh(c).
\end{aligned}
\end{align}

\subsection{General upper bound}

The policies $\pileft,\piright\in\mathcal{P}_n^{BB}(\delta)$ (given in \eqref{sagsol}) correspond to $\aleft,\aright\in\mathcal{A}_n(c)$ with
$$\aleft_i(n,\omega,y) \equiv -c\quad\text{and}\quad\aright_i(n,\omega,y) \equiv c,$$
respectively. For every $\theta\in\mathbb{R}$, $t>0$ and $x\in\mathbb{R}$, substituting each of these policies (with $n=[\epin t]$) in the expectation on the right-hand side of \eqref{dualformsup} and using Theorem \ref{thmno}, we deduce the following bound:
\begin{equation}\label{UBpart1}
\overline H_{\delta,\beta}^U(\theta,t,x) \le t\left(\min\{\Lambda_\beta(\theta - c), \Lambda_\beta(\theta + c)\} - \log\cosh(c)\right) + \theta x.
\end{equation}

\subsection{Upper bound when $|\theta| \le c$}\label{cinsisimhay}

For every $h\in(0,1)$, $\ell\in\mathbb{N}$, $x\in\mathbb{R}$ and $a>0$, the RW policy $\pieL\in\mathcal{P}_n^{BB}(\delta)$ (given in \eqref{pengol}) 
corresponds to $\aeL\in\mathcal{A}_n(c)$ with
$$\aeL_i(n,\omega,y) = \begin{cases}\ \; \,c&\ \text{if $y < x_*$},\\
-c&\ \text{if $y \ge x_*$}.\end{cases}$$
When the walk starts at $[\epin x]$ with a sufficiently small $\ep>0$, recall from Lemma \ref{gayettehos} and Remark \ref{dundundur} that $[x_*-\ell,x_*+\ell-1]\subset[\epin(x-a),\epin(x+a)]$. 
Assume without loss of generality that $x_* = [\epin x]$, i.e., $[[\epin x]-\ell,[\epin x] + \ell-1]$ is an $h$-valley. (Starting the walk from $x_*$ instead of $[\epin x]$ changes the right-hand side of \eqref{homlimsup} by at most $a(\log 2 + \beta + 2|\theta| + 2c)$, which goes to $0$ as $a\to0$.) When $\theta = c$, substituting $\aeL$ in the expectation on the right-hand side of \eqref{dualformsup}, we get
\begin{equation}\label{gazagel}
	\overline H_{\delta,\beta}^U(c,t,x) \le \limsup_{\ep\to0}\ep\log E_0\left[e^{\sum_{i=0}^{[\epin t]-1} [\beta V(T_{[\epin x] + X_i}\omega) + 2cZ_{i+1}\one_{\{X_i < 0\}}]}\right] + cx - t\log\cosh(c),
\end{equation}
where we shifted the starting point of the RW $(X_i)_{i\ge0}$ to the origin.

Due to each complete left excursion starting from the origin, the $\sum 2cZ_{i+1}\one_{\{X_i < 0\}}$ term in the exponent inside the expectation on the right-hand side of \eqref{gazagel} increases precisely by $2c$, and this sum does not increase (but it can decrease) due to an incomplete left excursion. On the other hand, complete and incomplete right excursions starting from the origin have no effect on this sum. We deduce that
\begin{equation}\label{protoUB}
E_0\left[e^{\sum_{i=0}^{[\epin t]-1}[\beta V(T_{[\epin x] + X_i}\omega) + 2cZ_{i+1}\one_{\{X_i < 0\}}]}\right] \le E_0\left[e^{\beta\sum_{i=0}^{[\epin t]-1}V(T_{[\epin x] + X_i}\omega) + 2c\mathcal{L}_0(X_{0,[\epin t]})}\right],
\end{equation}
where 
\begin{equation}\label{elemtere}
\mathcal{L}_0(x_{0,n}) = \sum_{i=1}^n\one_{\{x_{i-1} = -1, x_i = 0\}}
\end{equation}
counts the number of complete left excursions of a nearest-neighbor path $x_{0,n}$ with $x_0 = 0$ and $n\in\mathbb{N}$.

For every $j,k\in\mathbb{N}\cup\{0\}$, let $S_j = \sum_{i=1}^j\one_{\{-\ell \le X_i \le\ell-1\}}$, $\sigma_k = \inf\{i\ge0: S_i = k\}$ and $Y_k^\ell = X_{\sigma_k}$. It is easy to see that $(Y_k^\ell)_{k\ge0}$ is a Markov process on $[-\ell,\ell-1]$ starting from the origin, and it has the following transition probabilities:
\begin{equation}
\begin{aligned}\label{tantana}
P_0(Y_k^\ell = y - 1\,|\,Y_{k-1}^\ell = y) &= P_0(Y_k^\ell = y + 1\,|\,Y_{k-1}^\ell = y) = 1/2\quad\text{if $y\in[-\ell+1,\ell-2]$,}\\
P_0(Y_k^\ell = -\ell\,|\,Y_{k-1}^\ell = -\ell) &= P_0(Y_k^\ell = -\ell + 1\,|\,Y_{k-1}^\ell = -\ell) = 1/2,\quad\text{and}\\
P_0(Y_k^\ell = \ell-1\,|\,Y_{k-1}^\ell = \ell-1) &= P_0(Y_k^\ell = \ell-2\,|\,Y_{k-1}^\ell = \ell-1) = 1/2.
\end{aligned}
\end{equation}
In words, $(Y_k^\ell)_{k\ge0}$ is a reflected RW on $[-\ell,\ell-1]$ and subject to geometric holding times (with rate $1/2$) at $-\ell$ and $\ell-1$. 
With this notation and observations, we control the right-hand side of \eqref{protoUB} as follows:
\begin{align}
&E_0\left[e^{\beta\sum_{i=0}^{[\epin t]-1}V(T_{[\epin x] + X_i}\omega) + 2c\mathcal{L}_0(X_{0,[\epin t]})}\right]\nonumber\\
&\quad = \sum_{m=1}^{[\epin t]}E_0\left[e^{\beta\sum_{i=0}^{[\epin t]-1}V(T_{[\epin x] + X_i}\omega) + 2c\mathcal{L}_0(X_{0,[\epin t]})}\one_{\{S_{[\epin t]} = m\}}\right]\nonumber\\
&\quad\le \sum_{m=1}^{[\epin t]}E_0\left[e^{2c\mathcal{L}_0(X_{0,[\epin t]})}\one_{\{S_{[\epin t]} = m\}}\right]e^{mh\beta + ([\epin t]-m)\beta}\nonumber\\
&\quad = \sum_{m=1}^{[\epin t]}E_0\left[e^{2c\mathcal{L}_0(Y_{0,m}^\ell)}\one_{\{S_{[\epin t]} = m\}}\right]e^{mh\beta + ([\epin t]-m)\beta}\nonumber\\
&\quad\le \sum_{m=1}^{[\epin t]} E_0\left[e^{2c\mathcal{L}_0(Y_{0,m}^\ell)}\right]e^{mh\beta + ([\epin t]-m)\beta} \le \sum_{m=1}^{[\epin t]} e^{mJ_\ell(2c) + mh\beta + ([\epin t]-m)\beta + o(m)}\nonumber\\
&\quad = e^{[\epin t]\max\left\{\beta,J_\ell(2c) + h\beta\right\} + o([\epin t])}.\label{konsek}
\end{align}
Here,
$$J_\ell(2c) := \limsup_{m\to\infty}\frac1{m}\log E_0\left[e^{2c\mathcal{L}_0(Y_{0,m}^\ell)}\right].$$
The proof of the following proposition is deferred to Appendix \ref{app_simple}.

\begin{proposition}\label{countexcursion}
For every $c\in(0,\infty)$, the limit
$$J(2c) := \lim_{n\to\infty}\frac1{n}\log E_0\left[e^{2c\mathcal{L}_0(X_{0,n})}\right]$$
exists. Moreover,
$$\lim_{\ell\to\infty}J_\ell(2c) = J(2c) = \log\cosh(c).$$
\end{proposition}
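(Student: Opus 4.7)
My approach uses a renewal decomposition at visits to $0$, combined with a coupling of $Y^\ell$ to SSRW.

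For SSRW $X$, successive returns to $0$ produce i.i.d.\ excursions with length $\tau_k$ and left-indicator $\xi_k\in\{0,1\}$; by the sign-symmetry of SSRW, $\xi_k$ is Bernoulli$(1/2)$ and independent of $\tau_k$, and a standard first-step analysis gives $\phi(s) := E_0[s^{\tau_1}] = 1-\sqrt{1-s^2}$. Decomposing $\sum_n s^n E_0[e^{2c\mathcal{L}_0(X_{0,n})}]$ according to the number $N_0(n)$ of completed excursions, one obtains
\[
G(s) := \sum_{n\ge 0} s^n E_0[e^{2c\mathcal{L}_0(X_{0,n})}] = \frac{1-\phi(s)}{(1-s)\bigl(1-\Phi(s,c)\bigr)}, \qquad \Phi(s,c) := \tfrac{1+e^{2c}}{2}\phi(s).
\]
The function $\Phi(\cdot,c)$ is strictly increasing on $[0,1]$ from $0$ to $(1+e^{2c})/2 > 1$, so the denominator of $G$ has a unique root $s^* \in (0,1)$; solving $\Phi(s^*,c) = 1$ yields $\sqrt{1-(s^*)^2} = \tanh c$, i.e., $s^* = 1/\cosh(c)$. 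Since $G$ has a simple pole at $s^*$ and no singularity in $|s|<s^*$, standard singularity analysis (or partial fractions / Hardy--Littlewood Tauberian) yields $\lim_n \tfrac{1}{n}\log E_0[e^{2c\mathcal{L}_0(X_{0,n})}] = -\log s^* = \log\cosh(c)$, so $J(2c)$ exists and equals $\log\cosh(c)$.

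For the reflected walk $Y^\ell$, which is a finite, irreducible, aperiodic Markov chain (aperiodicity from the $1/2$-probability self-loops at the reflecting boundaries), Perron--Frobenius applied to the $e^{2c}$-tilted transition matrix $W^\ell$ shows that the $\limsup$ defining $J_\ell(2c)$ is in fact a limit and equals $\log\rho(W^\ell)$. Equivalently, the same excursion argument gives $J_\ell(2c) = -\log s^*_\ell$, where $s^*_\ell \in (0,1)$ is the unique root of
\[
\Phi_\ell(s,c) = \tfrac{s}{2}\bigl(e^{2c}\psi^\ell_-(s) + \psi^\ell_+(s)\bigr) = 1, \qquad \psi^\ell_\pm(s) := E^\ell_{\pm 1}[s^{\sigma^\ell_0}],
\]
even though $\xi^\ell_k$ and $\tau^\ell_k$ are no longer independent (the interval $[-\ell,\ell-1]$ is not symmetric about $0$).

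To conclude I would show $s^*_\ell \to s^*$, and hence $J_\ell(2c) \to J(2c) = \log\cosh(c)$, by coupling $Y^\ell$ started from $\pm 1$ with SSRW until the SSRW exit time from $(-\ell,\ell-1)$. Gambler's ruin gives $P_{\pm 1}(\tau_{\mp\ell} < \tau_0) = O(1/\ell)$, and on the complementary event the coupled hitting times of $0$ agree, so $|\psi^\ell_\pm(s) - \psi(s)| = O(1/\ell)$ uniformly in $s \in [0,1]$, where $\psi(s) := E_{\pm 1}[s^{\tau_0}] = \phi(s)/s$. Hence $\Phi_\ell(\cdot,c) \to \Phi(\cdot,c)$ uniformly on $[0,1]$, and since $\Phi(\cdot,c)$ crosses $1$ transversally at $s^*$ (its derivative there is $e^c/\tanh(c) > 0$), the implicit function theorem gives $s^*_\ell \to s^*$. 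The main obstacle is ensuring that the coupling bound is uniform in $s$ up to and including a neighborhood of the critical value $s^* < 1$; this falls out cleanly because the coupling error is bounded by $2P(\text{failure}) \le 2/\ell$ uniformly in $s \in [0,1]$.
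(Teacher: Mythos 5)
Your proof is correct and takes a genuinely different route from the paper's. The paper derives a large deviation principle for $\mathcal{L}_0(X_{0,n})$ from Cram\'er's theorem applied to the i.i.d.\ inter-arrival times $\tau_{-1,0}^i$, computes the rate function explicitly, and gets $J(2c)=\log\cosh(c)$ from Varadhan's integral lemma; for the reflected walk it sandwiches $J_\ell(2c)$ from below by the pathwise monotonicity $\mathcal{L}_0(Y^\ell_{0,m})\ge\mathcal{L}_0(X_{0,m})$ and from above via the Lipschitz continuity of $J_\ell$ together with the maximum-principle convergence $\varphi_\ell(\lambda)\to\varphi(\lambda)$. You instead work directly with the generating function $G(s)=\sum_n s^n a_n$, use the renewal decomposition at visits to $0$ to write $G$ as a geometric series in $\Phi(s,c)=\tfrac{1+e^{2c}}{2}\phi(s)$, and read off $J(2c)=-\log s^*$ from the radius of convergence; for $J_\ell$ you invoke Perron--Frobenius (which gives the bonus that the $\limsup$ defining $J_\ell$ is actually a limit, something the paper does not need or claim) and a coupling estimate $|\psi^\ell_\pm(s)-\psi(s)|=O(1/\ell)$ uniformly in $s$. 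Your approach is more elementary in the sense of avoiding LDP machinery, at the cost of having to handle the period-$2$ structure of SSRW: $G$ has simple poles at both $\pm s^*$, not only $s^*$. This is not a real gap, since $a_{2k}=a_{2k+1}$ (the count $\mathcal{L}_0$ can only increment at even times), so one can write $G(s)=(1+s)\tilde G(s^2)$ and apply singularity analysis to $\tilde G$; still, the claim that ``$G$ has no singularity in $|s|<s^*$ and a simple pole at $s^*$'' as stated is slightly incomplete and should acknowledge the pole at $-s^*$. A minor notational slip: $P_{\pm1}(\tau_{\mp\ell}<\tau_0)$ should be the hitting probability of the nearer reflecting boundary, i.e.\ $P_{+1}(\tau_{\ell-1}<\tau_0)=1/(\ell-1)$ and $P_{-1}(\tau_{-\ell}<\tau_0)=1/\ell$; the $O(1/\ell)$ conclusion is unaffected. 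Both approaches are of comparable length; the paper's pathwise lower bound $\mathcal{L}_0(Y^\ell)\ge\mathcal{L}_0(X)$ is a nice observation that makes half of the $J_\ell\to J$ argument free, whereas your coupling argument handles both directions symmetrically and quantifies the rate of convergence of $s^*_\ell$.
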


Putting together \eqref{gazagel}, \eqref{protoUB} and \eqref{konsek} (and adapting the same argument to the $\theta = -c$ case), we get
\begin{equation}
\begin{aligned}\label{UBcece}
\overline H_{\delta,\beta}^U(\pm c,t,x) &\le \liminf_{h\to 0}\liminf_{\ell\to\infty}\limsup_{\ep\to0}\ep\log E_{[\epin x]}^{\pieL,\omega}\left[e^{\beta\sum_{i=0}^{[\epin t]-1}V(T_{X_i}\omega) \pm c X_{[\epin t]}}\right]\\ &\le \lim_{h\to 0}\lim_{\ell\to\infty}t\left(\max\left\{\beta,J_\ell(2c) + h\beta\right\} - \log\cosh(c)\right) \pm cx\\
&= t[\beta - \log\cosh(c)]^+ \pm cx.
\end{aligned}
\end{equation}

Finally, for any $\theta\in [-c,c]$, let $r = \frac{\theta +c}{2c}\in [0,1]$. Then, $\theta = (1-r)(-c) + rc$ is a convex combination. For every $h\in(0,1)$ and $\ell\in\mathbb{N}$, H\"older's inequality gives
\begin{align*}
&E_{[\epin x]}^{\pieL,\omega}\left[e^{\beta\sum_{i=0}^{[\epin t]-1} V(T_{X_i}\omega) + \theta X_{[\epin t]}}\right]\\ &\ \le \left(E_{[\epin x]}^{\pieL,\omega}\left[e^{\beta\sum_{i=0}^{[\epin t]-1} V(T_{X_i}\omega) - c X_{[\epin t]}}\right]\right)^{(1-r)} \left(E_{[\epin x]}^{\pieL,\omega}\left[e^{\beta\sum_{i=0}^{[\epin t]-1} V(T_{X_i}\omega) + c X_{[\epin t]}}\right]\right)^r.
\end{align*}
Therefore, by \eqref{UBcece},
\begin{equation}
\begin{aligned}\label{UBpart2}
\overline H_{\delta,\beta}^U(\theta,t,x) &\le \liminf_{h\to 0}\liminf_{\ell\to\infty}\limsup_{\ep\to0}\ep\log E_{[\epin x]}^{\pieL,\omega}\left[e^{\beta\sum_{i=0}^{[\epin t]-1}V(T_{X_i}\omega) + \theta X_{[\epin t]}}\right]\\
&\le t[\beta - \log\cosh(c)]^+ + \theta x.
\end{aligned}
\end{equation}

\section{Partial control: Lower bounds and the effective Hamiltonian}

As in the previous section, we consider the case $\delta\in(0,1)$ under the assumptions \eqref{ass_wlog} and \eqref{ass_pan}.

\subsection{Uniform lower bound}\label{martingale}

For every $\ep>0$, $t>0$, $x\in\mathbb{R}$, $\omega\in\Omega$, $\theta\in\mathbb{R}$ and $\alpha\in\mathcal{A}_{[\epin t]}(c)$ (see \eqref{eyensi}),
$$M_j = M_j(\omega,\theta,\alpha) := \sum_{i=0}^{j-1}(\theta + \alpha_i([\epin t],\omega,X_i))Z_{i+1}$$ defines a martingale $(M_j)_{0\le j\le[\epin t]}$ under $P_{[\epin x]}$, with $|M_j - M_{j-1}| \le |\theta| + c$. Therefore, for every $b>0$,
$$P_{[\epin x]}(M_{[\epin t]} \le - [\epin t]b) \le e^{-\frac{[\epin t]b^2}{2(|\theta| + c)^2}}$$
by the Azuma-Hoeffding inequality (see \cite[Section 12.2]{GriSti2001}).

For every $h\in(0,1)$, $\ell\in\mathbb{N}$, $a\in(0,t)$, $\mathbb{P}$-a.e.\ $\omega$ and sufficiently small $\ep>0$, we know by Lemma \ref{gayettehos} that there exists an $h$-hill of the form $[x^* - \ell,x^* + \ell - 1]$ contained in $[\epin(x-a),\epin(x+a)]$. Recall from the proof of Lemma \ref{kendiyag} that $A_{[\epin t]}(x^*,h,\ell)$
is the event that the particle marches deterministically from 
\corOO{$[\epin x]$}
to $x^*$ and then spends the rest of the $[\epin t]$ units of time in this $h$-hill. It follows from Lemma \ref{appasakla} that
$$P_{[\epin x]}(A_{[\epin t]}(x^*,h,\ell)) \ge e^{[\epin t]\log\cos(\pi/(2\ell + 1)) - \epin a\log2 + o(\epin t)}.$$
Take $a\in(0,t)$ sufficiently small and $\ell\in\mathbb{N}$ sufficiently large (both depending on $b$) so that $$t|\log\cos(\pi/(2\ell + 1))| + a\log2 < \frac{tb^2}{2(|\theta| + c)^2}.$$
Then, $$P_{[\epin x]}(A_{[\epin t]}(x^*,h,\ell)\setminus\{M_{[\epin t]}\le - [\epin t]b\}) \ge e^{[\epin t]\log\cos(\pi/(2\ell + 1)) - \epin a\log2 + o(\epin t)}.$$
Restricting the expectation on the right-hand side of \eqref{dualforminf} on this set difference gives
\begin{align*}
&E_{[\epin x]}\left[e^{\sum_{i=0}^{[\epin t]-1} [\beta V(T_{X_i}\omega) + (\theta + \alpha_i([\epin t],\omega,X_i))Z_{i+1}]}\right]\\
&\quad\ge E_{[\epin x]}\left[e^{\beta\sum_{i=0}^{[\epin t]-1}V(T_{X_i}\omega) + M_{[\epin t]}}\one_{A_{[\epin t]}(x^*,h,\ell)\setminus\{M_{[\epin t]}\le - [\epin t]b\}}\right]\\
&\quad\ge e^{[\epin t]\left(\beta h + \log\cos(\pi/(2\ell + 1)) - b\right) - \epin a(\beta h + \log2) + o(\epin t)}.
\end{align*}
Taking $\ep\log$ of both sides, then sending $\ep\to0$, $a\to0$, $h\to 1$, $\ell\to\infty$, and finally taking $b\to0$, we get the following uniform lower bound:
\begin{equation}\label{LBpart1}
\overline H_{\delta,\beta}^L(\theta,t,x) \ge t(\beta - \log\cosh(c)) + \theta x.
\end{equation}


%
%

\subsection{Lower bounds when $\theta\ge0$ and $\Lambda_\beta(\theta - c)>\beta$}\label{muspet}

\corOO{We begin with a preliminary computation.}
For every $\xi\in\mathbb{R}$, let
$$g_{\beta,\theta-c}(\omega,\xi) = \frac1{2}e^{\beta V(\omega) + \xi + F_{\beta,\theta-c}(\omega,1)} + \frac1{2}e^{\beta V(\omega) - \xi + F_{\beta,\theta-c}(\omega,-1)},$$
where $F_{\beta,\theta-c}$ is the corrector defined in \eqref{gaztap}.

\begin{lemma}\label{derin}
If $\Lambda_\beta(\theta - c) > \beta$, then
\begin{equation}\label{yemekhane}
g_{\beta,\theta-c}(\omega,\theta-c) = e^{\Lambda_\beta(\theta-c)}
\end{equation}
for every $\omega\in\Omega$. Moreover, the following equivalence holds:
\begin{equation}\label{eskinut}
g_{\beta,\theta-c}(\omega,\theta+c) \ge g_{\beta,\theta-c}(\omega,\theta-c)\quad\iff\quad\theta + F_{\beta,\theta-c}(\omega,1) \ge -\theta + F_{\beta,\theta-c}(\omega,-1).
\end{equation}
\end{lemma}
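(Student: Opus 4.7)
The first claim is essentially a direct substitution. With $\xi=\theta-c$, the definition
\[
g_{\beta,\theta-c}(\omega,\xi)=\tfrac{1}{2}e^{\beta V(\omega)+\xi+F_{\beta,\theta-c}(\omega,1)}+\tfrac{1}{2}e^{\beta V(\omega)-\xi+F_{\beta,\theta-c}(\omega,-1)}
\]
reduces to the right-hand side of the corrector identity \eqref{menzilde} of Proposition \ref{cikmazde} with the parameter there taken to be $\theta'=\theta-c$. Since $\Lambda_\beta(\theta-c)>\beta$, that identity is applicable and yields $g_{\beta,\theta-c}(\omega,\theta-c)=e^{\Lambda_\beta(\theta-c)}$ for every $\omega$.

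For the equivalence, the plan is to note that $g_{\beta,\theta-c}(\omega,\cdot)$ has the form $\xi\mapsto \tfrac12 e^{a}e^{\xi}+\tfrac12 e^{b}e^{-\xi}$ with $a=\beta V(\omega)+F_{\beta,\theta-c}(\omega,1)$ and $b=\beta V(\omega)+F_{\beta,\theta-c}(\omega,-1)$, and that the two values $\xi=\theta+c$ and $\xi=\theta-c$ are symmetric about $\theta$. I would simply subtract, factor, and get
\[
g_{\beta,\theta-c}(\omega,\theta+c)-g_{\beta,\theta-c}(\omega,\theta-c)=\sinh(c)\bigl(e^{a+\theta}-e^{b-\theta}\bigr).
\]
Since $\delta\in(0,1)$ forces $c>0$ by \eqref{parasi}, $\sinh(c)>0$, so the sign of the left-hand side equals the sign of $(a+\theta)-(b-\theta)$, which is exactly $(\theta+F_{\beta,\theta-c}(\omega,1))-(-\theta+F_{\beta,\theta-c}(\omega,-1))$. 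This gives the stated equivalence.

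There is no real obstacle here: the lemma is essentially a bookkeeping statement that repackages the corrector identity \eqref{menzilde} (for the first part) and exploits the symmetry of $\theta\pm c$ around $\theta$ together with the positivity of $\sinh(c)$ (for the second part). The only minor point worth flagging is to record explicitly that $c>0$ in the weak/strong control setting, so that the factor $\sinh(c)$ can be cancelled without altering the direction of the inequality.
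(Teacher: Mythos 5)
Your proof is correct and follows essentially the same route as the paper: the first identity is the corrector equation \eqref{menzilde} applied at $\theta'=\theta-c$ (valid because $\Lambda_\beta(\theta-c)>\beta$), and the second part is the same factorization — the paper writes the chain of equivalences keeping the factor $\frac{e^c-e^{-c}}{2}$ explicit and cancelling it, while you package it as $\sinh(c)$ and argue by the sign of the difference; both cancellations rest on $c>0$, which holds since $\delta\in(0,1)$ forces $c=\tfrac12\log\tfrac{1+\delta}{1-\delta}>0$, exactly as you note.
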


\begin{proof}
The equality in \eqref{yemekhane} is immediate from \eqref{menzilde}. The equivalence in \eqref{eskinut} is shown as follows:
\begin{align}
&g_{\beta,\theta-c}(\omega,\theta+c) \ge g_{\beta,\theta-c}(\omega,\theta-c)\nonumber\\
\iff& \frac1{2}e^{(\theta +c) + F_{\beta,\theta-c}(\omega,1)} + \frac1{2}e^{- (\theta +c) + F_{\beta,\theta-c}(\omega,-1)} \ge \frac1{2}e^{(\theta -c) + F_{\beta,\theta-c}(\omega,1)} + \frac1{2}e^{- (\theta -c) + F_{\beta,\theta-c}(\omega,-1)}\nonumber\\
\iff& e^{\theta + F_{\beta,\theta-c}(\omega,1)}\left(\frac{e^c - e^{-c}}{2}\right) \ge e^{-\theta + F_{\beta,\theta-c}(\omega,-1)}\left(\frac{e^c - e^{-c}}{2}\right)\nonumber\\
\iff& \theta + F_{\beta,\theta-c}(\omega,1) \ge -\theta + F_{\beta,\theta-c}(\omega,-1)\nonumber.\qedhere
\end{align}
\end{proof}

\subsubsection{Lower bound when $\theta > c$ and $\Lambda_\beta(\theta - c) > \beta$}

For every $\omega\in\Omega$,
$$\theta + F_{\beta,\theta-c}(\omega,1) > c > 0 > -c > - (\theta + F_{\beta,\theta-c}(T_{-1}\omega,1)) = -\theta + F_{\beta,\theta-c}(\omega,-1)$$
holds by \eqref{fboundsrec}. Hence,
\begin{equation}\label{budaynim}
g_{\beta,\theta-c}(\omega,\theta+c) \ge g_{\beta,\theta-c}(\omega,\theta-c) = e^{\Lambda_\beta(\theta-c)}
\end{equation}
by Lemma \ref{derin}. Therefore, for every $\ep>0$, $t>0$, $x\in\mathbb{R}$, $\alpha\in\mathcal{A}_{[\epin t]}(c)$ and $\mathbb{P}$-a.e.\ $\omega$,
\begin{align}
& E_{[\epin x]}\left[e^{\sum_{i=0}^{[\epin t]-1} [\beta V(T_{X_i}\omega) + (\theta + \alpha_i([\epin t],\omega,X_i))Z_{i+1}]}\right]\nonumber\\
&\quad = E_0\left[e^{\sum_{i=0}^{[\epin t]-1} [\beta V(T_{[\epin x] + X_i}\omega) + (\theta + \alpha_i([\epin t],\omega,[\epin x] + X_i))Z_{i+1} + F_{\beta,\theta-c}(T_{[\epin x] + X_i}\omega,Z_{i+1})]}\right]e^{o(\epin t)}\nonumber\\
&\quad = E_0\left[e^{\sum_{i=0}^{[\epin t]-2} [\beta V(T_{[\epin x] + X_i}\omega) + (\theta + \alpha_i([\epin t],\omega,[\epin x] + X_i))Z_{i+1} + F_{\beta,\theta-c}(T_{[\epin x] + X_i}\omega,Z_{i+1})]}\right.\nonumber\\
&\hspace{14mm}\left.\times g_{\beta,\theta-c}(T_{[\epin x] + X_{[\epin t]-1}}\omega,\theta + \alpha_{[\epin t]-1}([\epin t],\omega,[\epin x] + X_{[\epin t]-1}))\right]e^{o(\epin t)}\nonumber\\
&\quad \ge E_0\left[e^{\sum_{i=0}^{[\epin t]-2} [\beta V(T_{[\epin x] + X_i}\omega) + (\theta + \alpha_i([\epin t],\omega,[\epin x] + X_i))Z_{i+1} + F_{\beta,\theta-c}(T_{[\epin x] + X_i}\omega,Z_{i+1})]}\right]e^{\Lambda_\beta(\theta - c) + o(\epin t)}\nonumber\\
&\quad \ge \cdots \ge e^{[\epin t]\Lambda_\beta(\theta - c) + o(\epin t)}.\nonumber
\end{align}
Here, the first equality follows from Lemma \ref{cencoclem} (in Appendix \ref{app_cocycle}). Recalling \eqref{dualforminf}, we conclude that
\begin{equation}\label{LBpart2}
\overline H_{\delta,\beta}^L(\theta,t,x) \ge t(\Lambda_\beta(\theta - c) - \log\cosh(c)) + \theta x.
\end{equation}

\subsubsection{Lower bound when $0 < \theta < c$ and $\beta < \Lambda_\beta(\theta - c) \le \log\cosh(c)$}\label{bundanibaret}

For every $\omega\in\Omega$,
\begin{align*}
(\theta -c) - F_{\beta,\theta-c}(\omega,-1) \ge \log E_0\left[e^{-\Lambda_\beta(\theta - c)\tau_1}\one_{\{\tau_1<\infty\}}\right] &\ge \log E_0\left[e^{-\log\cosh(c)\tau_1}\one_{\{\tau_1<\infty\}}\right]\\
&= \log\left(\cosh(c) - \sqrt{\cosh^2(c) - 1}\right) = -c.
\end{align*}
Here, the first inequality follows from \eqref{fboundsrec}, and the first equality is shown in \eqref{duzel1} (in Appendix \ref{app_simple}). Therefore,
$$\theta - F_{\beta,\theta-c}(\omega,-1) \ge 0 \ge -(\theta - F_{\beta,\theta-c}(T_1\omega,-1)) = -\theta - F_{\beta,\theta-c}(\omega,1),$$
and \eqref{budaynim} follows from Lemma \ref{derin}. Hence, the argument immediately below \eqref{budaynim} is applicable, and
$$E_{[\epin x]}\left[e^{\sum_{i=0}^{[\epin t]-1} [\beta V(T_{X_i}\omega) + (\theta + \alpha_i([\epin t],\omega,X_i))Z_{i+1}]}\right] \ge e^{[\epin t]\Lambda_\beta(\theta - c) + o(\epin t)}$$
for every $\ep>0$, $t>0$, $x\in\mathbb{R}$, $\alpha\in\mathcal{A}_{[\epin t]}(c)$ and $\mathbb{P}$-a.e.\ $\omega$. Recalling \eqref{dualforminf} as before, we conclude that
\begin{equation}\label{LBpart3}
\overline H_{\delta,\beta}^L(\theta,t,x) \ge t(\Lambda_\beta(\theta - c) - \log\cosh(c)) + \theta x.
\end{equation}

\subsubsection{Lower bound when $0 < \theta < c$ and $\beta < \log\cosh(c) < \Lambda_\beta(\theta - c)$}\label{bidebuvar}

It follows from Proposition \ref{temelsaf}(a,b,c) and the intermediate value theorem that there exists a unique $\bar\theta(\beta,c)\in(0,c)$ such that
$$\Lambda_\beta(\bar\theta(\beta,c)-c) = \log\cosh(c).$$
By Proposition \ref{temelsaf}(a), the map $\beta\mapsto\bar\theta(\beta,c)$ is increasing for $\beta\in(0,\log\cosh(c))$, with $\bar{\theta}(0^+,c) = 0$. For every $\theta\in(0,\bar\theta(\beta,c))$, there is a unique $\bar\beta = \bar\beta(\theta,c) \in(0,\beta)$ such that $\theta = \bar\theta(\bar\beta,c)$. Using these quantities, we get the following bound: for every $t>0$ and $x\in\mathbb{R}$,
\begin{equation}
\begin{aligned}\label{LBpart4}
\overline H_{\delta,\beta}^L(\theta,t,x) \ge \overline H_{\delta,\bar\beta}^L(\theta,t,x) &\ge t(\Lambda_{\bar\beta}(\theta - c) - \log\cosh(c)) + \theta x\\
&= t(\Lambda_{\bar\beta}(\bar\theta(\bar\beta,c) - c) - \log\cosh(c)) + \theta x = \theta x.
\end{aligned}
\end{equation}
Here, the first inequality uses the fact that $V(\cdot)\ge0$, and the second inequality follows from \eqref{LBpart3} which is applicable since $\bar\beta < \Lambda_{\bar\beta}(\theta - c) = \log\cosh(c)$.

\subsubsection{Lower bound when $\theta = 0$}

Since $V(\cdot)\ge0$, it is clear from \eqref{homliminf} that, for every $t>0$ and $x\in\mathbb{R}$,
\begin{equation}\label{LBpart5}
\overline H_{\delta,\beta}^L(0,t,x)\ge 0.
\end{equation}

\subsection{The effective Hamiltonian}

\begin{proof}[Proof of Theorem \ref{thmweak}]
If $0\le\theta\le c$, then the bounds \eqref{UBpart2} and \eqref{LBpart1} match for every $t>0$ and $x\in\mathbb{R}$,
\begin{equation}\label{rahkoc1}
\overline H_{\delta,\beta}^L(\theta,t,x) = \overline H_{\delta,\beta}^U(\theta,t,x) = t(\beta - \log\cosh(c)) + \theta x,
\end{equation}
and taking the infimum in \eqref{beforelimit} over the set $\{\pieL:\,0<h<h_0,\ \ell>\ell_0\}$ for any $h_0>0$ and $\ell_0\in\mathbb{N}$ does not change the limit in \eqref{homlimit}. (Regarding the choice of $x_*$, see Remark \ref{dundundur}.)

If $\theta \ge 0$ and $\Lambda_\beta(\theta-c) = \beta$, then the bounds \eqref{UBpart1} and \eqref{LBpart1} match for every $t>0$ and $x\in\mathbb{R}$,
\begin{equation}\label{rahkoc2} 
\overline H_{\delta,\beta}^L(\theta,t,x) = \overline H_{\delta,\beta}^U(\theta,t,x) = t(\beta - \log\cosh(c)) + \theta x,
\end{equation}
and $\pileft$ is asymptotically optimal as $\ep\to0$.

If $\theta > c$ and $\Lambda_\beta(\theta-c) > \beta$, then the bounds \eqref{UBpart1} and \eqref{LBpart2} match for every $t>0$ and $x\in\mathbb{R}$,
\begin{equation}\label{rahkoc3}
\overline H_{\delta,\beta}^L(\theta,t,x) = \overline H_{\delta,\beta}^U(\theta,t,x) = t(\Lambda_\beta(\theta-c) - \log\cosh(c)) + \theta x,
\end{equation}
and $\pileft$ is asymptotically optimal as $\ep\to0$.

If $\theta<0$, the analogous results follow from symmetry. The existence of the limit in \eqref{homlimit} and the validity of the identity in \eqref{simdigel} follow immediately from \eqref{rahkoc1}, \eqref{rahkoc2} and \eqref{rahkoc3}. Finally, setting $t=1$ and $x=0$, we deduce \eqref{weaklimit}.
\end{proof}

\begin{proof}[Proof of Theorem \ref{thmstrong}]
	
Recall from Section \ref{bidebuvar} that there exists a unique $\bar\theta(\beta,c)\in(0,c)$ such that
$$\Lambda_\beta(\bar\theta(\beta,c)-c) = \log\cosh(c).$$

If $0\le\theta < \bar\theta(\beta,c)$, then the bounds \eqref{UBpart2}, \eqref{LBpart4} and \eqref{LBpart5} match for every $t>0$ and $x\in\mathbb{R}$,
\begin{equation}\label{sunkrc1} 
\overline H_{\delta,\beta}^L(\theta,t,x) = \overline H_{\delta,\beta}^U(\theta,t,x) = \theta x,
\end{equation}
and taking the infimum in \eqref{beforelimit} over the set $\{\pieL:\,0<h<h_0,\ \ell>\ell_0\}$ for any $h_0>0$ and $\ell_0\in\mathbb{N}$ does not change the limit in \eqref{homlimit}. (Regarding the choice of $x_*$, see Remark \ref{dundundur}.)

If $\theta \ge 0$ and $\Lambda_\beta(\theta-c) = \beta$, then the bounds \eqref{UBpart1} and \eqref{LBpart1} match for every $t>0$ and $x\in\mathbb{R}$,
\begin{equation}\label{sunkrc2} 
\overline H_{\delta,\beta}^L(\theta,t,x) = \overline H_{\delta,\beta}^U(\theta,t,x) = t(\beta - \log\cosh(c)) + \theta x,
\end{equation}
and $\pileft$ is asymptotically optimal as $\ep\to0$.

If $\theta \ge \bar\theta(\beta,c)$ and $\Lambda_\beta(\theta-c) > \beta$, then the bounds \eqref{UBpart1}, \eqref{LBpart2} and \eqref{LBpart3} match for every $t>0$ and $x\in\mathbb{R}$,
\begin{equation}\label{sunkrc3} 
\overline H_{\delta,\beta}^L(\theta,t,x) = \overline H_{\delta,\beta}^U(\theta,t,x) = t(\Lambda_\beta(\theta-c) - \log\cosh(c)) + \theta x,
\end{equation}
and $\pileft$ is asymptotically optimal as $\ep\to0$.

If $\theta<0$, the analogous results follow from symmetry. The existence of the limit in \eqref{homlimit} and the validity of the identity in \eqref{simdigel} follow immediately from \eqref{sunkrc1}, \eqref{sunkrc2} and \eqref{sunkrc3}. Finally, setting $t=1$ and $x=0$, we deduce \eqref{stronglimit}.
\end{proof}

\section{Homogenization of the Bellman equation}\label{sechomo}

We start with a lemma which excludes the full control regime.

\begin{lemma}\label{cansene}
For every $\omega\in\Omega$, $\delta\in[0,1)$, $\beta > 0$ and $\theta\in\mathbb{R}$, the function $u(\cdot,\cdot,\omega) = u(\cdot,\cdot,\omega\,|\,\delta,\beta,\theta)$ (defined in \eqref{beforelimit}) satisfies the following Lipschitz condition: for every $m,n\in\mathbb{N}$ and $x,y\in\mathbb{Z}$, 
\begin{equation}\label{sakinolzen}
|u(n,x,\omega) - u(m,y,\omega)| \le (\beta + |\theta|)|n-m| + (\beta + |\theta| - \textstyle{\log(\frac{1-\delta}{2})})|x-y|.
\end{equation}
\end{lemma}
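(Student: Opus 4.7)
The plan is to establish separate Lipschitz estimates in $n$ and in $x$ and combine them via the triangle inequality
$$|u(n,x,\omega) - u(m,y,\omega)| \le |u(n,x,\omega) - u(m,x,\omega)| + |u(m,x,\omega) - u(m,y,\omega)|.$$
Thus it suffices to show, for all $n\ge 0$, $x\in\mathbb{Z}$ and $\omega\in\Omega$, the time Lipschitz bound $|u(n+1,x,\omega) - u(n,x,\omega)| \le \beta + |\theta|$ and the space Lipschitz bound $|u(n,x+1,\omega) - u(n,x,\omega)| \le \beta + |\theta| - \log\tfrac{1-\delta}{2}$, and then iterate.

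For the time estimate, the upper inequality $u(n+1,x,\omega) \le u(n,x,\omega) + \beta + |\theta|$ follows by taking an $\epsilon$-optimal policy $\pi^{\ast}\in\mathcal{P}_n(\delta)$ for $u(n,x,\omega)$, extending it to a policy in $\mathcal{P}_{n+1}(\delta)$ by appending an arbitrary admissible last transition, conditioning on $\sigma(X_0,\ldots,X_n)$, and bounding the extra factor by $e^{\beta + |\theta|}$ using $V(T_{X_n}\omega)\le 1$ together with $E[e^{\theta(X_{n+1}-X_n)}\mid\sigma(X_0,\ldots,X_n)]\le e^{|\theta|}$. The reverse inequality $u(n+1,x,\omega) \ge u(n,x,\omega) - |\theta|$ is symmetric: take an $\epsilon$-optimal $\pi^{\ast}\in\mathcal{P}_{n+1}(\delta)$, restrict to its first $n$ components (which still form an admissible policy in $\mathcal{P}_n(\delta)$), and use $V\ge 0$ together with $|X_{n+1}-X_n|\le 1$ to discard the last step in the exponent at a cost of at most $e^{|\theta|}$.

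For the space estimate, the Bellman recursion \eqref{kisamioldu} is the key tool. For any $a,b\in\mathbb{R}$ and $q\in[\tfrac{1-\delta}{2},\tfrac{1+\delta}{2}]$, since $\min(q,1-q)\ge\tfrac{1-\delta}{2}$,
$$qe^a + (1-q)e^b \ge \tfrac{1-\delta}{2}\max(e^a,e^b).$$
Substituting this into \eqref{kisamioldu} and using $V\ge 0$ yields
$$u(n+1,x,\omega) \ge \log\tfrac{1-\delta}{2} + \max\bigl(u(n,x+1,\omega),\,u(n,x-1,\omega)\bigr).$$
Combining with the time bound $u(n+1,x,\omega) \le u(n,x,\omega) + \beta + |\theta|$ already established gives
$$u(n,x+1,\omega) \le u(n,x,\omega) + \beta + |\theta| - \log\tfrac{1-\delta}{2}.$$
The reverse inequality $u(n,x,\omega) \le u(n,x+1,\omega) + \beta + |\theta| - \log\tfrac{1-\delta}{2}$ follows by applying the same chain of inequalities centered at $x+1$ instead of $x$, which produces a lower bound on $u(n+1,x+1,\omega)$ involving $u(n,x,\omega)$.

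The main subtlety is the space estimate: a naive induction on $n$ applied directly to $|u(n,x+1,\omega) - u(n,x,\omega)|$ via \eqref{kisamioldu} would accumulate an additive error of $\beta$ at each iteration and fail. The device of trading one time step for one space step through the Bellman equation, and then absorbing the resulting time difference by means of the already proved time Lipschitz bound, is what yields an $\omega$-uniform and $n$-uniform constant. The exclusion of $\delta = 1$ is essential, since $\log\tfrac{1-\delta}{2}$ diverges there.
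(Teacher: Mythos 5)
Your proof is correct and follows essentially the same strategy as the paper's: both first establish the time Lipschitz bound $|u(n,x,\omega)-u(m,x,\omega)|\le(\beta+|\theta|)|n-m|$, and both exploit the fact that one controlled step costs at most $-\log\tfrac{1-\delta}{2}$ in probability and at most $\beta+|\theta|$ in the exponent. The only real difference is bookkeeping: the paper proves the mixed inequality $u(m+|x-y|,x,\omega)-u(m,y,\omega)\ge\log\tfrac{1-\delta}{2}|x-y|$ directly from the definition \eqref{beforelimit} by restricting to the deterministic-march event, and then combines with the time bound at the end; you derive the one-step version of that same inequality from the Bellman recursion \eqref{kisamioldu} and immediately trade it against the time bound to get a pure space Lipschitz constant $\beta+|\theta|-\log\tfrac{1-\delta}{2}$ before iterating. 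Both routes give the same constant, and your remark about why a naive induction on $|u(n,x+1,\omega)-u(n,x,\omega)|$ would accumulate an extra $\beta$ per step is a correct and useful observation.
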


\begin{proof}
It follows easily from \eqref{beforelimit} that
\begin{align*}
|u(n,x,\omega) - u(m,x,\omega)| &\le (\beta + |\theta|)|n-m|\quad\text{and}\\
u(m + |x-y|,x,\omega) - u(m,y,\omega) &\ge \textstyle{\log(\frac{1-\delta}{2})|x-y|}
\end{align*}
for every $m,n\in\mathbb{N}$ and $x,y\in\mathbb{Z}$. The second inequality is obtained by considering the event that the particle marches from $x$ to $y$ in $|x-y|$ steps. A suitable combination of these inequalities gives \eqref{sakinolzen}.
\end{proof}

\begin{proof}[Proof of Theorem \ref{unifhom2}]
If $\delta\in[0,1]$, $\beta>0$ and $\theta\in\mathbb{R}$, then for $\mathbb{P}$-a.e.\ $\omega$, $$\lim_{\ep\to0}u_\ep(t,x,\omega) = u_o(t,x) = t\overline H_{\delta,\beta}(\theta) + \theta x$$ for every $t>0$ and $x\in\mathbb{R}$ by Theorems \ref{thmno}, \ref{thmfull}, \ref{thmweak} and \ref{thmstrong}. Moreover, at $t=0$,
$$\lim_{\ep\to0}u_\ep(0,x,\omega) = \lim_{\ep\to0}\ep\theta[\epin x] = \theta x = u_o(0,x).$$
It remains to improve this pointwise limit on $[0,\infty)\times\mathbb{R}$ to a uniform limit on compact subsets of $[0,\infty)\times\mathbb{R}$.

If $\delta\in[0,1)$, then Lemma \ref{cansene} gives the following bounds: for every $\ep>0$, $s,t\ge0$ and $x,y\in\mathbb{R}$,
\begin{equation} 
\begin{aligned}\label{lafstok2}
|u_\ep(t,x,\omega) - u_\ep(s,y,\omega)| 
&\le \ep(\beta + |\theta|)|[\epin t]-[\epin s]| + \ep(\beta + |\theta| - \textstyle{\log(\frac{1-\delta}{2})})|[\epin x] - [\epin y]|\\
&\le (\beta + |\theta|)|t-s| + (\beta + |\theta| - \textstyle{\log(\frac{1-\delta}{2})})|x-y| + \ep(2\beta + 2|\theta| - \textstyle{\log(\frac{1-\delta}{2})}).
\end{aligned}
\end{equation}
For every $\ep'>0$, $t_{max}>0$ and $B>0$, partition the rectangle $[0,t_{max}]\times [-B,B]$ into finitely many (say $N$) identical squares with side length $\frac{\ep'}{12}(\beta + |\theta| - \textstyle{\log(\frac{1-\delta}{2})})^{-1}$. Fix a point $(s_i,y_i)$ in the $i$th square. By pointwise convergence, there exists an $\ep_i>0$ such that $\ep_i(2\beta + 2|\theta| - \textstyle{\log(\frac{1-\delta}{2})}) < \frac{\ep'}{6}$ and $|u_\ep(s_i,y_i,\omega) - u_o(s_i,y_i)| < \ep'/3$ whenever $0<\ep<\ep_i$. If $(t,x)$ is any point in the $i$th square, then
\begin{align*}
|u_\ep(t,x,\omega) - u_o(t,x)| &\le |u_\ep(t,x,\omega) - u_\ep(s_i,y_i,\omega)| + |u_\ep(s_i,y_i,\omega) - u_o(s_i,y_i)| + |u_o(s_i,y_i) - u_o(t,x)|\\
&< \left(\frac{\ep'}{12} + \frac{\ep'}{12} + \frac{\ep'}{6}\right) + \frac{\ep'}{3} + \left(\frac{\ep'}{12} + \frac{\ep'}{12} + \frac{\ep'}{6}\right) = \ep'
\end{align*}
by \eqref{lafstok2}. Taking $\ep_0 = \min\{\ep_1,\ldots,\ep_N\}$ concludes the proof of uniform convergence on $[0,t_{max}]\times[-B,B]$.

If $\delta = 1$, then the walk under bang-bang policies is not elliptic and Lemma \ref{cansene} is not applicable. Therefore, we prove the desired uniform convergence by revisiting Sections \ref{apirband} and \ref{lovirband} where we obtained upper and lower bounds for $u_\ep(t,x,\omega)$ with error bounds. Fix $t_{max}>0$ and $B>0$.
\begin{itemize}
	\item For every $h\in(0,1)$, $a>0$, $\mathbb{P}$-a.e.\ $\omega$ and sufficiently small $\ep>0$ (depending on $\omega,h,a,B$), \eqref{mazharfu} gives
	$$u_\ep(t,x,\omega) \le \theta x + t_{max}\beta h + a(\beta + |\theta|) + \ep|\theta|$$
	uniformly for $(t,x)\in[0,t_{max}]\times[-B,B]$. Here, the uniformity in $x$ comes from Lemma \ref{gayettehos}.
	\item When $\theta\ge 0$, for $\mathbb{P}$-a.e.\ $\omega$, \eqref{fuatoz} gives
    \begin{align*}
    u_\ep(t,x,\omega) &\le t(\beta\mathbb{E}[V(\cdot)] - \theta) + \theta x - \ep\sum_{i=0}^{[\epin t] - 1}G_\beta(T_{[\epin x] - i}\omega,-1) + \ep(\beta + 2|\theta|)\\
    &= t(\beta\mathbb{E}[V(\cdot)] - \theta) + \theta x + \ep o(\epin t_{max}) + \ep(\beta + 2|\theta|)
    \end{align*}
    uniformly for $(t,x)\in[0,t_{max}]\times[-B,B]$. Here, the uniformity in $x$ comes from Lemma \ref{cencoclem} (in Appendix \ref{app_cocycle}) which is applicable since $G_\beta$ (defined in \eqref{ptesi}) is a bounded and centered cocycle.
    \item When $\theta\ge\beta\mathbb{E}[V(\cdot)]$, for $\mathbb{P}$-a.e.\ $\omega$, \eqref{takpat} gives
    \begin{align*}
    u_\ep(t,x,\omega) &\ge \ep[\epin t](\beta\mathbb{E}[V(\cdot)] - \theta) + \ep\theta[\epin x] + \ep o(\epin t_{max})\\
    &\ge t(\beta\mathbb{E}[V(\cdot)] - \theta) + \theta x + \ep o(\epin t_{max}) - \ep(\beta + 2|\theta|)
    \end{align*}
    uniformly for $(t,x)\in[0,t_{max}]\times[-B,B]$. Again, the uniformity in $x$ comes from Lemma \ref{cencoclem}.
    \item When $0<\theta<\beta\mathbb{E}[V(\cdot)]$, \eqref{anlamyap} and the lower bound in the previous case give
    \begin{align*}
    u_\ep(t,x,\omega\,|\,\delta,\beta,\theta) \ge u_\ep(t,x,\omega\,|\,\delta,\bar\beta,\theta) &\ge t(\bar\beta\mathbb{E}[V(\cdot)] - \theta) + \theta x + \ep o(\epin t_{max}) - \ep(\bar\beta + 2|\theta|)\\
    &= \theta x + \ep o(\epin t_{max}) - \ep(\bar\beta + 2|\theta|)
    \end{align*}
    uniformly for $(t,x)\in[0,t_{max}]\times[-B,B]$.
    \item When $\theta = 0$, \eqref{anlamboz} gives
    $$u_\ep(t,x,\omega)\ge 0$$
    uniformly for $(t,x)\in[0,\infty)\times\mathbb{R}$.
\end{itemize}
Combining these upper and lower bounds, uniform convergence on $[0,t_{max}]\times[-B,B]$ follows.
\end{proof}

\section*{Acknowledgments}
We thank E.\ Kosygina for suggesting to us that nonconvex homogenization can be of interest in the discrete setup,
for valuable discussions which motivated this project,
for her help in formulating the discrete problem treated here,
and for her friendly and useful feedback on an earlier version of this manuscript.

\section*{Appendices}

\appendices


\section{Proof of existence of the tilted free energy via subadditivity}\label{app_subadd}

With future use in mind, we consider a more general model of RW in random potential on $\mathbb{Z}^d$ with $d\ge1$. The proof of Theorem \ref{pargoz} that we give below follows \cite[Section 2]{Var2003} closely and does not require any additional effort due to this generality.

Denote by $(X_i)_{i\ge0}$ the SSRW on $\mathbb{Z}^d$ with $Z_{i+1} := X_{i+1} - X_i\in U := \{\pm e_1,\ldots,\pm e_d\}$. Let $(\Omega,\mathcal{F},\mathbb{P})$ be a probability space on which a collection $\{T_z: \Omega\to\Omega\}_{z\in U}$ of invertible measure-preserving transformations act ergodically. Fix a bounded and measurable function $\Psi:\Omega\times U\to\mathbb{R}$. For every $n\in\mathbb{N}\cup\{0\}$, $x,y\in\mathbb{Z}^d$ and $\omega\in\Omega$, define
$$f(n,x,y,\omega) = E_x\left[e^{\sum_{i=0}^{n-1}\Psi(T_{X_i}\omega,Z_{i+1})}\one_{\{X_n = y\}}\right]\qquad\text{and}\qquad F(n,x,\omega) = E_x\left[e^{\sum_{i=0}^{n-1}\Psi(T_{X_i}\omega,Z_{i+1})}\right].$$
Here, $E_x$ stands for expectation with respect to the law of $(X_i)_{i\ge0}$ when $X_0 = x$.

\begin{theorem}\label{pargoz}
	For $\mathbb{P}$-a.e.\ $\omega$, the limit
	\begin{equation}\label{hayitdisi}
	\Lambda(\Psi) = \lim_{n\to\infty}\frac1{n}\log F(n,0,\omega)
	\end{equation}
	exists. Moreover, $\Lambda(\Psi)$ is a deterministic quantity.
\end{theorem}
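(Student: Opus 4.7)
The plan is to follow the shape-theorem route: first prove the existence of a deterministic limit for the point-to-point partition function $f(n,0,y,\omega)$ along each macroscopic direction, and then recover $\Lambda(\Psi)$ by optimizing over directions. Let $g(n,y,\omega) := \log f(n,0,y,\omega)\in[-\infty,\infty)$. The Markov property at time $n$ gives
\[
f(n+m,0,z,\omega) \;=\; \sum_w f(n,0,w,\omega)\, f(m,0,z-w,T_w\omega) \;\geq\; f(n,0,y,\omega)\, f(m,0,z-y,T_y\omega)
\]
for every $y\in\mathbb{Z}^d$, so $g$ satisfies the superadditive cocycle inequality
\[
g(n+m, z, \omega) \;\geq\; g(n, y, \omega) + g(m, z-y, T_y\omega).
\]

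Next, I fix a rational direction $\xi$ in the interior of the unit $\ell^1$-ball and a period $N\in\mathbb{N}$ such that $N\xi\in\mathbb{Z}^d$ and $|N\xi|_1+N$ is even, so that $f(N,0,N\xi,\omega)>0$ $\mathbb{P}$-a.s. Applying Kingman's subadditive ergodic theorem to the sequence $-g(kN, kN\xi, \omega)$ along the $\mathbb{Z}$-action generated by $T_{N\xi}$, with integrability furnished by the uniform bound $|g(n,y,\omega)|\leq n(\|\Psi\|_\infty + \log(2d))$ whenever $y$ is reachable in $n$ steps, yields an a.s.\ limit
\[
\alpha(\xi,\omega) \;:=\; -\lim_{k\to\infty} \tfrac{1}{kN}\, g(kN, kN\xi, \omega),
\]
which is a priori only $T_{N\xi}$-invariant. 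A shift-averaging argument that exploits the ergodicity of the full $\mathbb{Z}^d$-action (or equivalently a multi-parameter subadditive ergodic theorem) promotes $\alpha(\xi,\cdot)$ to a deterministic constant $\alpha(\xi)$. The superadditive inequality then gives convexity of $\alpha$ in $\xi$, and hence Lipschitz continuity on the interior of its domain; combined with the elementary one-step oscillation bound $|g(n+1,y,\omega)-g(n,y',\omega)|\leq\|\Psi\|_\infty+\log(2d)$ for $|y-y'|_1\leq 1$, this extends the limit to all admissible $\xi$ and fills in the intermediate time steps, giving uniform convergence on compacts in the open unit ball.

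To recover $\Lambda(\Psi)$, I decompose $F(n,0,\omega) = \sum_{y} f(n,0,y,\omega)$, which has at most $C n^d$ nonzero terms, and sandwich
\[
\max_y f(n,0,y,\omega)\;\leq\; F(n,0,\omega)\;\leq\; C n^d\max_y f(n,0,y,\omega).
\]
Taking $\tfrac1n\log$ and invoking the uniform convergence above yields
\[
\lim_{n\to\infty} \tfrac1n \log F(n,0,\omega) \;=\; \sup_{\xi} (-\alpha(\xi)) \;=\; -\inf_{\xi} \alpha(\xi) \;=:\; \Lambda(\Psi),
\]
which is the deterministic constant claimed by the theorem.

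The main obstacle is the step that promotes $\alpha(\xi,\omega)$ from a merely $T_{N\xi}$-invariant random variable to a deterministic constant: the hypothesis of the theorem is ergodicity of the full $\mathbb{Z}^d$-action, which does not automatically imply ergodicity of the $\mathbb{Z}$-subaction generated by a single shift $T_{N\xi}$. This is precisely the technical point handled in \cite[Section~2]{Var2003}, and is the reason the theorem statement separately emphasises that $\Lambda(\Psi)$ is deterministic. A secondary, purely bookkeeping obstacle is the SSRW parity constraint, which forces one to work along subsequences $n=kN$ with $N$ chosen so that the target $N\xi$ has compatible parity, and to use the one-step Lipschitz estimate on $g$ to interpolate between these times.
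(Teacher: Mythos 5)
Your proposal follows the \emph{direct} shape-theorem route (work with $g(n,y,\omega)=\log f(n,0,y,\omega)$, apply Kingman along fixed rational directions, then promote the $T_{N\xi}$-invariant limit to a constant and interpolate in time), which is closer in spirit to Zerner's Lyapunov-exponent approach than to the paper's. The paper instead \emph{relaxes the time constraint}: it introduces
$f_c(t,x,y,\omega)=\sup_{n\ge0}\bigl[f(n,x,y,\omega)e^{-c|n-t|}\bigr]$
and checks the four hypotheses (ob1)--(ob4) of \cite[Theorem 2.1]{Var2003}, which then hands over, in one stroke, a deterministic, concave, Lipschitz shape function $\lambda_c(\xi)$ together with the a.s.\ uniform convergence \eqref{importel}. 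The relaxation absorbs the parity and time-interpolation bookkeeping that your proposal must handle by hand, at the cost of introducing a second parameter $c$; the paper pays that cost back via Sion's minimax theorem (to swap $\inf_c$ and $\sup_\xi$) plus a short argument that lets $c\to\infty$ undoes the relaxation. Your route avoids the minimax step but must do the parity/interpolation and deterministicity work directly.

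Two points in your write-up deserve a warning. First, the claimed two-sided one-step oscillation bound $|g(n+1,y,\omega)-g(n,y',\omega)|\le\|\Psi\|_\infty+\log(2d)$ for $|y-y'|_1\le1$ is not correct as stated: decomposing at time $n$ gives $g(n+1,y,\omega)\ge g(n,y',\omega)+\psi_o-\log(2d)$ for any neighbor $y'$, but the upper bound only controls $g(n+1,y,\omega)$ by the \emph{maximum} of $g(n,\cdot,\omega)$ over the $2d$ neighbors of $y$, not by any single $g(n,y',\omega)$. This one-sidedness is exactly the kind of asymmetry that the $f_c$ relaxation is designed to erase, and it means your interpolation step (going from $n=kN$ to general $n$ and filling in the limit) needs a more careful argument than a Lipschitz estimate. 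Second, you correctly flag the promotion of $\alpha(\xi,\cdot)$ from $T_{N\xi}$-invariant to deterministic as the key difficulty and defer it to \cite[Section 2]{Var2003}; but that is not a minor technical lemma one can quote around your Kingman argument --- it is precisely the content of \cite[Theorem 2.1]{Var2003}, which already packages the shape theorem, the Lipschitz continuity in $\xi$, and the determinism together. Said differently, if you invoke \cite{Var2003} for the determinism step, you may as well invoke it in the form the paper does (via $f_c$), since that is the form in which the cited theorem is actually stated. Your sketch is therefore better read as a reconstruction of (part of) Varadhan's argument rather than an independent shortcut, and the gaps you acknowledge are genuinely nontrivial.
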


\begin{proof}
	Assume without loss of generality that $\Psi:\Omega\times U\to[\psi_o,0]$ for some $\psi_o>-\infty$. (Otherwise, we can subtract an appropriate constant from $\Psi$, take the limit in \eqref{hayitdisi}, and add the constant back.)
	For every $c>0$, $t\ge0$, $x,y\in\mathbb{Z}^d$ and $\omega\in\Omega$, define
	\begin{equation}\label{defsifir}
	f_c(t,x,y,\omega) = \sup_{n\ge0}\left[f(n,x,y,\omega)e^{-c|n-t|}\right].
	\end{equation}
	We make several observations. First,
	\begin{equation}\label{ob1}
	-\infty<\log f_c(t,x,y,\omega) \le 0
	\end{equation}
	since $\Psi(\cdot,\cdot) \le 0$. Second, it is clear from \eqref{defsifir} that
	\begin{align}
	&\;\log f_c(t,x,y,\omega) = \log f_c(t,0,y-x,T_x\omega)\qquad\text{and}\label{ob2a}\\
	&|\log f_c(t,x,y,\omega) - \log f_c(t',x,y,\omega)| \le c|t - t'|.\label{ob2b}
	\end{align}
	Third, for every $z,z'\in U$,
	\begin{align*}
	f_c(t,x+z,y+z',\omega) &= \sup_{n\ge0}\left[f(n,x+z,y+z',\omega)e^{-c|n-t|}\right]\\
	&\ge \sup_{n\ge0}\left[f(n+2,x+z,y+z',\omega)e^{-c|n+2-t|}\right]\\
	&\ge \left(\frac{e^{\psi_o}}{2d}\right)^2\sup_{n\ge0}\left[f(n,x,y,\omega)e^{-c|n+2-t|}\right]\\
	&\ge \left(\frac{e^{\psi_o-c}}{2d}\right)^2f_c(t,x,y,\omega)
	\end{align*}
	since the probability of moving from $x+z$ to $x$ (resp.\ from $y$ to $y+z'$) in one step is equal to $\frac1{2d}$. 
	Therefore, there exists a constant $c' = c + |\psi_o| + \log(2d) > 0$ such that
	\begin{equation}\label{ob3}
	|\log f_c(t,x,y,\omega) - \log f_c(t,x',y',\omega)| \le c'\left(|x - x'|_1 + |y - y'|_1\right),
	\end{equation}
	where $|\cdot|_1$ denotes the $\ell_1$-norm on $\mathbb{R}^d$. Fourth, 
	\begin{equation}\label{ob4}
	\log f_c(t+s,0,y,\omega) \ge \log f_c(t,0,x,\omega) + \log f_c(s,x,y,\omega)
	\end{equation}
	since
	\begin{align*}
	f_c(t+s,0,y,\omega) &= \sup_{n\ge0}\left[f(n,0,y,\omega)e^{-c|n-(t+s)|}\right]\\
	&= \sup_{n,m\ge0}\left[f(n+m,0,y,\omega)e^{-c|(n-t)+(m-s)|}\right]\\
	&= \sup_{n,m\ge0}\left[\sum_{x'}f(n,0,x',\omega)f(m,x',y,\omega)e^{-c|(n-t)+(m-s)|}\right]\\
	&\ge \sup_{n,m\ge0}\left[f(n,0,x,\omega)f(m,x,y,\omega)e^{-c|n-t| - c|m-s|}\right]\\
	&= f_c(t,0,x,\omega)f_c(s,x,y,\omega).
	\end{align*}
	It follows from \cite[Theorem 2.1]{Var2003} (which is in turn based on Liggett's subadditive ergodic theorem \cite{Lig1985}) that \eqref{ob1} - \eqref{ob4} ensure the existence of a deterministic, Lipschitz continuous and concave function $\lambda_c:\mathbb{R}^d\to(-\infty,0]$ such that
	\begin{equation}\label{importel}
	\mathbb{P}\left(\lim_{n\to\infty}\frac1{n}\log f_c(n,0,x_n,\omega) = \lambda_c(\xi)\ \text{for every $\xi\in\mathbb{R}^d$ and $(x_n)_{n\ge1}$ such that $\frac{x_n}{n}\to\xi$}\right) = 1.
	\end{equation}
	
	We are ready to establish upper and lower bounds that will imply the existence of the limit in \eqref{hayitdisi}. For every $c>0$ and $\mathbb{P}$-a.e.\ $\omega$,
	\begin{align*}
	\limsup_{n\to\infty}\frac1{n}\log F(n,0,\omega) &= \limsup_{n\to\infty}\frac1{n}\log\sum_{y\in\mathbb{Z}^d:\,|y|_1 \le n} f(n,0,y,\omega) = \limsup_{n\to\infty}\sup_{y\in\mathbb{Z}^d:\,|y|_1 \le n} \frac1{n}\log f(n,0,y,\omega)\\
	&\le \limsup_{n\to\infty}\sup_{y\in\mathbb{Z}^d:\,|y|_1 \le n} \frac1{n}\log f_c(n,0,y,\omega) = \sup_{\xi\in\mathbb{R}^d:\,|\xi|_1 \le 1}\lambda_c(\xi). 
	\end{align*}
	Here, the last equality follows from \eqref{importel} and the continuity of $\xi\mapsto\lambda_c(\xi)$ as in the proof of Varadhan's integral lemma (see \cite[Theorem 4.3.1]{DemZei2010}). It is clear from \eqref{defsifir} that $f_c(\cdot,\cdot,\cdot,\cdot)$ decreases as $c$ increases, and so does $\lambda_c(\xi)$. Therefore, $$\lambda(\xi) := \lim_{c\to\infty}\lambda_c(\xi) = \inf_{c>0}\lambda_c(\xi)\in[-\infty,0]$$ exists. Moreover, $c\mapsto\lambda_c(\xi)$ is convex since it is the limit of the supremum of a collection of linear functions. Using Sion's minimax theorem (see \cite{Kom1988}), we deduce the following upper bound:
	\begin{align*}
	\limsup_{n\to\infty}\frac1{n}\log F(n,0,\omega) &\le \inf_{c>0}\sup_{\xi\in\mathbb{R}^d:\,|\xi|_1 \le 1}\lambda_c(\xi) = \sup_{\xi\in\mathbb{R}^d:\,|\xi|_1 \le 1}\inf_{c>0}\lambda_c(\xi)\\
	&= \sup_{\xi\in\mathbb{R}^d:\,|\xi|_1 \le 1}\lambda(\xi) =: \Lambda(\Psi) \le 0.
	\end{align*}
	
	Obtaining a matching lower bound is equivalent to showing that
	\begin{equation}\label{dizdurs}
	\liminf_{n\to\infty}\frac1{n}\log F(n,0,\omega) \ge \lambda(\xi)
	\end{equation}
	for every $\xi\in\mathbb{R}^d$ such that $|\xi|_1\le 1$. There is nothing to prove if $\lambda(\xi) = - \infty$. (In fact, this case can be ruled out.) Assume $\lambda(\xi) = -\ell > - \infty$. Fix an arbitrary $\epsilon>0$ and choose $c \ge (\ell+1)\epsilon^{-1}$. Then, $\lambda_c(\xi) \ge -\ell$ by monotonicity in $c$. Recalling \eqref{defsifir} and \eqref{importel}, we see that
	$$\sup_{m\ge0}\left[f(m,0,x_n,\omega)e^{-c|m-n|}\right] \ge e^{-n\ell + o(n)}$$
	for $\mathbb{P}$-a.e.\ $\omega$ and every $(x_n)_{n\ge1}$ such that $\frac{x_n}{n}\to\xi$. If $|m-n|\ge n\epsilon$, then $$f(m,0,x_n,\omega)e^{-c|m-n|} \le e^{-nc\epsilon} \le e^{-n(\ell+1)}.$$ Therefore,
	$$e^{-n\ell + o(n)} \le \sup_{m\ge0:\,|m-n|<n\epsilon}\left[f(m,0,x_n,\omega)e^{-c|m-n|}\right] \le \sup_{m\ge0:\,|m-n|<n\epsilon}F(m,0,\omega).$$
	Observe that $|\log F(m,0,\omega) - \log F(n,0,\omega)| \le |m-n||\psi_o|$, which gives
	$$F(n,0,\omega) \ge e^{-n\ell -n\epsilon |\psi_o| + o(n)}.$$
	Since $\epsilon>0$ is arbitrary, the desired lower bound \eqref{dizdurs} follows.
\end{proof}

\begin{remark}\label{furrefvar}
	There are alternative proofs of Theorem \ref{pargoz}. One of the authors established in \cite{Yil2009} a so-called level-2 LDP from the point of view of the particle performing nearest-neighbor random walk in random environment (RWRE) on $\mathbb{Z}^d$, from which the existence of the limit in \eqref{hayitdisi} follows as a corollary by Varadhan's integral lemma. That 
	paper built upon the Ph.D.\ thesis of Rosenbluth \cite{Ros2006} who in turn adapted the work of Kosygina, Rezakhanlou and Varadhan \cite{KosRezVar2006} on the homogenization of second-order HJ stochastic PDEs 
	with convex Hamiltonians.
	This approach is certainly more technical than the short and subadditivity-based proof we gave above, but it has the advantage of providing two variational formulas for $\Lambda(\Psi)$ (see Appendix \ref{app_varfor} for these formulas in our setting). This result was subsequently generalized in \cite{RasSepYil2013} to random walks with arbitrary set of allowed steps (including directed walks). In the latter setting, Rassoul-Agha and Sepp\"al\"ainen \cite{RasSep2014} also gave a proof of existence via subadditivity. Finally, assuming the existence of $\Lambda(\Psi)$, several variational formulas for it were given in \cite{RasSepYil2017} via a potential-theoretic approach which results in much shorter proofs than those in \cite{Ros2006, Yil2009, RasSepYil2013}.
\end{remark}

\section{Centered cocycles and sublinearity of path sums}\label{app_cocycle}

\begin{definition}\label{cencocdef}
A function $F:\Omega\times\{-1,1\}\to\mathbb{R}$ is said to be a cocycle if $F(\cdot,1)$ is $\mathcal{F}$-measurable and $F(\omega,-1) = -F(T_{-1}\omega,1)$ for every $\omega\in\Omega$. $F$ is said to be a centered cocycle if $\mathbb{E}[F(\cdot,1)] = 0$.
\end{definition}
\corO{The set of centered cocycles is denoted by $\mathcal{K}_0$.}

\begin{lemma}\label{cencoclem}
  If \corO{$F\in \mathcal{K}_0$} is  bounded, then for every $B>0$ and $\mathbb{P}$-a.e.\ $\omega$,
$$\lim_{n\to\infty}\frac1{n}\sup\left\{\left|\sum_{i=0}^{n-1}F(T_{x_i}\omega,z_{i+1})\right|:\, |x|\le B,\ x_0 = [nx],\ z_{i+1} = x_{i+1} - x_i = \pm1\right\} = 0.$$
\end{lemma}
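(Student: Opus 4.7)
The plan is to exploit the cocycle identity $F(\omega,-1)=-F(T_{-1}\omega,1)$ to collapse the path sum into a quantity depending only on the endpoints $x_0$ and $x_n$, and then to invoke Birkhoff's ergodic theorem to upgrade pointwise sublinearity to uniformity over the prescribed range of endpoints.

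First, I would set $g(y):=F(T_y\omega,1)$ and rewrite a single step $x_i\to x_{i+1}$ as $g(x_i)$ when $z_{i+1}=+1$ and, by the cocycle identity, as $-F(T_{x_i-1}\omega,1)=-g(x_{i+1})$ when $z_{i+1}=-1$. Thus each traversal of the edge $\{y,y+1\}$ contributes $\pm g(y)$ according to its direction, and back-and-forth crossings of any edge cancel in the sum. Since the net number of rightward crossings of $\{y,y+1\}$ by a nearest-neighbor path from $x_0$ to $x_n$ equals $\one_{\{x_0\le y<x_n\}}-\one_{\{x_n\le y<x_0\}}$, the path sum telescopes to
$$\sum_{i=0}^{n-1}F(T_{x_i}\omega,z_{i+1})=S(x_n,\omega)-S(x_0,\omega),$$
where $S(k,\omega):=\sum_{y=0}^{k-1}g(y)$ for $k\ge 0$ and $S(k,\omega):=-\sum_{y=k}^{-1}g(y)$ for $k<0$. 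Because the admissible paths satisfy $|x_0|\le nB+1$ and $|x_n|\le n(B+1)+1$, the supremum in the statement is bounded above by $2\sup_{|y|\le n(B+1)+1}|S(y,\omega)|$, a quantity no longer depending on the path itself.

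Second, since $F(\cdot,1)$ is bounded and centered and $T$ is ergodic, Birkhoff's ergodic theorem applied to $T$ and to $T^{-1}$ yields $\lim_{|k|\to\infty}|k|^{-1}S(k,\omega)=0$ for $\mathbb{P}$-a.e.\ $\omega$. Given $\varepsilon>0$, for such $\omega$ there exists $N_0=N_0(\omega,\varepsilon)$ with $|S(k,\omega)|\le\varepsilon|k|$ whenever $|k|>N_0$. Splitting the supremum according to whether $|y|\le N_0$ or not, one obtains, for all sufficiently large $n$,
$$\frac{1}{n}\sup_{|y|\le n(B+1)+1}|S(y,\omega)|\;\le\;\frac{1}{n}\max_{|y|\le N_0}|S(y,\omega)|\;+\;\varepsilon(B+1)\;+\;o(1).$$
Taking $\limsup_{n\to\infty}$ gives a bound of $\varepsilon(B+1)$, and sending $\varepsilon\to 0$ finishes the argument. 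There is no real obstacle: the only non-trivial observation is that the cocycle structure converts a path-dependent sum into an endpoint difference, after which the uniformity in $|x|\le B$ is automatic from Birkhoff applied on scale $n$.
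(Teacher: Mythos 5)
Your proof is correct and follows essentially the same strategy as the paper: use the cocycle identity to telescope the path sum into an endpoint difference $S(x_n,\omega)-S(x_0,\omega)$, then invoke Birkhoff's ergodic theorem (applied to both $T$ and $T^{-1}$) to get sublinearity of the potential $S(\cdot,\omega)$, and finally upgrade this to uniformity over the allowed range of endpoints. The only minor difference is the last step: the paper records a Lipschitz bound on $f(\omega,\cdot)$ coming from boundedness of $F$ and combines it with Birkhoff evaluated along a grid of points, whereas you split the supremum directly at a fixed threshold $N_0(\omega,\varepsilon)$ and dispense with the Lipschitz interpolation entirely; your variant is a small simplification that reaches the same conclusion.
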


\begin{proof}
For every $y\in\mathbb{Z}$, define
$$f(\omega,y) = \begin{cases}\sum_{i=0}^{y-1}F(T_i\omega,1)&\ \text{if $y>0$,}\\0&\ \text{if $y=0$,}\\\sum_{i=y}^{-1}F(T_i\omega,1)&\ \text{if $y<0$.}\end{cases}$$
Since $F$ is bounded, there exists a $K>0$ such that
\begin{equation}\label{lembor1}
|f(\omega,y) - f(\omega,y')| \le K|y-y'|
\end{equation}
for every $y,y'\in\mathbb{Z}$. Since $F$ is centered, $f(\omega,y) = o(|y|)$ for $\mathbb{P}$-a.e.\ $\omega$ by the Birkhoff ergodic theorem. Hence, for every $\ep,B>0$ and $k\in\mathbb{N}$, there exists an $n_0(\omega,\ep,B,k)$ such that
\begin{equation}\label{lembor2}
\frac1{n}\left|f\left(\omega,\frac{jn(B+1)}{k}\right)\right| < \ep
\end{equation}
for every integer $j\in[-k,k]$ and $n\ge n_0$. Combining \eqref{lembor1} and \eqref{lembor2}, we deduce that
\begin{equation}\label{lembor3}
\lim_{n\to\infty}\frac1{n}\sup\{|f(\omega,y)|:\,|y|\le n(B+1)\} = 0.
\end{equation}
Since $F$ is a cocycle, telescoping gives $\sum_{i=0}^{n-1}F(T_{x_i}\omega,z_{i+1}) = f(\omega,x_n) - f(\omega,[nx])$ for any nearest-neighbor path $x_{0,n}$ with $x_0 = [nx]$. Note that $|x_n| \le |[nx]| + n \le n(B+1)$. Therefore, the desired result follows from \eqref{lembor3}. 
\end{proof}

\section{Variational formulas for the tilted free energy}\label{app_varfor}
\corO{We present here two variational formulas for the tilted free energy 
(defined in \eqref{nolimit}) in our one-dimensional nearest-neighbor setting.
These are provided for completeness and are not used elsewhere in the paper,
except that some notation is used also in Appendix \ref{app_isimyok}.}

The variational formulas are
\begin{align}
\Lambda_\beta(\theta) &= \inf_{F\in\mathcal{K}_0}\Pesssup_{\omega}\left\{\beta V(\omega) + \log\left(\frac1{2}e^{\theta + F(\omega,1)} + \frac1{2}e^{- \theta + F(\omega,-1)}\right)\right\}\quad\text{and}\label{Kvarfor}\\
\Lambda_\beta(\theta) &= \sup_{(q,\phi)}\int \left[\beta V(\omega) - I(q(\omega)\,|\,p(\theta))\right]\phi(\omega)d\mathbb{P}(\omega) + \log\cosh(\theta).\label{varyas}
\end{align}
In \eqref{varyas}, $p(\theta) = e^\theta/(e^\theta + e^{-\theta})$,
$$I(q\,|\,p) = q\log(q/p) + (1-q)\log({(1-q)}/{(1-p)}),$$
and the supremum is taken over all $\mathcal{F}$-measurable $q:\Omega\to(0,1)$ and $\phi:\Omega\to(0,\infty)$ such that $\mathbb{E}[\phi(\cdot)] = 1$ and
$$q(T_{-1}\omega)\phi(T_{-1}\omega) + (1 - q(T_1\omega))\phi(T_1\omega) = \phi(\omega)$$
for $\mathbb{P}$-a.e.\ $\omega$. The last equality implies that the probability measure $\phi d\mathbb{P}$ is invariant for the so-called environment Markov chain $(T_{X_i}\omega)_{i\ge0}$ induced by the RWRE with probability $q(T_x\omega)$ of jumping to the right at the point $x$ in the environment $\omega$. 
\corO{These variational formulas follow e.g.\ from \cite[Theorem 2.1]{Yil2009}. See
Remark \ref{furrefvar} for further references.}

When $\Lambda_\beta(\theta) > \beta$, the variational problems in \eqref{Kvarfor} and \eqref{varyas} are solved as follows. Assume without loss of generality that $\theta>0$. (Recall from Proposition \ref{temelsaf}(c) that $\Lambda_\beta(0) = \beta$.) Define $$q_{\beta,\theta}(\omega) = \frac1{2}e^{\beta V(\omega) + \theta + F_{\beta,\theta}(\omega,1) - \Lambda_\beta(\theta)}.$$
Then, \eqref{menzilde} readily implies
$$1 - q_{\beta,\theta}(\omega) = \frac1{2}e^{\beta V(\omega) - \theta + F_{\beta,\theta}(\omega,-1) - \Lambda_\beta(\theta)}\quad\text{and}\quad r_{\beta,\theta}(\omega) := \frac{1-q_{\beta,\theta}(\omega)}{q_{\beta,\theta}(\omega)} = e^{-2\theta - F_{\beta,\theta}(T_{-1}\omega,1) - F_{\beta,\theta}(\omega,1)}.$$
Note that
\begin{equation}\label{nonnest}
0< \left(E_0\left[e^{-\Lambda_\beta(\theta)\tau_1}\one_{\{\tau_1<\infty\}}\right]\right)^2 \le r_{\beta,\theta}(\omega) < e^{2(\beta - \Lambda_\beta(\theta))} < 1
\end{equation}
by \eqref{fboundsrec}. 
We consider the RWRE with probability $q_{\beta,\theta}(T_x\omega)$ of jumping to the right at the point $x$ in the environment $\omega$. It induces a probability measure $\hat P_0^{\beta,\theta,\omega}$ on nearest-neighbor paths starting at $0$. $\hat E_0^{\beta,\theta,\omega}$ denotes expectation under $\hat P_0^{\beta,\theta,\omega}$. With this notation, it is clear from \eqref{nonnest} that $\mathbb{E}\left[\hat E_0^{\beta,\theta,\omega}[\tau_1]\right] < \infty$. Therefore,
$$\psi_{\beta,\theta}(\omega) := \sum_{i=0}^\infty \hat P_0^{\beta,\theta,\omega}(X_i = 0)$$
satisfies $\mathbb{E}[\psi_{\beta,\theta}(\cdot)] < \infty$ (see \cite[Theorem 5.17]{Yil2009}) and we define
$$\phi_{\beta,\theta}(\omega) = \frac{\psi_{\beta,\theta}(\omega)}{\mathbb{E}[\psi_{\beta,\theta}(\cdot)]}.$$

\begin{proposition}
	Assume \eqref{ass_wlog} and \eqref{ass_pan}. If $\theta>0$ and $\Lambda_\beta(\theta) > \beta$, then
	\begin{itemize}
		\item [(a)] the infimum in \eqref{Kvarfor} is attained at $F_{\beta,\theta}$, and
		\item [(b)] the supremum in \eqref{varyas} is attained at $(q_{\beta,\theta},\phi_{\beta,\theta})$.
	\end{itemize}
\end{proposition}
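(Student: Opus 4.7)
My plan for part (a) is essentially a one-line substitution. Proposition \ref{cikmazde} confirms $F_{\beta,\theta}\in\mathcal{K}_0$, and the exact identity \eqref{menzilde} holds for \emph{every} $\omega\in\Omega$, so after plugging $F=F_{\beta,\theta}$ into \eqref{Kvarfor} the expression inside the essential supremum collapses pointwise to the constant $\Lambda_\beta(\theta)$. Hence the infimum is attained and equals $\Lambda_\beta(\theta)$.

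For part (b), my plan has three steps, of which the third is the main obstacle. \emph{Step 1 (admissibility).} Positivity of $q_{\beta,\theta}$ and $1-q_{\beta,\theta}$ is immediate from \eqref{menzilde}, which in fact yields
\[
1 - q_{\beta,\theta}(\omega) = \tfrac{1}{2} e^{\beta V(\omega) - \theta + F_{\beta,\theta}(\omega,-1) - \Lambda_\beta(\theta)}.
\]
The upper bound $r_{\beta,\theta}(\omega)<1$ in \eqref{nonnest} gives uniform ballisticity of the RWRE with jump probabilities $q_{\beta,\theta}(T_x\omega)$, hence $\mathbb{E}[\hat E_0^{\beta,\theta,\omega}[\tau_1]]<\infty$ and $\phi_{\beta,\theta}$ is well-defined, strictly positive and normalized. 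Standard RWRE theory (Kozlov's lemma, as invoked in the proof of Proposition \ref{temelsaf}(b)) then gives that $\phi_{\beta,\theta}\,d\mathbb{P}$ is invariant and ergodic for the environment Markov chain, and in particular satisfies the invariance relation required in the supremum of \eqref{varyas}.

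\emph{Step 2 (algebraic reduction).} Taking logarithms of $q_{\beta,\theta}$, $1-q_{\beta,\theta}$, $p(\theta)=e^\theta/(e^\theta+e^{-\theta})$ and $1-p(\theta)$, the log-ratios $\log(q_{\beta,\theta}/p(\theta))$ and $\log((1-q_{\beta,\theta})/(1-p(\theta)))$ simplify to $\beta V(\omega)+F_{\beta,\theta}(\omega,\pm 1)-\Lambda_\beta(\theta)+\log\cosh\theta$ respectively. A direct computation then gives
\[
\beta V(\omega) - I(q_{\beta,\theta}(\omega)\mid p(\theta)) = \Lambda_\beta(\theta) - \log\cosh\theta - q_{\beta,\theta}(\omega) F_{\beta,\theta}(\omega,1) - (1-q_{\beta,\theta}(\omega)) F_{\beta,\theta}(\omega,-1).
\]

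\emph{Step 3 (the cocycle term integrates to zero).} I must show that integrating the last two terms against $\phi_{\beta,\theta}\,d\mathbb{P}$ yields $0$; this is the main technical point. The plan is a two-pronged ergodic argument. On the one hand, by Step 1 the environment Markov chain started from $\phi_{\beta,\theta}\,d\mathbb{P}$ is stationary and ergodic, so the Birkhoff ergodic theorem gives
\[
\lim_{n\to\infty}\frac{1}{n}\sum_{i=0}^{n-1}F_{\beta,\theta}(T_{X_i}\omega, Z_{i+1}) = \int\bigl[q_{\beta,\theta} F_{\beta,\theta}(\cdot,1) + (1-q_{\beta,\theta}) F_{\beta,\theta}(\cdot,-1)\bigr]\phi_{\beta,\theta}\,d\mathbb{P}
\]
almost surely. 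On the other hand, $F_{\beta,\theta}$ is a bounded centered cocycle by \eqref{fboundsrec} and \eqref{melburn}, so Lemma \ref{cencoclem}, applied along the random nearest-neighbor path $(X_i)$ (whose displacement grows at most linearly in $n$), shows that the same time average tends to $0$ almost surely. Hence the integral vanishes, and integrating the identity of Step 2 against $\phi_{\beta,\theta}\,d\mathbb{P}$ produces exactly $\Lambda_\beta(\theta)-\log\cosh\theta$; adding back $\log\cosh\theta$ as in \eqref{varyas} recovers $\Lambda_\beta(\theta)$, completing the proof.
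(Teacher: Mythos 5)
Your proposal is correct and follows essentially the same route as the paper. Part (a) is indeed the immediate pointwise substitution using \eqref{menzilde}. For part (b), your Step 3 is exactly the paper's key move: equate the Birkhoff time average of $F_{\beta,\theta}(T_{X_i}\omega,Z_{i+1})$ under the stationary environment chain with the cocycle sublinearity coming from Lemma \ref{cencoclem}, concluding that the integral $\int[q_{\beta,\theta}F_{\beta,\theta}(\cdot,1)+(1-q_{\beta,\theta})F_{\beta,\theta}(\cdot,-1)]\phi_{\beta,\theta}\,d\mathbb{P}$ vanishes; your Step 2 algebraic reduction is a mild reorganization of the paper's direct expansion of $I(q_{\beta,\theta}\,|\,p(\theta))$ and lands on the same identity. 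The only difference is presentational: you spell out the admissibility of $(q_{\beta,\theta},\phi_{\beta,\theta})$ and the log-ratio simplification in more detail, whereas the paper delegates admissibility to a citation of \cite[Theorem 5.17]{Yil2009} and writes out the $I$-expansion inline. No gap.
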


\begin{proof}
	(a) This follows immediately from \eqref{menzilde}.
	
	(b) It is easy to show that $$q_{\beta,\theta}(T_{-1}\omega)\phi_{\beta,\theta}(T_{-1}\omega) + (1 - q_{\beta,\theta}(T_1\omega))\phi_{\beta,\theta}(T_1\omega) = \phi_{\beta,\theta}(\omega)$$
	for $\mathbb{P}$-a.e.\ $\omega$ (see \cite[Theorem 5.17]{Yil2009}). By Kozlov's lemma (see \cite{Koz1985}), the probability measure $\phi_{\beta,\theta}d\mathbb{P}$ is ergodic for the RWRE defined by $q_{\beta,\theta}$. Therefore, for $\mathbb{P}$-a.e.\ $\omega$ and $\hat P_0^{\beta,\theta,\omega}$-a.s.,
	\begin{equation}\label{gelmy}
	\int \left[q_{\beta,\theta}(\omega)F_{\beta,\theta}(\omega,1) + (1 - q_{\beta,\theta}(\omega))F_{\beta,\theta}(\omega,-1)\right]\phi_{\beta,\theta}(\omega)d\mathbb{P}(\omega) = \lim_{n\to\infty}\frac1{n}\sum_{i=0}^{n-1}F_{\beta,\theta}(T_{X_i}\omega,Z_{i+1}) = 0.
	\end{equation}
	Here, the first equality holds by the Birkhoff ergodic theorem, and the second equality follows from Lemma \ref{cencoclem}. We use \eqref{gelmy} to deduce that
	\begin{align*}
	&\int \left[\beta V(\omega) - I(q_{\beta,\theta}(\omega)\,|\,p(\theta))\right]\phi_{\beta,\theta}(\omega)d\mathbb{P}(\omega) + \log\cosh(\theta)\\
	&\quad = \int \left[\beta V(\omega) - q_{\beta,\theta}(\omega)\{\beta V(\omega) + F_{\beta,\theta}(\omega,1) - \Lambda_\beta(\theta) + \log\cosh(\theta)\}\right.\\
	&\qquad \qquad - \left.(1 - q_{\beta,\theta}(\omega))\{\beta V(\omega) + F_{\beta,\theta}(\omega,-1) - \Lambda_\beta(\theta) + \log\cosh(\theta)\}\right]\phi_{\beta,\theta}(\omega)d\mathbb{P}(\omega) + \log\cosh(\theta)\\
	&\quad = \Lambda_\beta(\theta) - \int \left[q_{\beta,\theta}(\omega)F_{\beta,\theta}(\omega,1) + (1 - q_{\beta,\theta}(\omega))F_{\beta,\theta}(\omega,-1)\right]\phi_{\beta,\theta}(\omega)d\mathbb{P}(\omega) = \Lambda_\beta(\theta).\qedhere
	\end{align*}
\end{proof}

\begin{remark}\label{correctorname}
	The components of the vector $\left(\frac1{2}e^{\beta V(\omega) + \theta - \Lambda_\beta(\theta)}, \frac1{2}e^{\beta V(\omega) - \theta - \Lambda_\beta(\theta)}\right)$ do not add up to $1$.
	$F_{\beta,\theta}$ is called the corrector precisely because it enables us to modify this vector and obtain the transition kernel $(q_{\beta,\theta}(\omega),1-q_{\beta,\theta}(\omega))$. The sublinearity in Lemma \ref{cencoclem} 
	ensures that this modification does not alter the
	asymptotics in the $\frac1{n}\log$ scale. 
\corO{We refer to  \cite[Section 2, Remark 2]{BerMukOka_preprint} for a 
discussion of the relation between these type of correctors and the classical Kipnis-Varadhan correctors used to construct martingales in the proofs
of invariance principles for Markov processes.}
\end{remark}

\section{Nondifferentiability of the tilted free energy at the endpoints of $\{\theta\in\mathbb{R}:\,\Lambda_\beta(\theta) = \beta\}$}\label{app_isimyok}

Recall the notation from the previous section. When $\theta>0$ and $\lambda = \Lambda_\beta(\theta)>\beta$, the equality in \eqref{ustdest} can be expressed as follows:
\begin{align}
\left.\frac{\partial}{\partial\lambda}\mathbb{E}[F_{\beta,\theta}^\lambda(\cdot,1)]\right|_{\lambda = \Lambda_\beta(\theta)} &= \mathbb{E}\left[E_0\left[\tau_1e^{\beta\sum_{i=0}^{\tau_1-1}V(T_{X_i}\omega) - \Lambda_\beta(\theta)\tau_1}\one_{\{\tau_1<\infty\}}\right]e^{\theta + F_{\beta,\theta}(\omega,1)}\right]\nonumber\\&= \mathbb{E}\left[E_0\left[\tau_1e^{\sum_{i=0}^{\tau_1-1}\left\{\beta V(T_{X_i}\omega) + \theta Z_{i+1} + F_{\beta,\theta}(T_{X_i}\omega,Z_{i+1}) - \Lambda_\beta(\theta)\right\}}\one_{\{\tau_1<\infty\}}\right]\right]\nonumber\\
&=\mathbb{E}\left[\hat E_0^{\beta,\theta,\omega}[\tau_1\one_{\{\tau_1<\infty\}}]\right],\label{sadyeng}
\end{align}
\corO{where the second equality is due to telescoping.}
Define $$S_{\beta,\theta}(\omega) = 1 + \sum_{n=1}^\infty\prod_{j=1}^nr_{\beta,\theta}(T_j\omega).$$
Then,
$$\mathbb{E}[S_{\beta,\theta}(\cdot)] < 1+ \sum_{n=1}^\infty\prod_{j=1}^n e^{2(\beta - \Lambda_\beta(\theta))} = \left(1- e^{2(\beta - \Lambda_\beta(\theta))}\right)^{-1} < \infty$$
by \eqref{nonnest}. Therefore, the law of large numbers holds for this RWRE and the limiting velocity $v_{\beta,\theta}$ satisfies
$$(v_{\beta,\theta})^{-1} = \mathbb{E}\left[\hat E_0^{\beta,\theta,\omega}[\tau_1\one_{\{\tau_1<\infty\}}]\right] = \mathbb{E}[(1 + r_{\beta,\theta}(\cdot))S_{\beta,\theta}(\cdot)]$$
(see \cite[Theorem 4.1]{Ali1999}). Recalling \eqref{sadyeng} and the proof of Proposition \ref{temelsaf}(e), we deduce by the implicit function theorem 
\corO{(together with $d /d\theta\left(\mathbb{E}\left[
F_{\beta,\theta}^{\Lambda_\beta(\theta)}(\cdot,1)\right]\right)=0$ 
due to \eqref{eq-star14} and 
$\partial/\partial \theta (F_{\beta,\theta}^\lambda(\omega,1))=-1$ due to  
\eqref{pregaztap})} that
\begin{equation}\label{derengel}
\frac{d}{d\theta}\Lambda_\beta(\theta) = \left(\left.\frac{\partial}{\partial\lambda}\mathbb{E}[F_{\beta,\theta}^\lambda(\cdot,1)]\right|_{\lambda = \Lambda_\beta(\theta)}\right)^{-1} = v_{\beta,\theta} = \left(\mathbb{E}[(1 + r_{\beta,\theta}(\cdot))S_{\beta,\theta}(\cdot)]\right)^{-1}.
\end{equation}

In order to prove that $\theta\mapsto\Lambda_\beta(\theta)$ is not differentiable at $\pm\theta_b$ defined by $$\theta_b := \sup\{\theta\in\mathbb{R}:\,\Lambda_\beta(\theta)=\beta\} = \inf\{\theta\in\mathbb{R}:\,\theta>0\ \text{and}\ \Lambda_\beta(\theta)>\beta\},$$
it suffices to obtain an upper bound for $\mathbb{E}[(1 + r_{\beta,\theta}(\cdot))S_{\beta,\theta}(\cdot)]$ that is uniform in $\theta>\theta_b$. To this end, observe that 
\begin{align}
  \label{eq-star28}
r_{\beta,\theta}(\omega) &= e^{-2\theta - F_{\beta,\theta}(T_{-1}\omega,1) - F_{\beta,\theta}(\omega,1)} \le e^{-\theta - F_{\beta,\theta}(\omega,1)} = E_0\left[e^{\beta\sum_{i=0}^{\tau_1-1}V(T_{X_i}\omega) - \Lambda_\beta(\theta)\tau_1}\one_{\{\tau_1<\infty\}}\right]\nonumber\\
&= E_0\left[e^{\beta\sum_{i=0}^{\tau_1-1}V(T_{X_i}\omega) - \Lambda_\beta(\theta)\tau_1}\one_{\{\tau_1=1\}}\right] + E_0\left[e^{\beta\sum_{i=0}^{\tau_1-1}V(T_{X_i}\omega) - \Lambda_\beta(\theta)\tau_1}\one_{\{2\le\tau_1<\infty\}}\right]\nonumber\\
&\le \frac1{2}e^{\beta V(\omega) - \Lambda_\beta(\theta)}  + \frac1{2} \le \frac1{2}\left[e^{\beta (V(\omega) - 1)} + 1\right] =: \bar r_\beta(\omega).
\end{align}
Since $V$ takes values in $[0,1]$, we have $\bar r_\beta(\omega)\le 1$ for every $\omega\in\Omega$. Moreover, \eqref{ass_wlog} implies that $\mathbb{E}[\bar r_\beta(\cdot)] < 1$. As the proof of the following warm-up result demonstrates, the advantage of working with $\bar r_\beta(\omega)$ instead of $r_{\beta,\theta}(\omega)$ is that the former does not depend on $\theta$ and it depends on the potential only through $V(\omega)$.

\begin{proposition}\label{nondifiid}
	If $\left(V(T_j\omega)\right)_{j\in\mathbb{Z}}$ are i.i.d.\ under $\mathbb{P}$, then
	$$\mathbb{E}[(1 + r_{\beta,\theta}(\cdot))S_{\beta,\theta}(\cdot)] \le \frac{1 + \mathbb{E}[\bar r_\beta(\cdot)]}{1 - \mathbb{E}[\bar r_\beta(\cdot)]} < \infty$$
	for every $\theta>\theta_b$. Hence, $\theta\mapsto\Lambda_\beta(\theta)$ is not differentiable at $\pm\theta_b$.
\end{proposition}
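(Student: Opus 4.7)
The plan is to combine the pointwise bound $r_{\beta,\theta}(\omega) \le \bar r_\beta(\omega)$ derived in \eqref{eq-star28} with the crucial feature that $\bar r_\beta(\omega)$ depends on $\omega$ only through the single potential value $V(\omega)$ and is independent of $\theta$. Under the i.i.d.\ assumption, this upgrades to the statement that $\left(\bar r_\beta(T_j\omega)\right)_{j\in\mathbb{Z}}$ are i.i.d.\ under $\mathbb{P}$, with common expectation $\mathbb{E}[\bar r_\beta(\cdot)] \in (0,1)$ (the upper bound uses \eqref{ass_wlog}, which guarantees $V$ is nonconstant). This is what decouples the sum defining $S_{\beta,\theta}$, allowing a uniform-in-$\theta$ bound.

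First I would expand
\begin{equation*}
(1+r_{\beta,\theta}(\omega))S_{\beta,\theta}(\omega) = 1 + r_{\beta,\theta}(\omega) + \sum_{n=1}^\infty \prod_{j=1}^n r_{\beta,\theta}(T_j\omega) + \sum_{n=1}^\infty r_{\beta,\theta}(\omega)\prod_{j=1}^n r_{\beta,\theta}(T_j\omega).
\end{equation*}
By nonnegativity and the pointwise bound $r_{\beta,\theta}(T_k\omega) \le \bar r_\beta(T_k\omega)$, each product is dominated termwise by the corresponding product of $\bar r_\beta(T_k\omega)$'s over distinct indices in $\{0,1,\ldots,n\}$. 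Taking expectations and using the i.i.d.\ assumption factors every such expectation into a power of $\mathbb{E}[\bar r_\beta(\cdot)]$. Summing two geometric series in $\mathbb{E}[\bar r_\beta(\cdot)] < 1$ yields
\begin{equation*}
\mathbb{E}[(1+r_{\beta,\theta}(\cdot))S_{\beta,\theta}(\cdot)] \le \frac{1}{1-\mathbb{E}[\bar r_\beta(\cdot)]} + \frac{\mathbb{E}[\bar r_\beta(\cdot)]}{1-\mathbb{E}[\bar r_\beta(\cdot)]} = \frac{1+\mathbb{E}[\bar r_\beta(\cdot)]}{1-\mathbb{E}[\bar r_\beta(\cdot)]},
\end{equation*}
which is finite and, crucially, independent of $\theta$.

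For the nondifferentiability, I would feed this uniform bound into the formula \eqref{derengel} to obtain
\begin{equation*}
\frac{d}{d\theta}\Lambda_\beta(\theta) = v_{\beta,\theta} = \bigl(\mathbb{E}[(1+r_{\beta,\theta}(\cdot))S_{\beta,\theta}(\cdot)]\bigr)^{-1} \ge \frac{1-\mathbb{E}[\bar r_\beta(\cdot)]}{1+\mathbb{E}[\bar r_\beta(\cdot)]} > 0
\end{equation*}
for every $\theta > \theta_b$. On the other hand, by Proposition \ref{temelsaf}(c), $\Lambda_\beta(\theta) \equiv \beta$ on $[-\theta_b,\theta_b]$, so the left derivative at $\theta_b$ is zero. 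Convexity of $\Lambda_\beta(\cdot)$ (Proposition \ref{temelsaf}(a)) implies that its right derivative at $\theta_b$ equals $\lim_{\theta\downarrow\theta_b} \Lambda_\beta'(\theta)$, which by the above is bounded below by a strictly positive constant. Thus the one-sided derivatives at $\theta_b$ disagree, and $\theta\mapsto\Lambda_\beta(\theta)$ fails to be differentiable there; the conclusion at $-\theta_b$ is then immediate from the evenness of $\Lambda_\beta(\cdot)$.

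The only genuinely delicate point is that $r_{\beta,\theta}(T_j\omega)$ itself is \emph{not} independent across $j$, since $F_{\beta,\theta}(T_j\omega,1)$ encodes excursion data of the potential on a half-line to the left of $j$. The main obstacle, then, would have been handling this dependence. The key idea that sidesteps it entirely is the seemingly crude estimate in \eqref{eq-star28} that discards excursions of length $\geq 2$: the resulting dominating quantity $\bar r_\beta(\omega)$ depends on a single coordinate of the environment, so independence is restored for free.
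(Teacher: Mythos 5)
Your proof is correct and follows the same approach as the paper: the key pointwise bound $r_{\beta,\theta}\le\bar r_\beta$ from \eqref{eq-star28}, the observation that $\bar r_\beta$ depends only on the single coordinate $V(\omega)$ (so the i.i.d.\ assumption can be exploited), the geometric-series bound, and then the lower bound on $\Lambda_\beta'$ via \eqref{derengel}. The only minor cosmetic difference is in the last step: the paper argues that differentiability at $\pm\theta_b$ plus monotonicity of the derivative would force $C^1$ via Darboux's theorem, whereas you argue directly that the left and right derivatives of the convex function disagree at $\theta_b$ — the same substance, if anything a cleaner formulation.
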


\begin{proof}
	Since $\bar r_\beta(\omega)$ is a function of $V(\omega)$, the random variables $\left(\bar r_\beta(T_j\omega)\right)_{j\in\mathbb{Z}}$ are i.i.d.\ under $\mathbb{P}$, too. Therefore, \corO{using that
	  $r_{\beta,\theta}(\cdot)\leq \bar r_\beta(\cdot)$ by \eqref{eq-star28},}
	\begin{align*}
	\mathbb{E}[(1 + r_{\beta,\theta}(\cdot))S_{\beta,\theta}(\cdot)] &\le \mathbb{E}\left[(1 + \bar r_\beta(\cdot))\left(1 + \sum_{n=1}^\infty\prod_{j=1}^n\bar r_\beta(T_j\cdot)\right)\right]\\
	&= \mathbb{E}[1 + \bar r_\beta(\cdot)]\sum_{n=0}^\infty\left(\mathbb{E}[\bar r_\beta(\cdot)]\right)^n = \frac{1 + \mathbb{E}[\bar r_\beta(\cdot)]}{1 - \mathbb{E}[\bar r_\beta(\cdot)]} < \infty
	\end{align*}
	whenever $\theta>\theta_b$. We use \eqref{derengel} to deduce that
	\begin{equation}\label{pazardagel}
	\inf\left\{\frac{d}{d\theta}\Lambda_\beta(\theta):\ \theta>\theta_b\right\} \ge \frac{1 - \mathbb{E}[\bar r_\beta(\cdot)]}{1 + \mathbb{E}[\bar r_\beta(\cdot)]} > 0.
	\end{equation}
	
	Recall Proposition \ref{temelsaf}. If the even and convex map $\theta\mapsto\Lambda_\beta(\theta)$ were differentiable at $\pm\theta_b$, it would be continuously differentiable (by the monotonocity of the derivative and an application of Darboux's theorem). However, the latter is ruled out by \eqref{pazardagel} and the fact that the derivative vanishes on the nonempty interior of the closed interval $\{\theta\in\mathbb{R}:\,\Lambda_\beta(\theta)=\beta\}$. This concludes the proof.
\end{proof}

We can relax the i.i.d.\ assumption in Proposition \ref{nondifiid}. To this end, let $$A_h = \{\omega\in\Omega: V(\omega)\le h\} = \{\omega\in\Omega: \bar r_\beta(\omega)\le a(\beta,h)\}$$ for every $h\in(0,1)$, where $$a(\beta,h) := \frac1{2}\left[e^{\beta (h - 1)} + 1\right] < 1.$$
We can use this event to introduce a stationary discrete point process $N = N(\omega) = (N_n(\omega))_{n\ge0}$ with $N_0(\omega) = 0$ and $N_n(\omega) = \sum_{j=1}^n\one_{A_h}(T_j\omega)$.
For every $k\ge0$, define $$R_{h,k} = \min\{n\ge0:\,N_n = k\}\quad\text{and}\quad t_{h,k+1} = R_{h,k+1} - R_{h,k}.$$
In particular,
$$t_{h,1} = \min\{j\ge1:\,V(T_j\omega) \le h\}.$$

\begin{lemma}\label{yenlek}
	$\mathbb{E}[t_{h,1}] = \frac1{2}\mathbb{P}(A_h)\mathbb{E}\left[\left.t_{h,1}(t_{h,1} + 1)\,\right|\,A_h\right]$ and $\mathbb{E}[t_{h,k}] \le \mathbb{P}(A_h)\mathbb{E}\left[\left.t_{h,1}^2\,\right|\,A_h\right]$ for every $k\ge2$.
\end{lemma}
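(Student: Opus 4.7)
The plan is to use a Palm / Kakutani-tower decomposition adapted to the stationary ergodic discrete point process $(\one_{A_h}(T_j\omega))_{j\in\mathbb{Z}}$, whose intensity is $\lambda := \mathbb{P}(A_h) > 0$ by \eqref{ass_pan} with $\ell=1$. Let $\mathbb{P}^0 := \mathbb{P}(\cdot\mid A_h)$ denote the Palm distribution and $\mathbb{E}^0$ the corresponding expectation. Under $\mathbb{P}^0$ one has $\omega\in A_h$, so the successive arrivals are $0=\sigma_0<\sigma_1<\sigma_2<\cdots$ with $\sigma_1=t_{h,1}$; write $\xi_k := \sigma_k-\sigma_{k-1}$ for the Palm inter-arrival times, so that $\xi_1=t_{h,1}$ and $\xi_k=t_{h,k}$ for $k\ge 2$ under $\mathbb{P}^0$.

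The central tool I would establish first is the discrete Palm inversion formula
\[
\mathbb{E}[g(\omega)] \;=\; \lambda\,\mathbb{E}^0\!\left[\sum_{m=0}^{t_{h,1}-1} g(T_m\omega)\right]
\]
for any measurable $g:\Omega\to[0,\infty]$. To prove it, partition $\Omega$ by the backward distance $\sigma^-(\omega) := \min\{j\ge 0:T_{-j}\omega\in A_h\}$, which is $\mathbb{P}$-a.s.\ finite by ergodicity and $\lambda>0$. Shifting the level set $\{\sigma^-=m\}$ forward by $T_m$ identifies it with $A_h\cap\{t_{h,1}>m\}$, so $\mathbb{E}[g(\omega)\one_{\{\sigma^-=m\}}] = \lambda\,\mathbb{E}^0[g(T_m\omega)\one_{\{t_{h,1}>m\}}]$, and summing over $m\ge 0$ yields the formula.

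For the first identity I would apply the inversion formula with $g=t_{h,1}$, observing that under $\mathbb{P}^0$ the next arrival from $T_m\omega$ sits at distance $t_{h,1}-m$ for $0\le m<t_{h,1}$; the arithmetic sum $\sum_{m=0}^{t_{h,1}-1}(t_{h,1}-m) = \tfrac12 t_{h,1}(t_{h,1}+1)$ then delivers the equality. For the second bound, apply the formula with $g=t_{h,k}$, $k\ge 2$: for $0\le m<t_{h,1}$ the $(k-1)$-th and $k$-th arrivals from $T_m\omega$ sit at $\sigma_{k-1}-m$ and $\sigma_k-m$, so $t_{h,k}(T_m\omega) = \xi_k$ is independent of $m$, whence $\mathbb{E}[t_{h,k}] = \lambda\,\mathbb{E}^0[t_{h,1}\xi_k] = \lambda\,\mathbb{E}^0[\xi_1\xi_k]$. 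Since $(\xi_j)_{j\ge 1}$ is stationary under $\mathbb{P}^0$ (a standard consequence of the invariance of $\mathbb{P}^0$ under the induced transformation on $A_h$), $\xi_k$ is equidistributed with $\xi_1$, and Cauchy-Schwarz gives $\mathbb{E}^0[\xi_1\xi_k]\le\sqrt{\mathbb{E}^0[\xi_1^2]\mathbb{E}^0[\xi_k^2]} = \mathbb{E}^0[\xi_1^2] = \mathbb{E}[t_{h,1}^2\mid A_h]$, which is the claimed inequality.

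The main technical ingredient is the Palm inversion formula; once it is in place, both parts reduce to a short identification of $t_{h,k}(T_m\omega)$ on the initial Palm cycle, combined (for $k\ge 2$) with stationarity of the Palm inter-arrival sequence and Cauchy-Schwarz.
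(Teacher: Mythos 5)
Your proof is correct and takes essentially the same approach as the paper: both rest on the Palm (Kac return) inversion formula $\mathbb{E}[g] = \mathbb{P}(A_h)\mathbb{E}[\sum_{m=0}^{t_{h,1}-1}g\circ T_m\mid A_h]$, apply it with $g=t_{h,1}$ using $t_{h,1}\circ T_m = t_{h,1}-m$ on the initial cycle, and with $g=t_{h,k}$ using the constancy $t_{h,k}\circ T_m = t_{h,k}$ there, followed by stationarity of $(t_{h,k})_{k\ge1}$ under $\mathbb{P}(\cdot\mid A_h)$ and Cauchy--Schwarz. The only difference is that the paper cites Daley--Vere-Jones for the inversion formula, whereas you give a short self-contained derivation via the backward hitting time $\sigma^-$ and measure-preservation of $T$.
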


\begin{proof}
	The sequence $(t_{h,k})_{k\ge1}$ is stationary under $\mathbb{P}(\cdot\,|\,A_h)$. Moreover,
	$$\mathbb{E}[g(N(\cdot))] = \mathbb{P}(A_h)\mathbb{E}\left[\left.\sum_{j=0}^{t_{h,1}-1}g(N(T_j\cdot))\,\right|\,A_h\right]$$
	for any nonnegative measurable function $g$ of the point process $N = (N_n)_{n\ge0}$ (see \cite[Theorem 13.3.I]{DalVer2008}). Therefore,
	\begin{align*}
	\mathbb{E}[t_{h,1}] &= \mathbb{P}(A_h)\mathbb{E}\left[\left.\sum_{j=0}^{t_{h,1}-1}(t_{h,1} - j)\,\right|\,A_h\right] = \frac1{2}\mathbb{P}(A_h)\mathbb{E}\left[\left.t_{h,1}(t_{h,1} + 1)\,\right|\,A_h\right]\quad\text{and}\\ 
	\mathbb{E}[t_{h,k}] &= \mathbb{P}(A_h)\mathbb{E}\left[\left.\sum_{j=0}^{t_{h,1}-1}t_{h,k}\,\right|\,A_h\right] = \mathbb{P}(A_h)\mathbb{E}\left[\left.t_{h,1}t_{h,k}\,\right|\,A_h\right]\\
	&\le \mathbb{P}(A_h)\sqrt{\mathbb{E}\left[\left.t_{h,1}^2\,\right|\,A_h\right]\mathbb{E}\left[\left.t_{h,k}^2\,\right|\,A_h\right]} = \mathbb{P}(A_h)\mathbb{E}\left[\left.t_{h,1}^2\,\right|\,A_h\right]
	\end{align*}
	for every $k\ge2$.
\end{proof}

\begin{theorem}\label{nondifgen}
	If $\mathbb{E}[t_{h,1}]<\infty$ for some $h\in(0,1)$, then $\theta\mapsto\Lambda_\beta(\theta)$ is not differentiable at $\pm\theta_b$.
\end{theorem}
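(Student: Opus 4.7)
The plan is to mimic the strategy of Proposition \ref{nondifiid}: produce a uniform upper bound on $\mathbb{E}[(1+r_{\beta,\theta}(\cdot))S_{\beta,\theta}(\cdot)]$ for $\theta>\theta_b$. By \eqref{derengel}, this yields a uniform strictly positive lower bound on $\frac{d}{d\theta}\Lambda_\beta(\theta)$ over $(\theta_b,\infty)$, which is incompatible with differentiability at $\theta_b$, since $\Lambda_\beta(\cdot)$ must be constant on a neighborhood of $0$ by Proposition \ref{temelsaf}(c) and the convexity/evenness would then force continuous differentiability at $\theta_b$. Non-differentiability at $-\theta_b$ then follows from the evenness in Proposition \ref{temelsaf}(a).

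To obtain the required bound without i.i.d.\ assumptions, I would replace the factor-by-factor independence argument used in the proof of Proposition \ref{nondifiid} by the Palm/renewal structure induced by the point process $N=(N_n)_{n\ge 0}$. Starting from the pointwise inequality $r_{\beta,\theta}(\omega)\le\bar r_\beta(\omega)\le 1$ combined with $\bar r_\beta(\omega)\le a(\beta,h)<1$ on $A_h$, one gets
$$\prod_{j=1}^n r_{\beta,\theta}(T_j\omega)\le a(\beta,h)^{N_n(\omega)}.$$
Grouping the indices $n\ge1$ according to the value $k=N_n(\omega)$, one finds that the level set $\{n\ge 1:N_n(\omega)=k\}$ has $t_{h,1}(\omega)-1$ elements when $k=0$ and exactly $t_{h,k+1}(\omega)$ elements when $k\ge 1$, so that
$$S_{\beta,\theta}(\omega)\le t_{h,1}(\omega)+\sum_{k=1}^\infty a(\beta,h)^k\, t_{h,k+1}(\omega).$$

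Taking expectations and applying Lemma \ref{yenlek} gives $\mathbb{E}[t_{h,k+1}]\le\mathbb{P}(A_h)\mathbb{E}[t_{h,1}^2\,|\,A_h]$ for every $k\ge1$, a quantity independent of both $k$ and $\theta$, whence
$$\mathbb{E}[S_{\beta,\theta}(\cdot)]\le\mathbb{E}[t_{h,1}]+\mathbb{P}(A_h)\mathbb{E}[t_{h,1}^2\,|\,A_h]\cdot\frac{a(\beta,h)}{1-a(\beta,h)}.$$
Since $1+r_{\beta,\theta}\le 2$, the same bound (up to a factor of $2$) holds for $\mathbb{E}[(1+r_{\beta,\theta})S_{\beta,\theta}]$, and the argument concludes verbatim as in Proposition \ref{nondifiid}.

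The main obstacle is verifying the finiteness of $\mathbb{E}[t_{h,1}^2\,|\,A_h]$ from the sole hypothesis $\mathbb{E}[t_{h,1}]<\infty$; without it, the geometric series above would not be summable. This is delivered for free by the first identity in Lemma \ref{yenlek}, which yields $\mathbb{E}[t_{h,1}(t_{h,1}+1)\,|\,A_h]=2\mathbb{E}[t_{h,1}]/\mathbb{P}(A_h)$ and hence $\mathbb{E}[t_{h,1}^2\,|\,A_h]<\infty$. Everything else, in particular the bookkeeping of the level sets of $N_n$ in terms of the renewal times $R_{h,k}$, is a straightforward counting step that uses only the stationarity already built into the point process $N$.
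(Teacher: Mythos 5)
Your proof is correct and follows essentially the same route as the paper: both reduce to the pointwise bound $\prod_{j=1}^n r_{\beta,\theta}(T_j\omega)\le a(\beta,h)^{N_n(\omega)}$, decompose the resulting sum according to the renewal times $R_{h,k}$ to obtain $\sum_{k\ge 0}a(\beta,h)^k\,\mathbb{E}[t_{h,k+1}]$, and control this via Lemma \ref{yenlek}, which delivers $\mathbb{E}[t_{h,1}^2\,|\,A_h]<\infty$ from $\mathbb{E}[t_{h,1}]<\infty$. The only cosmetic difference is that you perform the level-set grouping pathwise before taking expectations, whereas the paper does the equivalent Fubini rearrangement at the level of $\sum_n\mathbb{E}[a(\beta,h)^{N_n}]$.
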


\begin{proof}
	If $\mathbb{E}[t_{h,1}]<\infty$ for some $h\in(0,1)$, then $$\mathbb{E}[t_{h,k}] \le \mathbb{P}(A_h)\mathbb{E}\left[\left.t_{h,1}^2\,\right|\,A_h\right] \le 2\mathbb{E}[t_{h,1}]<\infty$$ for every $k\ge2$ by Lemma \ref{yenlek}. Therefore,
	\begin{align*}
	\frac1{2}\mathbb{E}[(1 + r_{\beta,\theta}(\cdot))S_{\beta,\theta}(\cdot)] &\le \mathbb{E}\left[\left(1 + \sum_{n=1}^\infty\prod_{j=1}^n\bar r_\beta(T_j\cdot)\right)\right] = 1 + \sum_{n=1}^\infty\mathbb{E}\left[\prod_{j=1}^n\bar r_\beta(T_j\cdot)\right]\\
	&\le \sum_{n=0}^\infty\mathbb{E}\left[a(\beta,h)^{N_n}\right] = \sum_{n=0}^\infty\sum_{k=0}^n a(\beta,h)^k\mathbb{P}(N_n = k) = \sum_{k=0}^\infty a(\beta,h)^k \sum_{n=k}^\infty \mathbb{P}(N_n = k)\\
	&= \sum_{k=0}^\infty a(\beta,h)^k \sum_{n=k}^\infty \mathbb{E}\left[\one_{\{R_{h,k}\le n < R_{h,k+1}\}}\right] = \sum_{k=0}^\infty a(\beta,h)^k \mathbb{E}\left[\sum_{n=k}^\infty\one_{\{R_{h,k}\le n < R_{h,k+1}\}}\right]\\
	&= \sum_{k=0}^\infty a(\beta,h)^k \mathbb{E}\left[t_{h,k+1}\right] \le \frac{2\mathbb{E}[t_{h,1}]}{1-a(\beta,h)} < \infty
	\end{align*}
	whenever $\theta>\theta_b$, \corO{where \eqref{eq-star28} was
      used in the first inequality}. The rest of the proof is identical to that of Proposition \ref{nondifiid}.
\end{proof}

\section{Large deviation estimates for the number of left excursions of RWs}\label{app_simple}

Let $(X_i)_{i\ge0}$ denote SSRW on $\mathbb{Z}$. Similar to $\tau_y = \inf\{i\ge 0:\,X_i = y\}$ with $y\in\mathbb{Z}$, define
$$\tau_{y-1,y} = \inf\{i\ge1:\,X_{i-1} = y-1, X_i = y\}.$$ 

\begin{lemma}
	For every $\lambda \ge 0$,
	\begin{align}
	&E_0[e^{-\lambda\tau_1}\one_{\{\tau_1<\infty\}}] = e^\lambda - \sqrt{e^{2\lambda} - 1}\qquad\text{and}\label{duzel1}\\
	&E_0[e^{-\lambda\tau_{-1,0}}\one_{\{\tau_{-1,0}<\infty\}}] = \frac{1 - \sqrt{1 - e^{-2\lambda}}}{1 + \sqrt{1 - e^{-2\lambda}}}.\label{duzel2}
	\end{align}
\end{lemma}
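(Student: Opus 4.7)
For \eqref{duzel1}, the strategy is the classical first-step analysis for the hitting time generating function of SSRW. Set $\phi(\lambda) = E_0[e^{-\lambda\tau_1}\one_{\{\tau_1<\infty\}}]$. Condition on $X_1$. On $\{X_1=1\}$, $\tau_1 = 1$. On $\{X_1=-1\}$, apply the strong Markov property at the hitting time of $0$ starting from $-1$: by spatial translation this hitting time has the same law as $\tau_1$ under $P_0$, and afterwards the remaining time to reach $1$ is an independent copy of $\tau_1$ under $P_0$. This yields
\begin{equation*}
\phi(\lambda) \;=\; \tfrac12 e^{-\lambda} + \tfrac12 e^{-\lambda}\phi(\lambda)^2,
\end{equation*}
i.e.\ the quadratic $\phi^2 - 2e^\lambda\phi + 1 = 0$, whose roots are $e^\lambda\pm\sqrt{e^{2\lambda}-1}$. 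Since $|\phi(\lambda)|\le 1$ for $\lambda\ge 0$, only the minus sign is admissible, giving $\phi(\lambda) = e^\lambda - \sqrt{e^{2\lambda}-1}$.

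For \eqref{duzel2}, the plan is to reduce to \eqref{duzel1} by a two-stage strong Markov decomposition. Starting at $0$, the time $\tau_{-1,0}$ of the first $-1\to 0$ transition equals $\tau_{-1}+T$, where $T$ is the time, counted from first hitting $-1$, of the first subsequent step from $-1$ to $0$. Under $P_0$, $\tau_{-1}$ has the same law as $\tau_1$ by reflection symmetry, so its generating function is $\phi(\lambda)$. Let $\rho(\lambda) = E_{-1}[e^{-\lambda T}\one_{\{T<\infty\}}]$. First-step analysis at $-1$ gives $T=1$ if the next step is $0$ (probability $1/2$), while on the event that the next step is $-2$ we use the strong Markov property at the return to $-1$: the return time from $-2$ is distributed as $\tau_1$ under $P_0$ (generating function $\phi$), after which $T$ restarts. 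This produces
\begin{equation*}
\rho(\lambda) \;=\; \tfrac12 e^{-\lambda} + \tfrac12 e^{-\lambda}\phi(\lambda)\,\rho(\lambda), \qquad \rho(\lambda) \;=\; \frac{e^{-\lambda}}{2 - e^{-\lambda}\phi(\lambda)}.
\end{equation*}

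Finally, by the strong Markov property at $\tau_{-1}$, $E_0[e^{-\lambda\tau_{-1,0}}\one] = \phi(\lambda)\rho(\lambda)$, and the desired closed form follows from the two algebraic identities
\begin{equation*}
e^{-\lambda}\phi(\lambda) \;=\; 1 - \sqrt{1-e^{-2\lambda}}, \qquad 2 - e^{-\lambda}\phi(\lambda) \;=\; 1 + \sqrt{1-e^{-2\lambda}},
\end{equation*}
which are immediate from \eqref{duzel1}. Substituting yields the right-hand side of \eqref{duzel2}. The only mildly subtle point in the argument is verifying that $\tau_{-1,0}$ indeed decomposes as $\tau_{-1}+T$ with $T$ independent of $\mathcal{F}_{\tau_{-1}}$ and distributed as under $P_{-1}$; this is a direct application of the strong Markov property at $\tau_{-1}$, since being at $-1$ at time $\tau_{-1}$ triggers the subsequent $-1\to 0$ search. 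No step requires more than routine computation.
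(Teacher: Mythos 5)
Your proof is correct and follows essentially the same approach as the paper: first-step analysis for \eqref{duzel1} (your quadratic $\phi^2-2e^\lambda\phi+1=0$ is exactly the characteristic equation of the paper's difference equation for $v_\lambda(x)=E_x[e^{-\lambda\tau_0}\one]$), and strong Markov decomposition at $\tau_{-1}$ for \eqref{duzel2}. The only inefficiency is that your $T$ is precisely the hitting time of $0$ from $-1$ (since from $-1$ the walk can only reach $0$ via a $-1\to0$ step), so $\rho(\lambda)=E_{-1}[e^{-\lambda\tau_0}\one]=\phi(\lambda)$ directly and \eqref{duzel2} is just $\phi(\lambda)^2$ — your extra first-step computation of $\rho$ re-derives this, correctly but needlessly.
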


\begin{proof}
	The desired equalities clearly hold when $\lambda=0$. For $\lambda>0$ and $x\in\mathbb{Z}$, let $v_\lambda(x) = E_x[e^{-\lambda\tau_0}\one_{\{\tau_0<\infty\}}]$. Then,
	\begin{equation}\label{rakkasmey}
	v_\lambda(x) = \frac1{2}e^{-\lambda}(v_\lambda(x-1) + v_\lambda(x+1))\quad\text{for $x\ne 0$,}
	\end{equation}
	$v_\lambda(0) = 1$, and $\lim_{|x|\to\infty}v_\lambda(x) = 0$. We substitute $v_\lambda(x) = e^{-a|x|}$ into \eqref{rakkasmey} and find after an elementary computation that $e^{\pm a} = e^\lambda \pm \sqrt{e^{2\lambda} - 1}$.
	Consequently, \eqref{duzel1} follows and
	$$E_0[e^{-\lambda\tau_{-1,0}}\one_{\{\tau_{-1,0}<\infty\}}] = E_0[e^{-\lambda\tau_{-1}}\one_{\{\tau_{-1}<\infty\}}]\,E_{-1}[e^{-\lambda\tau_{0}}\one_{\{\tau_{0}<\infty\}}] = e^{-2a} = \frac{1 - \sqrt{1 - e^{-2\lambda}}}{1 + \sqrt{1 - e^{-2\lambda}}}.\qedhere$$
\end{proof}

\begin{proof}[Proof of Proposition \ref{countexcursion}]
	Recall from \eqref{elemtere} that $\mathcal{L}_0(X_{0,n})$ counts the number of complete left excursions of SSRW starting from the origin,
	\corOO{up to time $n$}. For every $\xi\ge0$, define
	$$I(\xi) = - \lim_{n\to\infty}\frac1{n}\log P_0(\mathcal{L}_0(X_{0,n}) \ge n \xi).$$
	It is clear that $I(0) = 0$ and $I(\xi) = \infty$ if $\xi>1/2$. For $\xi\in(0,1/2]$,
	\begin{equation}
	\begin{aligned}\label{melmer}
	I(\xi) 
	&\;= - \lim_{n\to\infty}\frac1{n}\log P_0\left(\sum_{i=1}^{n\xi} \tau_{-1,0}^i \le n\right) = - \xi\lim_{n\to\infty}\frac1{n\xi}\log P_0\left(\frac1{n\xi}\sum_{i=1}^{n\xi} \tau_{-1,0}^i \le \frac1{\xi}\right)\\
	&\;= \xi\inf_{0\le a\le \frac1{\xi}}\sup_{\lambda\in\mathbb{R}}\left\{-\lambda a  - \log E_0[e^{-\lambda\tau_{-1,0}}\one_{\{\tau_{-1,0}<\infty\}}]\right\}\\
	&\;= \xi\inf_{0\le a\le \frac1{\xi}}\sup_{\lambda\ge 0}\left\{-\lambda a  - \log E_0[e^{-\lambda\tau_{-1,0}}\one_{\{\tau_{-1,0}<\infty\}}]\right\}\\
	&\;= -\inf_{\lambda \ge 0}\left\{\lambda + \xi \log E_0[e^{-\lambda\tau_{-1,0}}\one_{\{\tau_{-1,0}<\infty\}}]\right\}
	\end{aligned}
	\end{equation}
	by Cram\'er's theorem (see \cite[Theorem 2.2.3]{DemZei2010}), where $\tau_{-1,0}^i$ are independent copies of $\tau_{-1,0}$. We substitute the expression on the right-hand side of \eqref{duzel2} into the last expression in \eqref{melmer}, check that the infimum there is attained when $\sqrt{1 - e^{-2\lambda}} = 2\xi$, and obtain the following formula (with the convention $0\log0 = 0$):
	$$I(\xi) = \left(\frac{1-2\xi}{2}\right)\log(1 - 2\xi) + \left(\frac{1+2\xi}{2}\right)\log(1 + 2\xi) > 0.$$
	Note that $I$ is continuous and strictly increasing on $[0,1/2]$. Set $I(\xi) = \infty$ for every $\xi<0$. It follows that $\left(P_0(\frac1{n}\mathcal{L}_0(X_{0,n}) \in \cdot\,)\right)_{n\ge1}$ satisfies the large deviation principle with rate function $I:\mathbb{R}\to[0,\infty]$. 
	Finally,
	$$J(2c) = \lim_{n\to\infty}\frac1{n}\log E_0\left[e^{2c\mathcal{L}_0(X_{0,n})}\right] = \sup_{0\le\xi\le1/2}\{2c\xi - I(\xi)\} = \log\cosh(c)$$
	by Varadhan's integral lemma (see \cite[Theorem 4.3.1]{DemZei2010}) and a routine computation.
	
	Recall from Section \ref{cinsisimhay} that $(Y_k^\ell)_{k\ge0}$ is a reflected RW on $[-\ell,\ell-1]$ subject to geometric holding times (with rate $1/2$) at $-\ell$ and $\ell-1$. Its transition probabilities are given in \eqref{tantana}. For $y\in\mathbb{Z}$, let
	$$\tilde\tau_y^\ell = \inf\{k\ge 0:\,Y_k^\ell = y\}\quad\text{and}\quad\tilde\tau_{y-1,y}^\ell = \inf\{k\ge1:\,Y_{k-1}^\ell = y-1, Y_k^\ell = y\}.$$
	For $\lambda > 0$, let $\tilde v_\lambda^\ell(x) = E_x[e^{-\lambda\tilde\tau_0^\ell}\one_{\{\tilde\tau_{0}^\ell<\infty\}}]$. Then, similar to \eqref{rakkasmey},
	$$\tilde v_\lambda^\ell(x) = \frac1{2}e^{-\lambda}(\tilde v_\lambda^\ell(x-1) + \tilde v_\lambda^\ell(x+1))\quad\text{for $x\in[-\ell-1,-1]\cup[1,\ell-2]$,}$$
	$\tilde v_\lambda^\ell(0) = 1$, and $\tilde v_\lambda^\ell(x) \le e^{-\lambda|x|}$ for $x\in[-\ell,\ell-1]$.
	By the maximum principle, $\tilde v_\lambda^\ell(x) - v_\lambda(x)\to0$ as $\ell\to\infty$.
	Therefore,
	\begin{align*}
	\lim_{\ell\to\infty}E_0[e^{-\lambda\tilde\tau_{-1,0}^\ell}\one_{\{\tilde\tau_{-1,0}^\ell<\infty\}}] &= \lim_{\ell\to\infty}\left(E_0[e^{-\lambda\tilde\tau_{-1}^\ell}\one_{\{\tilde\tau_{-1}^\ell<\infty\}}]\,E_{-1}[e^{-\lambda\tilde\tau_{0}^\ell}\one_{\{\tilde\tau_{0}^\ell<\infty\}}]\right)\\
	&= E_0[e^{-\lambda\tau_{-1}}\one_{\{\tau_{-1}<\infty\}}]\,E_{-1}[e^{-\lambda\tau_{0}}\one_{\{\tau_{0}<\infty\}}]\\
	&= E_0[e^{-\lambda\tau_{-1,0}}\one_{\{\tau_{-1,0}<\infty\}}].
	\end{align*}
	We record this as follows: for every $\lambda>0$,
	$$\varphi_\ell(\lambda) := \log E_0[e^{-\lambda\tilde\tau_{-1,0}^\ell}\one_{\{\tilde\tau_{-1,0}^\ell<\infty\}}]\quad\text{and}\quad\varphi(\lambda) := \log E_0[e^{-\lambda\tau_{-1,0}}\one_{\{\tau_{-1,0}<\infty\}}]$$
	satisfy
	\begin{equation}\label{elher}
	\lim_{\ell\to\infty}\varphi_\ell(\lambda) = \varphi(\lambda).
	\end{equation}

	For every $\ell,m\in\mathbb{N}$ and $\lambda>0$,	
	\begin{align*}
	E_0\left[e^{-\varphi_\ell(\lambda)\mathcal{L}_0(Y_{0,m}^\ell)}\right] &= \sum_{k=0}^{[m/2]}e^{-k\varphi_\ell(\lambda)}P_0\left(\mathcal{L}_0(Y_{0,m}^\ell) = k\right)\\
	&= \sum_{k=0}^{[m/2]}e^{-k\varphi_\ell(\lambda)}P_0\left(\sum_{i=1}^k \tilde\tau_{-1,0}^{\ell,i} \le m < \sum_{i=1}^{k+1} \tilde\tau_{-1,0}^{\ell,i}\right)\\
	&\le \sum_{k=0}^{[m/2]}e^{-k\varphi_\ell(\lambda) + m\lambda}E_0\left[e^{-\lambda\sum_{i=1}^k \tilde\tau_{-1,0}^{\ell,i}}\one_{\left\{\sum_{i=1}^k \tilde\tau_{-1,0}^{\ell,i} \le m < \sum_{i=1}^{k+1} \tilde\tau_{-1,0}^{\ell,i}\right\}}\right]\\
	&\le \sum_{k=0}^{[m/2]}e^{-k\varphi_\ell(\lambda) + m\lambda}E_0\left[e^{-\lambda\sum_{i=1}^k \tilde\tau_{-1,0}^{\ell,i}}\one_{\left\{\sum_{i=1}^k \tilde\tau_{-1,0}^{\ell,i} < \infty\right\}}\right]\\
	&= \sum_{k=0}^{[m/2]}e^{-k\varphi_\ell(\lambda) + m\lambda + k\varphi_\ell(\lambda)} = (1 + [m/2])e^{m\lambda},
	\end{align*}
	where $\tilde\tau_{-1,0}^{\ell,i}$ are independent copies of $\tilde\tau_{-1,0}^\ell$. Therefore,
	$$J_\ell(-\varphi_\ell(\lambda)) = \limsup_{m\to\infty}\frac1{m}\log E_0\left[e^{-\varphi_\ell(\lambda)\mathcal{L}_0(Y_{0,m}^\ell)}\right] \le \lambda.$$
	It follows from \eqref{duzel2} that $\varphi(\log\cosh(c)) = -2c$. Since $J_\ell$ is clearly Lipschitz continuous with Lipschitz constant $1/2$,
	\begin{align*}
	J_\ell(2c) &\le J_\ell(-\varphi_\ell(\log\cosh(c))) + \frac1{2}|\varphi_\ell(\log\cosh(c)) - \varphi(\log\cosh(c))|\\
	&\le \log\cosh(c) + \frac1{2}|\varphi_\ell(\log\cosh(c)) - \varphi(\log\cosh(c))|.
	\end{align*}
	Recalling \eqref{elher}, we deduce that
	\begin{equation}\label{guzolbir}
	\limsup_{\ell\to\infty}J_\ell(2c) \le \log\cosh(c).
	\end{equation}
	On the other hand, it is clear from the definition of $(Y_k^\ell)_{k\ge0} = (X_{\sigma_k})_{k\ge0}$ that $\mathcal{L}_0(Y_{0,m}^\ell) \ge \mathcal{L}_0(X_{0,m})$ for every $m\ge1$ and every realization of the SSRW path $X_{0,m}$. Therefore,
	\begin{equation}\label{guzoliki}
	J_\ell(2c) = \limsup_{m\to\infty}\frac1{m}\log E_0\left[e^{2c\mathcal{L}_0(Y_{0,m}^\ell)}\right] \ge \lim_{m\to\infty}\frac1{m}\log E_0\left[e^{2c\mathcal{L}_0(X_{0,m})}\right] = J(2c) = \log\cosh(c)
	\end{equation}
	for every $\ell\in\mathbb{N}$. Combining \eqref{guzolbir} and \eqref{guzoliki} concludes the proof.
\end{proof}

\bibliographystyle{abbrv}
\bibliography{control_references}

\begin{thebibliography}{10}

\bibitem{Ali1999}
S.~Alili.
\newblock Asymptotic behaviour for random walks in random environments.
\newblock {\em J. Appl. Probab.}, 36(2):334--349, 1999.

\bibitem{ArmCar_preprint}
S.~Armstrong and P.~Cardaliaguet.
\newblock Stochastic homogenization of quasilinear {H}amilton-{J}acobi
  equations and geometric motions.
\newblock To appear in {\it J.\ Eur.\ Math.\ Soc.}, available at {\tt
  arXiv:1504.02045}.

\bibitem{ArmTraYu2015}
S.~N. Armstrong, H.~V. Tran, and Y.~Yu.
\newblock Stochastic homogenization of a nonconvex {H}amilton-{J}acobi
  equation.
\newblock {\em Calc. Var. Partial Differential Equations}, 54(2):1507--1524,
  2015.

\bibitem{ArmTraYu2016}
S.~N. Armstrong, H.~V. Tran, and Y.~Yu.
\newblock Stochastic homogenization of nonconvex {H}amilton-{J}acobi equations
  in one space dimension.
\newblock {\em J. Differential Equations}, 261(5):2702--2737, 2016.

\bibitem{Art1980}
Z.~Artstein.
\newblock Discrete and continuous bang-bang and facial spaces or: look for the
  extreme points.
\newblock {\em SIAM Rev.}, 22(2):172--185, 1980.

\bibitem{BerMukOka_preprint}
N.~Berger, C.~Mukherjee, and K.~Okamura.
\newblock Quenched large deviations for simple random walks on percolation
  clusters including long-range correlations.
\newblock Preprint, available at {\tt arXiv:1612.01393}.

\bibitem{BerShr1978}
D.~P. Bertsekas and S.~E. Shreve.
\newblock {\em Stochastic optimal control}, volume 139 of {\em Mathematics in
  Science and Engineering}.
\newblock Academic Press, Inc. [Harcourt Brace Jovanovich, Publishers], New
  York-London, 1978.
\newblock The discrete time case.

\bibitem{DalVer2008}
D.~J. Daley and D.~Vere-Jones.
\newblock {\em An introduction to the theory of point processes. {V}ol. {II}}.
\newblock Probability and its Applications (New York). Springer, New York,
  second edition, 2008.
\newblock General theory and structure.

\bibitem{DavKos_preprint}
A.~Davini and E.~Kosygina.
\newblock Homogenization of viscous and non-viscous {HJ} equations: a remark
  and an application.
\newblock Preprint, available at {\tt arXiv:1608.01893}.

\bibitem{DemZei2010}
A.~Dembo and O.~Zeitouni.
\newblock {\em Large deviations techniques and applications}, volume~38 of {\em
  Stochastic Modelling and Applied Probability}.
\newblock Springer-Verlag, Berlin, 2010.
\newblock Corrected reprint of the second (1998) edition.

\bibitem{Eva1989}
L.~C. Evans.
\newblock The perturbed test function method for viscosity solutions of
  nonlinear {PDE}.
\newblock {\em Proc. Roy. Soc. Edinburgh Sect. A}, 111(3-4):359--375, 1989.

\bibitem{Feh_preprint}
B.~J. Fehrman.
\newblock A partial homogenization result for nonconvex viscous
  {H}amilton-{J}acobi equations.
\newblock Preprint, available at {\tt arXiv:1402.5191}.

\bibitem{Flu2007}
M.~Flury.
\newblock Large deviations and phase transition for random walks in random
  nonnegative potentials.
\newblock {\em Stochastic Process. Appl.}, 117(5):596--612, 2007.

\bibitem{Gao_preprint}
H.~Gao.
\newblock Random homogenization of coercive {H}amilton-{J}acobi equations in
  1d.
\newblock To appear in {\it Calc.\ Var.\ Partial Differential Equations},
  available at {\tt arXiv:1507.07048}.

\bibitem{GriSti2001}
G.~R. Grimmett and D.~R. Stirzaker.
\newblock {\em Probability and random processes}.
\newblock Oxford University Press, New York, third edition, 2001.

\bibitem{Kom1988}
H.~Komiya.
\newblock Elementary proof for {S}ion's minimax theorem.
\newblock {\em Kodai Math. J.}, 11(1):5--7, 1988.

\bibitem{Kos2007}
E.~Kosygina.
\newblock Homogenization of stochastic {H}amilton-{J}acobi equations: brief
  review of methods and applications.
\newblock In {\em Stochastic analysis and partial differential equations},
  volume 429 of {\em Contemp. Math.}, pages 189--204. Amer. Math. Soc.,
  Providence, RI, 2007.

\bibitem{KosRezVar2006}
E.~Kosygina, F.~Rezakhanlou, and S.~R.~S. Varadhan.
\newblock Stochastic homogenization of {H}amilton-{J}acobi-{B}ellman equations.
\newblock {\em Comm. Pure Appl. Math.}, 59(10):1489--1521, 2006.

\bibitem{Koz1985}
S.~M. Kozlov.
\newblock The averaging method and walks in inhomogeneous environments.
\newblock {\em Uspekhi Mat. Nauk}, 40(2(242)):61--120, 238, 1985.

\bibitem{Lig1985}
T.~M. Liggett.
\newblock An improved subadditive ergodic theorem.
\newblock {\em Ann. Probab.}, 13(4):1279--1285, 1985.

\bibitem{LioSou2005}
P.-L. Lions and P.~E. Souganidis.
\newblock Homogenization of ``viscous'' {H}amilton-{J}acobi equations in
  stationary ergodic media.
\newblock {\em Comm. Partial Differential Equations}, 30(1-3):335--375, 2005.

\bibitem{RasSep2014}
F.~Rassoul-Agha and T.~Sepp\"al\"ainen.
\newblock Quenched point-to-point free energy for random walks in random
  potentials.
\newblock {\em Probab. Theory Related Fields}, 158(3-4):711--750, 2014.

\bibitem{RasSepYil2013}
F.~Rassoul-Agha, T.~Sepp\"al\"ainen, and A.~Yilmaz.
\newblock Quenched free energy and large deviations for random walks in random
  potentials.
\newblock {\em Comm. Pure Appl. Math.}, 66(2):202--244, 2013.

\bibitem{RasSepYil2017}
F.~Rassoul-Agha, T.~Sepp\"al\"ainen, and A.~Yilmaz.
\newblock Variational formulas and disorder regimes of random walks in random
  potentials.
\newblock {\em Bernoulli}, 23(1):405--431, 2017.

\bibitem{Ros2006}
J.~M. Rosenbluth.
\newblock {\em Quenched large deviation for multidimensional random walk in
  random environment: {A} variational formula}.
\newblock ProQuest LLC, Ann Arbor, MI, 2006.
\newblock Thesis (Ph.D.)--New York University.

\bibitem{Spi1976}
F.~Spitzer.
\newblock {\em Principles of random walk}.
\newblock Springer-Verlag, New York-Heidelberg, second edition, 1976.
\newblock Graduate Texts in Mathematics, Vol. 34.

\bibitem{Szn1994}
A.-S. Sznitman.
\newblock Shape theorem, {L}yapounov exponents, and large deviations for
  {B}rownian motion in a {P}oissonian potential.
\newblock {\em Comm. Pure Appl. Math.}, 47(12):1655--1688, 1994.

\bibitem{Var2003}
S.~R.~S. Varadhan.
\newblock Large deviations for random walks in a random environment.
\newblock {\em Comm. Pure Appl. Math.}, 56(8):1222--1245, 2003.
\newblock Dedicated to the memory of J\"urgen K. Moser.

\bibitem{Yil2009}
A.~Yilmaz.
\newblock Quenched large deviations for random walk in a random environment.
\newblock {\em Comm. Pure Appl. Math.}, 62(8):1033--1075, 2009.

\bibitem{Zer1998}
M.~P.~W. Zerner.
\newblock Directional decay of the {G}reen's function for a random nonnegative
  potential on {${\bf Z}^d$}.
\newblock {\em Ann. Appl. Probab.}, 8(1):246--280, 1998.

\bibitem{Zil_preprint}
B.~Ziliotto.
\newblock Stochastic homogenization of nonconvex {H}amilton-{J}acobi equations:
  a counterexample.
\newblock To appear in {\it Comm.\ Pure Appl.\ Math.}, available at {\tt
  arXiv:1512.06375}.

\end{thebibliography}

\end{document}